\documentclass[envcountsame,envcountsect]{svmult}
\usepackage{amsmath}
\usepackage{amscd,amssymb}
\usepackage[all]{xy}

\spnewtheorem{rem}[theorem]{Remark}{\bfseries}{\upshape}
\spnewtheorem{exm}[theorem]{Example}{\bfseries}{\upshape}
\spnewtheorem{quest}[theorem]{Question}{\bfseries}{\upshape}
\spnewtheorem{prb}[theorem]{Problem}{\bfseries}{\upshape}

\newcommand{\Z}{\mathbb{Z}}
\newcommand{\Q}{\mathbb{Q}}
\newcommand{\R}{\mathbb{R}}
\newcommand{\C}{\mathbb{C}}

\newcommand{\PP}{\mathbb{P}}
\newcommand{\CP}{\mathbb{CP}}

\newcommand{\RR}{{\mathcal R}}
\newcommand{\VV}{\mathcal{V}}

\newcommand{\wR}{\widetilde{\RR}}
\newcommand{\wV}{\widetilde{\VV}}

\newcommand{\A}{{\mathcal{A}}}
\newcommand{\B}{{\mathcal{B}}}

\newcommand{\m}{{\mathfrak{m}}}

\DeclareMathOperator{\init}{in}
\DeclareMathOperator{\rank}{rank}

\DeclareMathOperator{\im}{im}
\DeclareMathOperator{\coker}{coker}
\DeclareMathOperator{\codim}{codim}

\DeclareMathOperator{\ab}{{ab}}
\DeclareMathOperator{\Sym}{Sym}

\DeclareMathOperator{\Hom}{{Hom}}

\DeclareMathOperator{\ev}{ev}

\DeclareMathOperator{\corank}{corank}

\DeclareMathOperator{\TC}{TC}
\DeclareMathOperator{\supp}{supp}

\DeclareMathOperator{\Char}{Char}
\DeclareMathOperator{\rk}{rk}

\DeclareMathOperator{\Conf}{Conf}

\newcommand{\cdga}{\textsc{cdga}}
\newcommand{\surj}{\twoheadrightarrow}
\newcommand{\inj}{\hookrightarrow}

\newcommand{\isom}{\xrightarrow{\,\simeq\,}}

\newcommand{\abs}[1]{\left| #1 \right|}

\def\set#1{{\left\{#1\right\}}}
\newcommand{\apl}{A_{\rm PL}}
\def\dot{\mathchar"013A}  
\newcommand{\hdot}{{\raise1pt\hbox to0.35em{\Large $\dot$\!}}} 
\newcommand{\hsp}{\qquad\qquad\qquad}

\begin{document}

\title*{Around the tangent cone theorem}
\titlerunning{Around the tangent cone theorem}

\author{Alexander~I.~Suciu}
\institute{Alexander~I.~Suciu \at 
Department of Mathematics,
Northeastern University,
Boston, MA 02115, USA\\ 
\email{a.suciu@neu.edu}\\
Supported in part by National Security 
Agency grant H98230-13-1-0225.
}

\setcounter{minitocdepth}{1}
\maketitle
\dominitoc

\abstract{A cornerstone of the theory of cohomology 
jump loci is the Tangent Cone theorem, which relates the 
behavior around the origin of the characteristic and resonance 
varieties of a space.  We revisit this theorem, in both 
the algebraic setting provided by $\cdga$ models, and in 
the topological setting provided by fundamental groups 
and cohomology rings. The general theory is illustrated 
with several classes of examples from geometry and 
topology:  smooth quasi-projective varieties, complex 
hyperplane  arrangements and their Milnor fibers, configuration 
spaces, and elliptic arrangements. 
}

\keywords{Algebraic model, cohomology ring, formality, 
resonance variety, characteristic variety, tangent cone, 
quasi-projective variety, configuration space, 
hyperplane arrangement, Milnor fiber, elliptic arrangement.}

\section{Introduction}
\label{sect:intro}

The Tangent Cone theorem relates two seemingly disparate 
sets of cohomology jump loci associated to a space $X$:  
the resonance varieties, which are constructed from 
information encoded in either the cohomology ring of $X$, 
or an algebraic model for this space, and the characteristic 
varieties, which depend on the {\it a priori}\/ much more subtle 
information carried by the cohomology of $X$ with coefficients 
in rank $1$ local systems.  We focus here on the interplay 
between these two sets of jump loci, which are even more 
tightly related under certain algebraic (positivity of weights),  
topological (formality), or geometric (quasi-projectivity) 
assumptions. 

\subsection{Resonance varieties}
\label{subsec:res intro}

We start in \S\ref{sect:resonance} with a description of the 
various resonance varieties associated to a commutative,  
differential graded algebra (for short, a $\cdga$). 
We continue in \S\ref{sect:models} with the resonance 
varieties associated to a space $X$, using as input 
either its cohomology algebra or a suitable 
algebraic model, and discuss the 
algebraic version of the Tangent Cone theorem.

We will assume throughout that $X$ is a reasonably nice 
space, to wit, a connected CW-complex with finitely many cells 
in each dimension.  To such a space, we associate two types 
of resonance varieties.   The classical ones are obtained 
from the cohomology algebra $A=H^*(X,\C)$, by setting 
\begin{equation}
\label{eq:intro res}
\RR^i(X)=\{a \in A^1 \mid  H^i(A, \delta_a) \ne 0\},  
\end{equation}
where, for each $a\in A^1$, we denote by $(A,\delta_a)$ 
the cochain complex with differentials $\delta_a\colon A^i\to A^{i+1}$ 
given by left-multiplication by $a$.  Each set $\RR^i(X)$ is a 
homogeneous subvariety of the complex affine space $A^1=H^1(X,\C)$. 

Lately, an alternate definition of resonance has emerged 
(in works such as \cite{DP-ccm, DPS-duke, DPS-14, MPPS, PS-springer}), 
whereby one replaces the cohomology algebra by an algebraic 
model for $X$, that is, a commutative differential graded algebra 
$(A,\D)$ weakly equivalent to the Sullivan model of polynomial 
forms on $X$, as defined in \cite{Su77}. We may then 
set up a cochain complex $(A,\delta_a)$ as above, but 
now with differentials given by $\delta_a(u)=au+\D{u}$, 
and define the resonance varieties $\RR^i(A)\subset H^1(A)$ 
just as before. 

Assuming now that each graded piece $A^i$ is finite-dimensional, 
the sets $\RR^i(A)$ are subvarieties of the affine space $H^1(A)$, 
which depend only on the isomorphism type of $A$. These 
varieties are not necessarily homogeneous; nevertheless, as shown 
in \cite{MPPS}, the following inclusion holds,
\begin{equation}
\label{eq:tc0 intro}
\TC_0(\RR^i(A))\subseteq \RR^i(X),
\end{equation}
where $\TC_0$ denotes the tangent cone at $0\in H^1(A)$.

Under some additional hypothesis, one can say more. Suppose  
our finite-type model $(A,\D)$ admits a $\Q$-structure compatible 
with that of the Sullivan model, 
and also has positive weights, in the sense of \cite{Su77, Mo}. 
Then, $\RR^i(A)$ is a finite union of rationally defined 
linear subspaces of $H^1(A)$, and
\begin{equation}
\label{eq:rinc intro}
\RR^i(A) \subseteq \RR^i(X).
\end{equation}

\subsection{Characteristic varieties}
\label{subsec:cvar intro}

We turn in \S\ref{sect:charvar} to the characteristic varieties 
of a space $X$, and to the two types of tangent cones  
associated to them. This sets the stage for the topological 
version of the Tangent Cone theorem, which is treated 
in \S\ref{sect:tcone}. 

Unlike the resonance varieties, which arise from an 
algebraic model, the characteristic varieties arise from the 
chain complex of the universal abelian cover of the space. 
Let  $\pi=\pi_1(X)$ be the fundamental group of $X$, 
let $\pi_{\ab}=H_1(X,\Z)$ be its abelianization, and let 
$\Char(X)=\Hom(\pi_{\ab},\C^{*})$ be its group of complex-valued 
characters. Then 
\begin{equation}
\label{eq:intro cv}
\VV^i(X)=\{\rho \in \Char(X) \mid 
H_i(X, \C_{\rho})\ne 0\}, 
\end{equation}
where $\C_{\rho}$ denotes the complex vector space $\C$, 
viewed as a module over the group algebra $\C [\pi_{\ab}]$ via 
$g\cdot z = \rho(g) z$, for $g\in \pi$ and $z\in \C$. 

The relationship between the characteristic and resonance 
varieties of a space goes through the tangent cone construction.  
Let us start by identifying the tangent space at the identity to the 
complex algebraic group $\Char(X)$ with the complex affine 
space $H^1(X,\C)$.  Then, as shown in \cite{Li02, DPS-duke}, 
we have the following chain of inclusions:
\begin{equation}
\label{eq:tc inc intro}
\tau_{1}(\VV^i(X))\subseteq  \TC_{1}(\VV^i(X))\subseteq \RR^i(X), 
\end{equation}
where $\tau_{1}$ denotes the `exponential tangent cone' at the 
identity $1\in \Char(X)$ (a finite union of rationally defined linear subspaces), 
and $\TC_{1}$ denotes the usual tangent cone at $1$ (a homogeneous 
subvariety).

The crucial property that bridges the gap between the two types of 
tangent cones to a characteristic variety and the corresponding 
resonance variety is that of  formality, in the sense of  
Sullivan \cite{Su77}.  Given a $1$-formal space, 
one of the main results from \cite{DPS-duke} establishes 
an isomorphism between the analytic germ of $\VV^1(X)$ 
at $1$ and the analytic germ of $\RR^1(X)$ at $0$.

More generally, if $X$ has an algebraic model $A$ with 
good finiteness properties, then, as shown in \cite{DP-ccm}, 
the characteristic varieties $\VV^i(X)$ may be identified 
around the identity with the resonance varieties $\RR^i(A)$.
Consequently, if $X$ is formal (that is, the cohomology algebra 
of $X$, endowed with the zero differential, is weakly equivalent 
to the Sullivan model), then the following `Tangent Cone formula' holds:
\begin{equation}
\label{eq:tcone formula intro}
\tau_{1}(\VV^i(X)) =  \TC_{1}(\VV^i(X)) =  \RR^i(X).
\end{equation}

Consequently, if either one of the two inclusions in \eqref{eq:tc inc intro} 
fails to be an equality, the space $X$ is not formal. Viewed this way, the 
Tangent Cone theorem can be thought of as a (quite powerful) formality 
obstruction. 

\subsection{Quasi-projective varieties}
\label{subsec:qp intro}

We conclude our overview of cohomology jump loci with an 
exploration of the Tangent Cone theorem  in the framework of 
complex algebraic geometry.  We start in \S\ref{sect:qproj} 
with the general theory of jump loci of smooth, 
quasi-projective varieties. We then specialize in \S\ref{sect:arr mf} 
to complements of hyperplane arrangements and their Milnor 
fibers, and in \S\ref{sect:elliptic} to complements of 
elliptic arrangements.

Let $X$ be a smooth, complex quasi-projective variety.   
Work of Arapura \cite{Ar}, as recently  sharpened by 
Budur and Wang \cite{BW1}, reveals a profound fact 
about the characteristic varieties $\VV^i(X)$: they are 
all finite unions of torsion-translated subtori of 
the character group $\Char(X)$. 

Every quasi-projective variety as above can be realized as the 
complement, $X=\overline{X}\setminus D$, of a normal-crossings divisor $D$ 
in a smooth, complex projective variety $\overline{X}$. Given such a `good' 
compactification, Morgan associates in \cite{Mo} an algebraic model 
for our variety, $A(X)= A (\overline{X},D)$. This `Gysin' model 
is a finite-dimensional, rationally defined $\cdga$ with positive weights, 
which is weakly equivalent to Sullivan's model for $X$. 

Using the aforementioned work of Arapura and Budur--Wang, 
as well as work of Dimca--Papadima \cite{DP-ccm}, 
we obtain the following formulation of the Tangent Cone theorem 
for smooth, quasi-projective varieties $X$:
\begin{equation}
\label{eq:tcone qp intro}
\tau_{1}(\VV^i(X)) =  \TC_{1}(\VV^i(X)) = \RR^i(A(X)) \subseteq  \RR^i(X).
\end{equation}

In degree $i=1$, the irreducible components of $\VV^1(X)$ which 
pass through the identity are in one-to-one 
correspondence with the set $\mathcal{E}_X$ of `admissible' maps 
$f\colon X\to \varSigma$, where $\varSigma$ is a smooth complex 
curve with $\chi(\varSigma)<0$. This leads to a concrete description 
of the variety $\RR^1(A(X))$, and of the variety $\RR^1(X)$ when 
$X$ is $1$-formal. 

Especially interesting is the case when $X=M(\A)$ is the complement 
of an arrangement $\A$ of hyperplanes in some complex vector space. 
The cohomology algebra $A=H^*(X,\C)$ admits a combinatorial 
description, in terms of the intersection lattice of $\A$. Moreover, 
the $\cdga$ $(A, 0)$ is a model for $X$; thus, formula \eqref{eq:tcone qp intro} 
holds with equalities throughout. 

For an arrangement complement as above, work of Falk and Yuzvinsky \cite{FY} 
identifies the set $\mathcal{E}_{X}$ with the set of multinets on 
sub-arrangements of $\A$, up to relabeling (see also \cite{PS-beta}).   
This yields a completely combinatorial description 
of the resonance variety $\RR^1(X)$, and of the components 
of  the characteristic variety $\VV^1(X)$ passing through the identity. 

Another smooth variety associated to an arrangement $\A$ 
is the Milnor fiber $F=F(\A)$, defined as the level set $Q=1$, where 
$Q$ is a defining polynomial for $\A$. The topology of this variety 
(even its first Betti number!)  is much less understood.  
As shown by Zuber \cite{Zu}, though, the inclusion 
$\TC_1(\VV^1(F))\subset \RR^1(F)$ can be strict;  
hence, $F$ can be non-formal. Further understanding 
of how the Tangent Cone formula works in this context hinges 
on finding a good compactification for $F$, and then computing 
the corresponding Gysin model and its resonance varieties.

The machinery of cohomology jump loci can also be brought to 
bear in the study of elliptic arrangements. Let $E^{\times n}$ be the 
$n$-fold product  of an elliptic curve $E$.  An elliptic arrangement in 
$E^{\times n}$ is a finite collection of fibers of group homomorphisms 
$E^{\times n}\to E$.  Assuming that all subspaces in the intersection 
poset of $\A$ are connected, Bibby constructs in \cite{Bi} a finite-dimensional, 
algebraic model for the complement, which can be thought of as a 
concrete version of the Gysin model.

A special case of this construction is the configuration 
space $\Conf(E,n)$ of $n$ distinct, ordered points on 
$E$, itself a classifying space for 
the $n$-stranded pure braid group on the torus.  
We illustrate the general theory in a simple, 
yet instructive example.  Direct computation shows 
that, for $X=\Conf(E,3)$, the 
resonance variety $\RR^1(A(X))$ is properly contained  
in $\RR^1(X)$, thereby establishing the non-formality 
of $X$.

\section{The resonance varieties of a $\cdga$}
\label{sect:resonance}

We start with the resonance varieties associated to a 
commutative differential graded algebra, some of their 
properties, and various ways to compute them. 

\subsection{Commutative differential graded algebras}
\label{subsec:cdga}

Let $A=(A^{\hdot},\D)$ be a commutative, differential 
graded algebra over the field $\C$.  
That is, $A=\bigoplus_{i\ge 0} A^i$ is a graded $\C$-vector 
space, endowed with a multiplication map 
$\cdot\colon A^i \otimes A^j \to A^{i+j}$ satisfying 
$u\cdot v = (-1)^{ij} v \cdot u$,  
and a differential $\D\colon A^i\to A^{i+1}$ 
satisfying $\D(u\cdot v) = \D{u}\cdot v 
+(-1)^{i} u \cdot \D{v}$, for all $u\in A^i$ and $v\in A^j$. 

Unless otherwise stated, we will assume throughout that 
$A$ is connected, i.e., $A^0=\C$, and of finite-type, i.e., 
$A^i$ is finite-dimensional, for all $i\ge 0$. 

Using only the underlying cochain complex structure of the $\cdga$, 
we let $Z^i(A)=\ker (\D\colon A^i\to A^{i+1})$ and 
$B^i(A)=\im (\D\colon A^{i-1}\to A^{i})$, and set 
$H^i(A)=Z^i(A)/B^i(A)$.  The direct sum of the 
cohomology groups, $H^{\hdot}(A)=\bigoplus_{i\ge 0} H^i(A)$, 
inherits an algebra structure from $A$.

A morphism between two $\cdga$s, $\varphi\colon A\to B$, is both 
an algebra map and a cochain map. Consequently, $\varphi$ induces a 
morphism $\varphi^*\colon H^{\hdot} (A)\to H^{\hdot} (B)$ 
between the respective cohomology algebras.  
We say that $\varphi$ is a quasi-isomorphism if $\varphi^*$ is an 
isomorphism. Likewise, we say $\varphi$ is a $q$-isomorphism (for some 
$q\ge 1$) if $\varphi^*$ is an isomorphism in degrees up to $q$ 
and a monomorphism in degree $q+1$.  

Two $\cdga$s $A$ and $B$ are {\em weakly equivalent} 
(or just {\em $q$-equivalent}) if there is a zig-zag of quasi-isomorphisms 
(or $q$-isomorphisms) connecting $A$ to $B$, in which 
case we write $A\simeq B$ (or $A\simeq_q B$).  

A $\cdga$ $(A,\D)$ is said to be {\em formal}\/ (or just {\em $q$-formal}) 
if it is weakly equivalent (or just $q$-equivalent) to its cohomology 
algebra, $H^{\hdot}(A)$, endowed with the zero differential. 

Finally, we say that $(A,\D)$ is rationally defined if  
$A$ is the complexification of a graded $\Q$-algebra $A_{\Q}$, 
and the differential $\D$ preserves $A_{\Q}$.

We will also consider the dual vector spaces 
$A_i=(A^i)^{\vee}:=\Hom(A^i, \C)$, 
and the chain complex $(A_{\hdot},\partial)$, where 
$\partial \colon A_{i+1} \to A_i$ is the dual to $\D\colon A^i\to A^{i+1}$. 
If $H_i(A)$ are the homology groups of this chain complex, then,  
by the Universal Coefficients theorem, $H_i(A)\cong (H^i(A))^{\vee}$.

\subsection{Resonance varieties}
\label{subsec:res}
Our connectivity assumption on the $\cdga$ $(A,\D)$ 
allows us to identify the vector space $H^1(A)$ 
with the cocycle space $Z^1(A)$. For each element 
$a\in Z^1(A)\cong H^1(A)$, we turn $A$ into a cochain complex, 
\begin{equation}
\label{eq:aomoto}
\xymatrix{(A^{\hdot} , \delta_{a})\colon  \ 
A^0  \ar^(.65){\delta^0_{a}}[r] & A^1
\ar^(.5){\delta^1_{a}}[r] 
& A^2   \ar^(.5){\delta^2_{a}}[r]& \cdots },
\end{equation}
with differentials given by $\delta^i_{a} (u)= a \cdot u + \D{u}$, 
for all $u \in A^i$.  The cochain condition is verified 
as follows:  $\delta_a^{i+1}\delta^i_{a} (u)= a^2 u +a \cdot\D{u}  
+ \D{a}\cdot u -a \cdot\D{u} + \D\D u=0$.

Computing the homology of these chain complexes 
for various values of the parameter $a$, and 
keeping track of the resulting Betti numbers singles   
out certain {\em resonance varieties}\/ inside the 
affine space $H^1(A)$.
More precisely, for each non-negative integer $i$, define  
\begin{equation}
\label{eq:rra}
\RR^i(A)= \{a \in H^1(A)   
\mid  H^i(A^{\hdot}, \delta_{a}) \ne 0\}.
\end{equation}

These sets can be defined for any connected $\cdga$.  
If $A$ is of finite-type (as we always assume), the sets $\RR^i(A)$ are,  
in fact, algebraic subsets of the ambient affine space $H^1(A)$.  
Clearly, $H^i(A^{\hdot}, \delta_{0})=H^i(A)$; thus, the point 
$0\in H^1(A)$ belongs to the variety $\RR^i(A)$ 
if and only if $H^i(A)\ne 0$.  Moreover, $\RR^0(A)=\{0\}$.  

When the differential of $A$ is zero, the resonance varieties 
$\RR^i(A)$ are homogeneous subsets of $H^1(A)=A^1$. 
In general, though, the resonance varieties of a $\cdga$ 
are not homogeneous: see \cite[Example 2.7]{MPPS} and 
Example \ref{ex:nonhomog} below. 

The following lemma follows quickly from the definitions 
(see \cite[Lemma 2.6]{MPPS} for details). 

\begin{lemma}[\cite{MPPS}] 
\label{lem:functoriality}
Let $\varphi\colon A\to A'$ be a $\cdga$ morphism, 
and assume $\varphi$ is an isomorphism up to degree $q$, 
and a monomorphism in degree $q+1$, for some $q\ge 0$.  
Then the induced isomorphism in cohomology, 
$\varphi^*\colon H^1(A')\to H^1(A)$, identifies 
$\RR^i(A)$ with $\RR^i(A')$ for each $i\le q$,
and sends $\RR^{q+1}(A)$ into $\RR^{q+1}(A')$. 
\end{lemma}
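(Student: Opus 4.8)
The plan is to reduce the whole statement to a single claim about the \emph{perturbed} cochain complexes, and then read off the conclusion about resonance varieties. Fix a cocycle $a\in Z^1(A)\cong H^1(A)$ and set $a'=\varphi(a)\in Z^1(A')\cong H^1(A')$. Since $\varphi$ is simultaneously an algebra map and a cochain map, it intertwines the two Aomoto differentials,
\[
\varphi(\delta_a u)=\varphi(au+\D u)=a'\,\varphi(u)+\D'\varphi(u)=\delta_{a'}\varphi(u),
\]
(writing $\D'$ for the differential of $A'$), so that $\varphi\colon (A,\delta_a)\to (A',\delta_{a'})$ is itself a cochain map. The crux of the matter is the claim that this map is again a \emph{$q$-isomorphism}: an isomorphism on cohomology in degrees $\le q$ and a monomorphism in degree $q+1$.

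Granting this claim, the lemma follows formally. The restriction of $\varphi$ to cocycles is precisely the degree-one induced map $\varphi^*$, which is an isomorphism for $q\ge1$ and a monomorphism for $q=0$, and which carries $a$ to $a'$; this identifies (resp.\ embeds) the ambient spaces, the case $i=0$ being immediate since $\RR^0=\{0\}$ on both sides. Now for $i\le q$ the isomorphism $H^i(A,\delta_a)\cong H^i(A',\delta_{a'})$ shows that $H^i(A,\delta_a)\ne 0$ if and only if $H^i(A',\delta_{a'})\ne 0$, i.e.\ $a\in\RR^i(A)$ if and only if $a'\in\RR^i(A')$; together with the ambient isomorphism this identifies $\RR^i(A)$ with $\RR^i(A')$. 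In degree $q+1$ the monomorphism $H^{q+1}(A,\delta_a)\hookrightarrow H^{q+1}(A',\delta_{a'})$ yields the implication $a\in\RR^{q+1}(A)\Rightarrow a'\in\RR^{q+1}(A')$, i.e.\ $\varphi^*$ carries $\RR^{q+1}(A)$ into $\RR^{q+1}(A')$.

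It remains to prove the claim, and this is where I expect the real difficulty to lie. The natural device is the mapping cone $C$ of $\varphi$: the hypothesis together with the long exact sequence of the cone gives $H^n(C,D_0)=0$ for all $n\le q$, where $D_0$ is the unperturbed cone differential; moreover the cone of $\varphi\colon(A,\delta_a)\to(A',\delta_{a'})$ is exactly $C$ carrying the perturbed differential $D_a=D_0+(a\cdot)$, so the claim is equivalent to the vanishing $H^n(C,D_a)=0$ for $n\le q$. The obstacle is that the perturbation $a\cdot$ raises homological degree by exactly $1$, just as $\D$ does, so there is no bicomplex separating the two and no naive degree filtration transfers the vanishing. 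A parameter deformation $D_0+t\,(a\cdot)$, filtered $t$-adically so that the associated graded differential is $D_0$, produces a spectral sequence whose $E_1$-page is built from $H^{\hdot}(C,D_0)$; this delivers the vanishing comfortably in degrees $\le q-1$, but the sharp top degree $q$ is threatened by a torsion ($\Tor$) contribution upon specializing at $t=1$. Recovering that last degree -- which is precisely what upgrades the conclusion from ``$\RR^i$ equal for $i\le q-1$'' to ``$\RR^i$ equal for $i\le q$'' -- is the delicate point; I would attack it by exploiting the DG-module structure of $C$ over $(A,\delta_a)$ to rule out the offending torsion, or, failing that, by a direct cocycle-level analysis confined to the boundary degree.
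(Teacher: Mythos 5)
Your reduction of the lemma to a statement about the perturbed complexes, and your observation that $\varphi$ intertwines $\delta_a$ and $\delta_{a'}$, are both correct. The problem is that you have misread the hypothesis. ``$\varphi$ is an isomorphism up to degree $q$, and a monomorphism in degree $q+1$'' refers to the maps $\varphi^i\colon A^i\to A'^i$ on the underlying graded vector spaces, \emph{not} to the induced maps on cohomology. The paper flags this distinction immediately after the lemma: ``The conclusions of Lemma \ref{lem:functoriality} do not follow if we only assume that $\varphi$ is a $q$-isomorphism,'' where $q$-isomorphism is the cohomological notion you adopted. Under your reading the claim you reduce to is simply false, and Example \ref{ex:nonhomog} is an explicit counterexample: the inclusion $A'=\bigwedge(a)\hookrightarrow A=\bigwedge(a,b)$ (with $\D b=ba$) is a quasi-isomorphism, yet at the point of $H^1$ corresponding to $a$ one has $H^1(A,\delta_a)=\C\ne 0$ while $H^1(A',\delta_a)=0$, so $\RR^1(A)=\{0,1\}$ and $\RR^1(A')=\{0\}$ are not identified. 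Consequently the mapping-cone/deformation spectral sequence program cannot be completed --- the obstruction you sensed in the top degree is not a technical nuisance but the shadow of a genuine failure, which in this example already occurs in degree $1$ with $q$ arbitrary.

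Under the correct (cochain-level) reading the lemma really does ``follow quickly from the definitions,'' and the second half of your write-up is exactly what is needed, with no cone at all: since $\varphi$ commutes with the Aomoto differentials and $\varphi^{i-1},\varphi^{i}$ are bijective and $\varphi^{i+1}$ is injective for $i\le q$, a direct five-lemma-style check shows $H^i(A,\delta_a)\to H^i(A',\delta_{a'})$ is bijective for $i\le q$; and since $\varphi^{q}$ is surjective and $\varphi^{q+1}$ injective, the map $H^{q+1}(A,\delta_a)\to H^{q+1}(A',\delta_{a'})$ is injective. Reading off non-vanishing then gives the stated identifications and the inclusion $\RR^{q+1}(A)\to\RR^{q+1}(A')$, exactly as in your second paragraph. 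So the fix is not a cleverer homological device but a re-reading of the hypothesis, after which your own formal argument closes the proof.
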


\begin{corollary}
\label{cor:invariance}
If $A$ and $A'$ are isomorphic $\cdga$s, then their 
resonance varieties are ambiently isomorphic.  
\end{corollary}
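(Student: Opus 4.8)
The plan is to derive Corollary \ref{cor:invariance} directly from Lemma \ref{lem:functoriality}. The key observation is that an isomorphism of \cdga s is, in particular, a morphism that is an isomorphism in every degree, so the hypotheses of the lemma are satisfied for \emph{every} $q\ge 0$. Concretely, suppose $\varphi\colon A\to A'$ is a \cdga\ isomorphism. Then $\varphi$ is an isomorphism up to degree $q$ and a monomorphism in degree $q+1$ for all $q$; applying Lemma \ref{lem:functoriality} with any fixed $i$ and taking $q=i$, the induced map $\varphi^*\colon H^1(A')\to H^1(A)$ identifies $\RR^i(A)$ with $\RR^i(A')$.

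First I would note that $\varphi$, being a \cdga\ isomorphism, induces an isomorphism $\varphi^*\colon H^{\hdot}(A')\to H^{\hdot}(A)$ on cohomology algebras; in degree one this gives a linear isomorphism of the ambient affine spaces $H^1(A')\cong H^1(A)$. The content of the corollary is precisely that under this linear identification the resonance subvarieties correspond. Since the lemma already provides, for each $i$, the equality of $\RR^i(A)$ with the image of $\RR^i(A')$ under $\varphi^*$, the only remaining point is that this correspondence is induced by a genuine ambient isomorphism of affine spaces (namely $\varphi^*$ itself), which it is, being a linear isomorphism. Thus each $\RR^i(A)$ is carried onto $\RR^i(A')$ by an invertible linear map of the ambient spaces, which is the definition of being ambiently isomorphic.

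There is essentially no hard part here, since the corollary is an immediate consequence of the lemma once one recognizes that an isomorphism validates the hypotheses for arbitrarily large $q$. The only mild subtlety to address is the direction of the maps: the lemma states that $\varphi^*$ goes from $H^1(A')$ to $H^1(A)$ and identifies $\RR^i(A)$ with $\RR^i(A')$, so I would be careful to record that, since $\varphi$ is an isomorphism, $\varphi^*$ is invertible and the identification is symmetric. This symmetry ensures the relation is a genuine ambient isomorphism rather than merely a containment in one direction.
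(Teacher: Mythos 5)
Your proof is correct and is exactly the argument the paper intends: the corollary is stated as an immediate consequence of Lemma \ref{lem:functoriality}, obtained by observing that a \cdga\ isomorphism satisfies the lemma's hypotheses for every $q$, so that the invertible linear map $\varphi^*$ on $H^1$ carries each $\RR^i(A)$ onto $\RR^i(A')$. Your remark about checking the direction of $\varphi^*$ and the symmetry of the identification is a sensible precaution but adds nothing beyond the paper's implicit reasoning.
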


The conclusions of Lemma \ref{lem:functoriality} do not follow  
if we only assume that $\varphi\colon A\to A'$ is a $q$-isomorphism.  
This phenomenon is illustrated in \cite[Example 2.7]{MPPS} and also in 
Example \ref{ex:nonhomog} below. 

As shown in \cite{PS-plms, PS-springer}, the resonance varieties behave 
reasonably well under tensor products:
\begin{equation}
\label{eq:rprod}
\RR^i(A\otimes A') \subseteq \bigcup_{p+q=i} \RR^p(A)\times \RR^q(A').
\end{equation}
Moreover, if the differentials of both $A$ and $A'$ are zero, then 
equality is achieved in the above product formula. 

In a similar manner, we can define a homological version of resonance 
varieties, by considering the chain complexes $(A_{\hdot},\partial^{\alpha})$ 
with differentials $\partial_i^{\alpha} = (\delta^i_a)^{\vee}$ 
for $\alpha\in H_1(A)$ dual to $a\in H^1(A)$, and setting 
\begin{equation}
\label{eq:hom res}
\RR_i(A)= \{\alpha \in H_1(A)   
\mid  H_i(A_{\hdot}, \partial^{\alpha}) \ne 0\}.
\end{equation}

\begin{lemma}
\label{lem:resdual}
For each $i\ge 0$, the duality isomorphism $H^1(A)\cong H_1(A)$ 
identifies the resonance varieties $\RR^i(A)$ and $\RR_i(A)$.
\end{lemma}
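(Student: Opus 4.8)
The plan is to recognize the homological resonance complex as the $\C$-linear dual of the Aomoto cochain complex, and then to invoke the exactness of dualization over a field. First I would fix an element $a\in Z^1(A)\cong H^1(A)$ together with its dual $\alpha\in H_1(A)$, and unwind the definitions. Since $\partial^{\alpha}_i=(\delta^i_a)^{\vee}$ by construction, the chain complex $(A_{\hdot},\partial^{\alpha})$ is, degree by degree, precisely the dual cochain complex of the Aomoto complex $(A^{\hdot},\delta_{a})$: each space $A_i$ is $(A^i)^{\vee}$, and each boundary map is the transpose of the corresponding coboundary map.

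Next I would apply the Universal Coefficients theorem, in the same form already invoked in \S\ref{subsec:cdga}: because $\C$ is a field, the dualization functor $V\mapsto V^{\vee}$ is exact, and hence commutes with the formation of (co)homology. Applied to the complex $(A^{\hdot},\delta_{a})$, this yields a natural isomorphism
\[
H_i(A_{\hdot},\partial^{\alpha})\cong \big(H^i(A^{\hdot},\delta_{a})\big)^{\vee}
\]
for every $i\ge 0$. Since a vector space over $\C$ vanishes exactly when its dual vanishes, it follows that $H_i(A_{\hdot},\partial^{\alpha})\ne 0$ if and only if $H^i(A^{\hdot},\delta_{a})\ne 0$.

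Finally, I would translate this equivalence into the language of resonance varieties. By the defining formulas \eqref{eq:rra} and \eqref{eq:hom res}, the displayed equivalence says precisely that $\alpha\in\RR_i(A)$ if and only if $a\in\RR^i(A)$. As $a$ and $\alpha$ correspond under the duality isomorphism $H^1(A)\cong H_1(A)$, this isomorphism carries $\RR^i(A)$ onto $\RR_i(A)$ for each $i$, which is the assertion of the lemma.

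The only genuinely delicate point is the bookkeeping of identifications: one must verify that the grading conventions align, so that homological degree $i$ is matched with cohomological degree $i$ (the transpose $(\delta^i_a)^{\vee}$ landing in $A_i$ makes the homology at $A_i$ dual to the cohomology at $A^i$), and that the duality isomorphism $H^1(A)\cong H_1(A)$ in play is the one induced by the evaluation pairing used to define ``$\alpha$ dual to $a$.'' Once these identifications are fixed consistently, the result is a formal consequence of the exactness of dualization and requires no further computation.
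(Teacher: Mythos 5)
Your proposal is correct and follows essentially the same route as the paper's proof: both reduce the claim to the Universal Coefficients theorem over $\C$ (i.e.\ exactness of dualization), identifying $H_i(A_{\hdot},\partial^{\alpha})$ with the dual of $H^i(A^{\hdot},\delta_a)$ and concluding that the two vanishing conditions coincide. Your write-up is simply a more detailed unpacking of the one-line argument given in the paper.
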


\begin{proof}
\smartqed
By the Universal Coefficients theorem (over the field $\C$), 
we have that $H_i(A^{\hdot}, \delta_{a}) \cong H_i(A_{\hdot}, \partial^{\alpha})$. 
The claim follows.
\qed
\end{proof}

\subsection{A generalized Koszul complex}
\label{subsec:univ aomoto}

Let us fix now a basis $\{ e_1,\dots, e_n \}$ for the complex vector space 
$H^1(A)$, and let $\{ x_1,\dots, x_n \}$ be the Kronecker dual basis 
for the vector space $H_1(A)=(H^1(A))^{\vee}$.  In the sequel, 
we shall identify the symmetric algebra $\Sym(H_1(A))$ 
with the polynomial ring $S=\C[x_1,\dots, x_n]$,
and we shall view $S$ as the coordinate ring of 
the affine space $H^1(A)$.

Consider now the cochain complex of free $S$-modules, 
\begin{equation}
\label{eq:univ aomoto}
\xymatrixcolsep{22pt}
(A^{\hdot} \otimes S,\delta) \colon 
\xymatrix{
\cdots \ar[r] 
&A^{i}\otimes S \ar^(.45){\delta^{i}}[r] 
&A^{i+1} \otimes S \ar^(.5){\delta^{i+1}}[r] 
&A^{i+2} \otimes S \ar[r] 
& \cdots},
\end{equation}
where the differentials are the $S$-linear maps defined by 
\begin{equation}
\label{eq:diff}
\delta^{i}(u \otimes s)= \sum_{j=1}^{n} e_j u \otimes s x_j + \D u \otimes s
\end{equation}
for all $u\in A^{i}$ and $s\in S$. 
As before, the fact that this is a cochain complex is easily verified. 
Indeed, $\delta^{i+1}\delta^i (u\otimes s)$ equals
\begin{align*}
\sum_{k} e_k\bigg( \sum_{j} &
e_j u \otimes s x_j +\D{u} \otimes s\bigg) \otimes x_k + \D\bigg(\sum_{j} 
e_j u \otimes s x_j + \D{u} \otimes s\bigg) \\ 
&= \sum_{j,k} e_k e_j  u \otimes s x_j x_k +\sum_k e_k \D{u}\otimes s x_k
-\sum_j e_j \D{u}\otimes s x_j \\
&= 0,
\end{align*}
where we used the fact that $e_k e_j=-e_je_k$.  

\begin{rem}
The cochain complex \eqref{eq:univ aomoto} is independent of 
the choice of basis  $\{ e_1,\dots, e_n \}$ for $H^1(A)$.  Indeed, 
under the canonical identification $H^1(A)\otimes H_1(A)\cong 
\Hom (H^1(A),H^1(A))$, the element  $\sum_{j=1}^{n} e_j  \otimes  x_j$ 
used in defining the differentials $\delta^{i}$ corresponds to the 
identity map of $H^1(A)$.
\hfill $\Diamond$
\end{rem}

\begin{exm}
\label{ex:koszul}
Let $E=\bigwedge (e_1,\dots ,e_n)$ be the exterior algebra 
(with zero differential), and let $S=\C[x_1,\dots ,x_n]$ be its 
Koszul dual.  Then the cochain complex $(E^{\hdot}\otimes S,\delta)$ is 
simply the Koszul complex $K_{\hdot}(x_1,\dots,x_n)$. 
\hfill $\Diamond$
\end{exm}

More generally, if the $\cdga$ $A$ has zero differential, 
each boundary map $\delta^i \colon A^i\otimes S\to  A^{i+1}\otimes S$ 
is given by a matrix whose entries are linear forms in the 
variables $x_1,\dots ,x_n$.  In general, though, the entries of 
$\delta^i$ may also have non-zero constant terms, as can be 
seen in Examples \ref{ex:nonhomog}, \ref{ex:heis}, and \ref{ex:conf torus} 
below.

The relationship between the cochain complexes \eqref{eq:aomoto} 
and \eqref{eq:univ aomoto} is given by the following  lemma 
(for a more general statement, we refer to the proof of 
Lemma 8.8(1) from \cite{DP-ccm}).

\begin{lemma}
\label{lem:two aom}
The specialization of the cochain complex $A\otimes S$ 
at an element $a\in H^1(A)$ coincides with the cochain 
complex $(A,\delta_{a})$. 
\end{lemma}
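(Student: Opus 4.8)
The plan is to make the word ``specialization'' precise and then verify directly that the specialized differential equals $\delta_a$. Recall that $S=\C[x_1,\dots,x_n]$ is the coordinate ring of the affine space $H^1(A)$, so a point $a\in H^1(A)$ determines an evaluation homomorphism $\ev_a\colon S\to \C$, $x_j\mapsto a_j$, where $a=\sum_{j=1}^n a_j e_j$ in the chosen basis; since $\{x_j\}$ is Kronecker dual to $\{e_j\}$, we have $a_j=x_j(a)$. Specialization at $a$ is then the base-change functor $-\otimes_S\C_a$, where $\C_a$ denotes $\C$ viewed as an $S$-module through $\ev_a$. The goal is to show that applying this functor to the complex $(A^{\hdot}\otimes S,\delta)$ of \eqref{eq:univ aomoto} returns the complex $(A^{\hdot},\delta_a)$ of \eqref{eq:aomoto}.

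First I would identify the modules. For each $i$, base change sends the free $S$-module $A^i\otimes S$ to
\[
(A^i\otimes S)\otimes_S\C_a \;\cong\; A^i\otimes_{\C}\C \;\cong\; A^i,
\]
via $u\otimes s\mapsto \ev_a(s)\,u$; thus the degree-$i$ term of the specialized complex is canonically $A^i$, matching the underlying space of $(A^{\hdot},\delta_a)$. Next I would trace the differential through this identification. Applying $\ev_a$ to the defining formula \eqref{eq:diff} (with $s=1$) gives
\[
\delta^i(u\otimes 1)\big|_a \;=\; \sum_{j=1}^n a_j\, e_j u + \D u \;=\; \Big(\sum_{j=1}^n a_j e_j\Big) u + \D u \;=\; a\cdot u + \D u,
\]
which is exactly $\delta^i_a(u)$. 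Conceptually this is just the observation recorded in the Remark above: the tensor $\sum_j e_j\otimes x_j$ represents the identity of $H^1(A)$, so evaluating its polynomial factor at $a$ recovers the element $a$ itself.

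Since the argument reduces to this single substitution, there is no genuine obstacle; the only thing demanding attention is the bookkeeping between the two Kronecker-dual bases and the convention $a_j=x_j(a)$. I would close by observing that this is the pointwise specialization underlying the more general base-change statement referenced in the proof of Lemma 8.8(1) of \cite{DP-ccm}.
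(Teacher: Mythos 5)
Your proposal is correct and follows essentially the same route as the paper's proof: specialize via the evaluation map $\ev_a\colon S\to S/\m_a=\C$, identify $(A^i\otimes S)\otimes_S\C$ with $A^i$, and substitute $x_j\mapsto a_j$ in the formula \eqref{eq:diff} to recover $\delta^i_a(u)=a\cdot u+\D u$. The extra care you take with the module identification and the Kronecker-duality bookkeeping is harmless and matches the paper's computation exactly.
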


\begin{proof}
\smartqed
Write $a=\sum_{j=1}^{n} a_j e_j\in H^1(A)$, and let 
$\m_a=(x_1-a_1,\dots , x_n-a_n)$ be the maximal ideal at $a$.  
The evaluation map $\ev_a \colon S\to S/\m_a=\C$ is the ring 
morphism given by $g\mapsto g(a_1,\dots, a_n)$.  The resulting 
cochain complex, $A (a)= A\otimes_S S/\m_a$, 
has differentials $\delta^i(a)$ given by
\begin{equation}
\label{eq:deltai}
\delta^i(a)(u)=\sum_{j=1}^{n} e_j u \otimes \ev_a(x_j) + \D{u}
= \sum_{j=1}^{n} e_j u\cdot  a_j +\D{u}= a\cdot u + \D{u}. 
\end{equation}
Thus, $A (a)=(A,\delta_{a})$, as claimed.
\qed
\end{proof}

In a completely analogous fashion, we may define a chain 
complex 
\begin{equation}
\label{eq:hom koszul}
\xymatrixcolsep{22pt}
(A_{\hdot} \otimes S,\partial) \colon 
\xymatrix{
\cdots \ar[r] 
&A_{i+1}\otimes S \ar^(.52){\partial_{i+1}}[r] 
&A_{i} \otimes S \ar^(.47){\partial_{i}}[r] 
&A_{i-1} \otimes S \ar[r] 
& \cdots}
\end{equation}
by essentially transposing the differentials of $(A^{\hdot} \otimes S,\delta)$.  
The previous lemma shows that the specialization of 
$(A_{\hdot} \otimes S,\partial)$ at an element $\alpha\in H_1(A)$ 
coincides with the chain complex $(A_{\hdot},\partial^{\alpha})$.

\subsection{Alternate views of resonance}
\label{subsec:supports}

As is well-known, the classical Koszul complex is exact.  
For an arbitrary $\cdga$, though, the cochain complex 
$(A^{\hdot}\otimes S,\delta)$ is not: its non-exactness 
is measured by the cohomology groups $H^i(A\otimes S)$, 
which are finitely generated modules over the polynomial ring $S$.   
This leads us to consider the support loci of these cohomology 
modules, 
\begin{equation}
\label{eq:alt res}
\wR^i(A) = \supp (H^i(A^{\hdot}\otimes S,\delta)),
\end{equation}
viewed again as algebraic subsets of the affine space $H^1(A)$. 

For instance, if $(E^{\hdot}\otimes S,\delta)=K_{\hdot}(x_1,\dots,x_n)$ 
is the Koszul complex from Example \ref{ex:koszul}, the support loci  
$\wR^i(E)$ vanish, for all $0\le i\le n$. 

We may also identify the polynomial ring $S$ with the 
symmetric algebra on $H^1(A)$, viewed as the coordinate 
ring of $H_1(A)$.  In this case, we have the support loci of 
the corresponding homology modules, 
\begin{equation}
\label{eq:alt hom res}
\wR_i(A) = \supp (H_i(A_{\hdot}\otimes S,\partial)), 
\end{equation}
which are algebraic subsets of the affine space $H_1(A)$. 
For a detailed discussion of support loci of chain complexes 
over an affine algebra, we refer to \cite{PS-mrl}.  

Since the ring $S$ is no longer a field  
(or even a PID, unless $H_1(A)=0$ or $\C$), the relation between these 
two types of support loci is not as straightforward as the one between 
the corresponding jump loci (see Example \ref{ex:nonhomog} below).  
Nevertheless, the cohomology jump loci and the homology support 
loci may be related in a filtered way, as follows.

\begin{theorem}
\label{thm:res compare}
For any finite-type $\cdga$ $(A,\D)$, and for any $q\ge 0$, 
the duality isomorphism $H^1(A)\cong H_1(A)$ restricts 
to an isomorphism 
\[
\bigcup_{i\le q} \RR^i(A) \cong \bigcup_{i\le q} \wR_i(A). 
\] 
\end{theorem}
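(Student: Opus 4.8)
The plan is to reduce the statement to a pointwise comparison, at each closed point $\alpha$ of the affine space $H_1(A)$, between the homology of the specialized chain complex and the stalks of the homology modules of the ``universal'' complex $(A_{\hdot}\otimes S,\partial)$ from \eqref{eq:hom koszul}. Writing $C=(A_{\hdot}\otimes S,\partial)$ and $\m_\alpha$ for the maximal ideal at $\alpha$, I would first record two dictionary entries. On one hand, by the homological form of Lemma~\ref{lem:two aom}, the specialization $C\otimes_S S/\m_\alpha$ is exactly the complex $(A_{\hdot},\partial^{\alpha})$, so that $\alpha\in\RR_i(A)$ precisely when $H_i(C\otimes_S S/\m_\alpha)\ne 0$. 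On the other hand, $\wR_i(A)=\supp H_i(C)$, and since $H_i(C)$ is a finitely generated $S$-module, Nakayama's lemma gives $\alpha\in\wR_i(A)$ iff $H_i(C)\otimes_S S/\m_\alpha\ne 0$. Because all the sets in sight are Zariski-closed and $\C$ is algebraically closed, it suffices to prove the equality of the two unions on closed points; via Lemma~\ref{lem:resdual} (which identifies $\RR^i(A)$ with $\RR_i(A)$ under duality) the theorem then amounts to the equality $\bigcup_{i\le q}\RR_i(A)=\bigcup_{i\le q}\wR_i(A)$ inside $H_1(A)$.

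The bridge between fiberwise homology and the stalks of homology is the failure of $-\otimes_S S/\m_\alpha$ to commute with homology, which I would control with the hyper-Tor spectral sequence
\[
E^2_{p,j}=\Tor^S_p\!\big(H_j(C),S/\m_\alpha\big)\ \Longrightarrow\ H_{p+j}\big(C\otimes_S S/\m_\alpha\big),
\]
available because $C$ is a complex of free $S$-modules. The one module-theoretic input I need is that, for finitely generated $M$, one has $\Tor^S_p(M,S/\m_\alpha)\ne 0$ for some $p$ if and only if $M_{\m_\alpha}\ne 0$: indeed $\Tor$ commutes with localization, and Nakayama upgrades a nonzero stalk to a nonzero $\Tor_0=M\otimes_S S/\m_\alpha$.

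With these in place, the inclusion $\bigcup_{i\le q}\RR_i(A)\subseteq\bigcup_{i\le q}\wR_i(A)$ is immediate: if $H_i(C\otimes_S S/\m_\alpha)\ne 0$ with $i\le q$, some $E^2_{p,j}$ with $p+j=i$ is nonzero, forcing $H_j(C)_{\m_\alpha}\ne 0$ with $j\le i\le q$, i.e.\ $\alpha\in\wR_j(A)$. The reverse inclusion is the crux, and is where cancellation in the spectral sequence could in principle destroy fiber homology even though a stalk is nonzero. I would resolve it by looking at the bottom of the homology: the complex $C$ is bounded below (since $A_i=(A^i)^\vee=0$ for $i<0$ by connectivity), so if $\alpha\in\wR_i(A)$ for some $i\le q$ we may set $j_0=\min\{\,j:H_j(C)_{\m_\alpha}\ne 0\,\}$, and $j_0\le i\le q$. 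In total degree $j_0$ the only possibly nonzero $E^2$-term is $E^2_{0,j_0}=H_{j_0}(C)\otimes_S S/\m_\alpha$, which is nonzero by Nakayama; every other contribution $E^2_{p,j_0-p}$ (with $p\ge 1$) involves $H_{j_0-p}(C)_{\m_\alpha}=0$ and hence vanishes. Moreover $E^2_{0,j_0}$ sits in the corner of the first quadrant, so no differential leaves it, and every differential into it originates from a term $E^r_{r,j_0-r+1}$ that already vanishes by minimality of $j_0$; thus $E^\infty_{0,j_0}=E^2_{0,j_0}\ne 0$ and $H_{j_0}(C\otimes_S S/\m_\alpha)\ne 0$, i.e.\ $\alpha\in\RR_{j_0}(A)$ with $j_0\le q$.

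Combining the two inclusions yields the desired equality of unions in $H_1(A)$, and transporting it through the duality isomorphism of Lemma~\ref{lem:resdual} gives the stated isomorphism. The main obstacle is exactly the reverse inclusion just discussed: passing from nonvanishing of a stalk (a ``generic'' condition on $H_{\hdot}(C)$) to nonvanishing of the fiber homology of the specialized complex. The minimal-degree argument above is the efficient way around it; conceptually it is the statement that, after localizing at $\m_\alpha$, the complex $C$ splits as a minimal complex plus a contractible one, and the lowest-degree generator of the minimal part survives both to the fiber homology and to a nonzero homology module, which is why the two \emph{unions} coincide even though the individual loci $\RR_i$ and $\wR_i$ need not.
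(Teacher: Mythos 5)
Your proposal is correct. It follows the paper's proof for exactly one step --- the reduction via Lemma~\ref{lem:resdual} from $\RR^i(A)$ to the homological jump loci $\RR_i(A)$ --- and then diverges: where the paper simply invokes Theorem~2.5 of \cite{PS-mrl} for the equality $\bigcup_{i\le q}\RR_i(A)=\bigcup_{i\le q}\wR_i(A)$, you supply a self-contained proof of that equality. Your argument via the K\"unneth (hyper-Tor) spectral sequence $E^2_{p,j}=\Tor^S_p(H_j(C),S/\m_\alpha)\Rightarrow H_{p+j}(C\otimes_S S/\m_\alpha)$ is sound: the complex $C=(A_{\hdot}\otimes S,\partial)$ is a bounded-below complex of finite free $S$-modules, so the spectral sequence exists and is first-quadrant; the forward inclusion is the trivial direction (a nonzero fiber homology group forces a nonzero $E^2_{p,j}$ with $j\le p+j=i$, hence a nonzero stalk $H_j(C)_{\m_\alpha}$); and your minimal-degree argument for the reverse inclusion correctly isolates the corner term $E^2_{0,j_0}=H_{j_0}(C)\otimes_S S/\m_\alpha$, which is nonzero by Nakayama and survives to $E^\infty$ because everything below total degree $j_0$ vanishes after localizing at $\m_\alpha$. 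This is essentially the content of the cited result of \cite{PS-mrl} (which can equivalently be seen via the splitting of $C_{\m_\alpha}$ into a minimal complex plus a contractible one, as you remark). What your route buys is transparency and self-containment --- in particular it makes visible exactly why only the \emph{unions} over $i\le q$, and not the individual loci, can be matched; what the paper's route buys is brevity and a pointer to the general framework of support loci for complexes over affine algebras developed in \cite{PS-mrl}.
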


\begin{proof}
\smartqed
As noted previously, the duality isomorphism $H^1(A)\cong H_1(A)$ 
identifies $\RR^i(A)$ with $\RR_i(A)$, for each $i\ge 0$.  

On the other hand, we know that $(A_{\hdot}\otimes S,\partial)$ is a chain 
complex of free, finitely generated modules over the affine $\C$-algebra $S$.  
Therefore, by Theorem 2.5 from \cite{PS-mrl}, we have that 
$\bigcup_{i\le q} \RR_i(A) = \bigcup_{i\le q} \wR_i(A)$, and the 
conclusion follows.
\qed
\end{proof}

As noted previously, when the $\cdga$ $A$ has differential $\D=0$,  
the boundary maps $\delta$ and $\partial$ from the chain complexes 
\eqref{eq:univ aomoto} and \eqref{eq:hom koszul} have entries 
which are linear forms in the variables of $S$.   
Consequently,  the sets $\RR^i(A)$ and $\wR_i(A)$ are homogeneous 
subvarieties of the affine space $A^1=H^1(A)$. 

\begin{corollary}
\label{cor:r1}
If $A$ has zero differential,  the resonance variety 
$\RR^1(A)\subset A^1$ is the vanishing locus of the 
codimension $1$ minors of the matrix of $S$-linear 
forms $\partial_2\colon A_2\otimes S\to A_1\otimes S$, 
or of its transpose, $\delta^1\colon A^1\otimes S\to A^2\otimes S$.
\end{corollary}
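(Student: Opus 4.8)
The plan is to combine the previous results with standard support-loci theory for the $i\le 1$ range. First I would invoke Theorem~\ref{thm:res compare} with $q=1$, which, together with $\RR^0(A)=\{0\}$ and $\wR_0(A)=\{0\}$ (the latter because $H_0(A_{\hdot}\otimes S,\partial)$ is supported only at the origin when $A^0=\C$), lets me strip off the degree-$0$ contribution and conclude that the duality isomorphism identifies $\RR^1(A)$ with $\wR_1(A)=\supp(H_1(A_{\hdot}\otimes S,\partial))$. Thus the problem reduces to computing the support of the first homology of the chain complex \eqref{eq:hom koszul} of free $S$-modules. Since $\D=0$, the entries of all the $\partial_i$ are $S$-linear forms, which is exactly what makes the resulting variety homogeneous.

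Next I would analyze the tail of the chain complex $A_2\otimes S \xrightarrow{\partial_2} A_1\otimes S \xrightarrow{\partial_1} A_0\otimes S=S$. Because $A$ is connected, $A_0=\C$ and $\partial_1$ is (up to the identification of bases) the map sending $x_j^{\vee}$ to the scalar $1$ paired against the Euler-type element $\sum e_j\otimes x_j$; concretely $\partial_1$ is surjective with kernel a free summand, and $H_1=\ker\partial_1/\im\partial_2$. The key computation is then to identify $\supp H_1$ with the locus where $\partial_2$ fails to surject onto $\ker\partial_1$. Using the standard Buchsbaum--Eisenbud / Fitting-ideal criterion for the rank of a map of free modules, the support of the cokernel of $\partial_2$ into a free module of the appropriate rank is cut out by the vanishing of the maximal-but-one minors, i.e.\ the codimension~$1$ minors of the matrix representing $\partial_2$. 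I would spell out that ``codimension~$1$'' here refers to minors of size $\rk\partial_2-1$ one step below the generic rank, matching the definition in the statement.

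The transpose assertion follows formally: by Lemma~\ref{lem:resdual} and the observation at the end of \S\ref{subsec:univ aomoto} that $(A^{\hdot}\otimes S,\delta)$ is obtained by transposing $(A_{\hdot}\otimes S,\partial)$, the matrix of $\delta^1\colon A^1\otimes S\to A^2\otimes S$ is the transpose of that of $\partial_2$, and transposition preserves the minor ideals of every size. Hence the vanishing locus of the codimension~$1$ minors of $\delta^1$ equals that of $\partial_2$, and both equal $\RR^1(A)$.

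The main obstacle I anticipate is making the Fitting-ideal bookkeeping precise near the origin: the variety $\RR^1(A)$ always contains $0$ (when $H^1(A)\ne 0$), yet the rank of $\partial_2$ can drop there for trivial reasons, so I must argue that the relevant minor ideal still cuts out exactly the jump locus rather than something larger. I expect to handle this by working with the generic rank of $\partial_2$ over the fraction field of $S$ and checking that the Fitting-ideal description of $\supp(\coker\partial_2)$ transports correctly under the inclusion $\im\partial_2\subseteq\ker\partial_1$, using that $\ker\partial_1$ is itself free of predictable rank; the homogeneity guaranteed by $\D=0$ ensures no subtleties from constant terms in the entries.
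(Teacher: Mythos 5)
Your skeleton is exactly the paper's: invoke Theorem~\ref{thm:res compare} with $q=1$, strip off the degree-zero piece (both $\RR^0(A)$ and $\wR_0(A)=\supp\bigl(S/(x_1,\dots,x_n)\bigr)$ equal $\{0\}$), identify $\RR^1(A)$ with $\wR_1(A)=\supp(\ker\partial_1/\im\partial_2)$, and exploit the explicit form $\partial_1=\begin{pmatrix} x_1 & \cdots & x_n\end{pmatrix}$. However, two of your intermediate assertions are wrong as stated, and one of them changes the answer.

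The substantive problem is your gloss on which minors are meant. Writing $n=\dim A_1$, the locus you need is where $\rank\partial_2(a)<n-1=\dim\ker\bigl(\partial_1(a)\bigr)$ for $a\ne 0$, i.e.\ the common zero locus of the minors of size $n-1$; ``codimension $1$'' is measured against the rank of the \emph{target} $A_1\otimes S$, exactly as for the elementary ideal $E_1$ in \S\ref{subsec:alexvars}. When $\RR^1(A)$ is proper, the generic rank of $\partial_2$ equals $n-1$, so the relevant minors have size \emph{equal to} the generic rank, not ``one step below the generic rank'' as you write. Taken literally, your prescription computes the locus where the rank of $\partial_2$ drops by two, which is in general a proper subvariety of $\RR^1(A)$; and when the generic rank of $\partial_2$ is smaller than $n-1$ (so that $\RR^1(A)=A^1$), your recipe gives the wrong answer outright.

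The second inaccuracy is the claim that $\partial_1$ is surjective with free kernel over $S$. The image of $\partial_1\colon S^n\to S$ is the maximal ideal $(x_1,\dots,x_n)$, not $S$, and $\ker\partial_1$ is the Koszul syzygy module, which is not free (let alone a free summand) once $n\ge 3$. Both statements become true only after localizing at a point $a\ne 0$, and that is all you actually need: away from the origin $\ker\partial_1$ is locally free of rank $n-1$, Nakayama plus the Fitting-ideal criterion gives $a\in\wR_1(A)$ iff all $(n-1)$-minors of $\partial_2$ vanish at $a$, and at $a=0$ both sides contain the origin because every entry of $\partial_2$ is a linear form. Your transpose argument via Lemma~\ref{lem:resdual} and the invariance of minor ideals under transposition is fine.
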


\begin{proof}
\smartqed
Using Theorem \ref{thm:res compare} and the above discussion, 
we obtain the equality $\RR^1(A)=\wR_1(A)$. By definition, $\wR_1(A)$ 
is the support locus of the $S$-module $H_1(A_{\hdot}\otimes S)=
\ker \partial_1/\im \partial_2$.  Writing $A_1=\C^n$ and $S=\C[x_1,\dots , x_n]$,  
we have that $\partial_1=\begin{pmatrix} x_1 & \cdots & x_n \end{pmatrix}$. 
The conclusion follows.
\qed
\end{proof}

We illustrate the theory with a simple, yet meaningful example, 
variants of which can also be found in \cite{MPPS, DPS-14}.

\begin{exm}
\label{ex:nonhomog}
Let $A$ be the exterior algebra on generators $a,b$ in 
degree $1$, endowed with the differential given by $\D{a}=0$ 
and $\D{b}=b\cdot a$. Then $H^1(A)=\C$, generated by $a$. 
Writing $S=\C[x]$, 
the chain complex \eqref{eq:hom koszul} takes the form 
\begin{equation}
\label{eq:toy}
\xymatrixcolsep{50pt}
A_{\hdot}\otimes S \colon 
\xymatrix{
S \ar^(.48){\partial_2=
\left(\begin{smallmatrix} 0 \\ x-1\end{smallmatrix}\right)
}[r] 
&S^2 \ar^(.5){\partial_1=
\left(\begin{smallmatrix} x & 0\end{smallmatrix}\right)
}[r] 
&S }.
\end{equation}
Hence, $H_1(A_{\hdot}\otimes S)=S/(x-1)$, and so $\wR_1(A)=\{1\}$.   
Using the above theorem, we conclude that $\RR^1(A)=\{0,1\}$. 

Note that  $\RR^1(A)$ is a non-homogeneous subvariety of $\C$.  
Note also that $H^1(A_{\hdot}\otimes S)=S/(x)$, and so $\wR^1(A)=\{0\}$, 
which differs from $\wR_1(A)$.

Finally, let $A'$ be the sub-$\cdga$ generated by $a$. Clearly, 
the inclusion map, $\iota\colon A'\inj A$, induces an isomorphism 
in cohomology. Nevertheless, $\RR^1(A')=\{0\}$, and so the resonance 
varieties of $A$ and $A'$ differ, although $A$ and $A'$ are quasi-isomorphic. 
\hfill $\Diamond$
\end{exm}

\begin{prb}
\label{prb:translated}
Can the resonance varieties of a $\cdga$ have positive-dimen\-sional 
irreducible components not passing through $0$?
\end{prb}

\section{The resonance varieties of a space}
\label{sect:models}

There are two basic types of resonance varieties that one 
can associate to a space, depending on which 
$\cdga$ is used to approximate it.  In this section, 
we discuss both types of resonance varieties, and 
several ways in which these varieties can be related.  

\subsection{The cohomology algebra}
\label{subsec:coho}

Throughout this section, $X$ will be a connected, finite-type CW-complex. 
The first approach (which has been in use since the 1990s) 
is to take the cohomology algebra 
$H^{\hdot}(X,\C)$, endowed with the zero differential, and let 
$\RR^i(X)$ be the resonance varieties of this $\cdga$. 
As indicated previously, these sets are homogeneous algebraic 
subvarieties of the affine space $H^1(X,\C)$.  

These varieties have been much studied in recent years, 
and have many practical applications, see e.g.~\cite{DPS-duke, 
PS-plms, PS-mrl, PS-crelle, Su-aspm, Su-pisa, Su-pau} 
and the references therein.  Let us just mention here two of their 
naturality properties. 

First, the resonance varieties are homotopy-type invariants.  More precisely, if 
$f\colon X\to Y$ is a homotopy equivalence, then the induced 
homomorphism $f^*\colon H^1(Y,\C)\to  H^1(X,\C)$ restricts to 
an isomorphism $f^* \colon \RR^i(Y)\isom \RR^i(X)$, for all $i\ge 0$, 
see e.g.~\cite{Su-aspm}.   

Next, if $p\colon Y\to X$ is a finite, regular cover, then the induced 
homomorphism, $p^*\colon H^1(X,\C)\inj H^1(Y,\C)$, maps 
each resonance variety $\RR^i(X)$ into $\RR^i(Y)$, 
with equality if the group of deck transformations 
acts trivially on $H^*(Y,\C)$, see e.g.~\cite{DP-pisa, Su-pau}.   

Yet the resonance varieties $\RR^i(X)$ 
do not always provide accurate enough information 
about the space $X$, since the cohomology algebra may not 
be a (rational homotopy) model for $X$.  It is thus 
important to look for alternate definitions of resonance 
in the non-formal setting.

\subsection{The Sullivan model}
\label{subsec:apl}

The second approach is to use Sullivan's model of polynomial forms, 
$\apl(X)$.  This is a rationally defined $\cdga$, whose construction 
is inspired by the de~Rham algebra of differential forms on a smooth 
manifold (see \cite{Su77}, \cite{FHT}). In particular, the cohomology algebra 
$H^{\hdot}(\apl(X))$ is isomorphic as a graded algebra to 
$H^{\hdot}(X,\C)$, via an isomorphism preserving $\Q$-structures.   
For a finite simplicial complex $K$, the model $\apl(K)$ admits a 
nice combinatorial description, closely related to the Stanley--Reisner 
ring of $K$ (see \cite{FJP}).

A connected, finite-type CW-complex $X$ is said to be {\em formal}\/ 
if its Sullivan model is formal, i.e., there is a weak equivalence 
$\apl(X) \simeq (H^{\hdot}(X,\C),0)$ preserving $\Q$-structures.  
The notion of $q$-formality of a space is defined analogously.  
Of course, if $X$ is formal, then it is $q$-formal, for all $q$. 
As a partial converse, if $X$ is $q$-formal and $\dim X \le q+1$, 
then $X$ is formal (see \cite{Ma}).  

Particularly interesting is the notion of $1$-formality.  It turns out that 
a space $X$ as above is $1$-formal if and only if its fundamental group, 
$\pi=\pi_1(X,x_0)$, is $1$-formal, that is, if the Malcev--Lie algebra of 
$\pi$ is the degree completion of a quadratic Lie algebra.  

For instance, if $H^*(X,\Q)$ is the quotient of a free $\cdga$ by 
an ideal generated by a regular sequence, then $X$ is a formal 
space (see \cite{Su77}). In particular, if $X$ has the rational cohomology 
of a torus, then $X$ is formal. For more on these formality notions, 
we refer to \cite{Ma, PS-imrn, DPS-duke, PS-formal, SW}. 

When $X$ is non-formal, the Sullivan model may have  
infinite-dimen\-sional graded pieces. In particular, the sets 
$\RR^i(\apl(X))$ are not {\it a priori}\/ algebraic sets.  
Thus, we will restrict our attention to spaces $X$ 
for which $\apl(X)$ can be replaced (up to weak equivalence) 
by a finite-type model $(A,\D)$.  

For this class of spaces, which 
includes many interesting examples of non-formal spaces, 
the resonance varieties $\RR^i(A)$  may be viewed as algebraic 
subsets of the affine space $H^1(X, \C)\cong  H^1(A)$. 

\subsection{An algebraic tangent cone theorem}
\label{subsec:alg tcone}

Before proceeding, let us briefly recall a standard notion in algebraic 
geometry.  Let $W\subset \C^n$ be a Zariski closed subset, 
defined by an ideal $I$ in the polynomial ring $S=\C[z_1,\dots ,z_n]$.   
The {\em tangent cone}\/ of $W$ at $0$ is the algebraic 
subset $\TC_0(W)\subset \C^n$ defined by the ideal 
$\init(I)\subset S$ generated by the initial forms of 
all non-zero elements from $I$.  This set is a homogeneous 
subvariety of $\C^n$, which depends only on the analytic 
germ of $W$ at zero.  In particular, 
$\TC_0(W)\ne \emptyset$ if and only if $0\in W$.  

In the previous two subsections, we associated two types 
of resonance varieties to a space $X$ having a finite-type 
model $A$.  The next  theorem, which may be viewed as 
an algebraic analogue of the Tangent Cone theorem, 
establishes a tight relationship between these two 
kinds of varieties.  

\begin{theorem}[\cite{MPPS}]
\label{thm:mpps}
Let $X$ be a finite-type CW-complex, and suppose there is a 
finite-type $\cdga$ $(A,\D)$ such that $\apl(X)\simeq A$. 
Then, for each $i\ge 0$, the tangent cone at $0$ to the 
resonance variety $\RR^i(A)$ is contained in $\RR^i(X)$.
\end{theorem}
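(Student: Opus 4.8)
The plan is to exploit the fact that, under the weak equivalence $\apl(X)\simeq A$, the cohomology algebra $H^{\hdot}(A)$ is isomorphic to $H^{\hdot}(X,\C)$, so that $\RR^i(X)$ is precisely the resonance variety of the $\cdga$ $(H^{\hdot}(A),0)$, sitting inside the same affine space $H^1(A)=H^1(X,\C)$ as $\RR^i(A)$. Since $\TC_0(\RR^i(A))$ is a homogeneous variety, it suffices to check that every nonzero $v\in\TC_0(\RR^i(A))$ lies in $\RR^i(X)$; the origin is handled separately, since $0\in\TC_0(\RR^i(A))$ forces $H^i(A)\ne 0$, whence $0\in\RR^i(X)$. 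To access a nonzero tangent vector $v$, I would invoke the arc description of the tangent cone of a complex analytic germ: there is a complex-analytic arc $\gamma\colon\{|s|<\epsilon\}\to\RR^i(A)$ with $\gamma(0)=0$ whose leading term points along $v$, say $\gamma(s)=s^m v+O(s^{m+1})$ for some $m\ge 1$.

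First I would record that, because $\gamma$ takes values in $\RR^i(A)$, one has $H^i(A,\delta_{\gamma(s)})\ne 0$ for every $s$ in the punctured disk. Evaluating the ranks of the two relevant differentials at a generic value of $s$ (where they attain their generic ranks over the field $K=\C((s))$) shows that the cohomology of the specialized complex over $K$ is already nonzero, i.e. $\dim_K H^i(A\otimes K,\delta_{\gamma(s)})\ge 1$. Next I would view $(A\otimes\C[[s]],\delta_{\gamma(s)})$ as a complex of free modules over the complete DVR $\mathcal O=\C[[s]]$ and filter it by the $s$-adic powers $F^p=A\otimes s^p\mathcal O$. Since $\gamma(s)\equiv 0\bmod s$, the associated-graded differential is $\D$, so $E_1=H^{\hdot}(A)$; as the perturbation $\gamma(s)\wedge-$ has $s$-order exactly $m$, the differentials $d_1,\dots,d_{m-1}$ vanish and $d_m$ is left multiplication by $v$ on $H^{\hdot}(A)$. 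Hence
\[
E_{m+1}^{\,p,i}\;=\;H^i\!\left(H^{\hdot}(A),\delta_v\right)
\]
for every $p$, which is exactly the cochain complex computing $\RR^i(X)$.

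Finally I would compare the two computations. The filtration is complete, exhaustive and Hausdorff, so the spectral sequence converges to the $s$-adically filtered cohomology $H^i(A\otimes\mathcal O,\delta_{\gamma(s)})$, a finitely generated $\mathcal O$-module whose free rank equals $\dim_K H^i(A\otimes K,\delta_{\gamma(s)})$. For $p\gg 0$ the torsion is annihilated, so $E_\infty^{\,p,i}$ has dimension equal to that free rank; being a subquotient of $E_{m+1}^{\,p,i}$, it satisfies $\dim_K H^i(A\otimes K)\le\dim_\C H^i(H^{\hdot}(A),\delta_v)$. Combined with $\dim_K H^i\ge 1$, this forces $H^i(H^{\hdot}(A),\delta_v)\ne 0$, i.e. $v\in\RR^i(X)$, as desired. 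The main obstacle I anticipate is precisely this last comparison: one must justify convergence of the perturbation spectral sequence over $\mathcal O$ and the identification of the free rank of the limit with the generic cohomology dimension over $K$, so that the bound by the page $E_{m+1}$ — the resonance of the cohomology algebra — is legitimate. A secondary point requiring care is the arc description of $\TC_0(\RR^i(A))$, which rests on the coincidence of the algebraic tangent cone defined by $\init(I)$ with the cone of limiting secants for complex analytic germs.
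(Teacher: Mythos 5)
Your proposal is correct in substance, but it is a genuinely different proof from the one behind the paper's citation. The paper does not reprove this statement: it quotes it from \cite{MPPS}, and the route taken there (and reassembled in \S\ref{subsec:tcone thm}) is topological --- one combines the analytic germ isomorphism $\VV^i(X)_{(1)}\cong\RR^i(A)_{(0)}$ of Theorem~\ref{thm:dp-ccm}, which gives $\TC_1(\VV^i(X))=\TC_0(\RR^i(A))$, with Libgober's inclusion $\TC_1(\VV^i(X))\subseteq\RR^i(X)$ of Theorem~\ref{thm:lib}; the realizability of $A$ by the space $X$ is used in an essential way. Your argument is instead a direct transplant of Libgober's degeneration argument to the $\cdga$ itself: an analytic arc in $\RR^i(A)$ tangent to $v$, the $s$-adic filtration on $(A\otimes\C[[s]],\delta_{\gamma(s)})$ whose $E_1$-page is $H^{\hdot}(A)$ and whose first nonzero differential $d_m$ is $\delta_v$, and a semicontinuity comparison between the generic fibre over $\C((s))$ and the $E_{m+1}$-page. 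The two points you flag do hold: the arc description of $\TC_0$ of a complex analytic germ is Whitney's theorem that the algebraic tangent cone equals the cone of tangent directions of holomorphic arcs, and strong convergence of the spectral sequence of a complex of finitely generated free $\C[[s]]$-modules with the $s$-adic filtration follows from Artin--Rees (the pages are finite-dimensional, so Boardman's criterion is automatic, and $E_\infty^{p,q}\cong s^pH^{p+q}/s^{p+1}H^{p+q}$ has dimension equal to the free rank once $p$ exceeds the torsion exponents). Two small repairs: the identification $E_{m+1}^{p,q}\cong H^{p+q}(H^{\hdot}(A),\delta_v)$ is valid only for $p\ge m$ (for smaller $p$ there is no incoming $d_m$), which is harmless since you work with $p\gg 0$; and the computation of $d_m$ as left multiplication by $v$ uses $H^1(A)=Z^1(A)$, i.e.\ the connectivity hypothesis $A^0=\C$.

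What your approach buys --- and what should give you pause --- is that, apart from the bookkeeping identification $H^{\hdot}(X,\C)\cong H^{\hdot}(A)$, the space $X$ never enters: the argument would prove the purely algebraic inclusion $\TC_0(\RR^i(A))\subseteq\RR^i(H^{\hdot}(A))$ for an arbitrary connected finite-type $\cdga$, which is precisely Problem~\ref{prob:alg tcone}, posed as open in this paper. Before claiming that, write out the convergence step in full (the Artin--Rees computation of $Z_\infty^{p,q}$ and $B_\infty^{p,q}$) and sanity-check the conclusion against the examples where $\RR^i(A)$ and $\RR^i(H^{\hdot}(A))$ genuinely differ, such as Examples~\ref{ex:nonhomog}, \ref{ex:heis}, and \ref{ex:conf torus}; in all of these the asserted inclusion of tangent cones is consistent with your argument.
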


As we shall see in Example \ref{ex:heis} below, the inclusion 
$\TC_0(\RR^i(A))\subseteq \RR^i(X)$ may well be strict.  

It seems natural to ask whether one can dispense in the above 
theorem with the hypothesis that the $\cdga$ $(A,\D)$ be realized 
by a space $X$, and distill a purely algebraic statement from it.

\begin{prb}
\label{prob:alg tcone}
Let $(A,\D)$ be a finite-type $\cdga$.  For each $i\ge 0$, determine 
whether the tangent cone at $0$ to $\RR^i(A)$ is contained 
in $\RR^i(H^{\hdot}(A))$. 
\end{prb}

\subsection{Positive weights}
\label{eq:poswt}
Under some additional hypothesis on the $\cdga$ under consideration, 
one can say more about the nature of its resonance varieties. 

Following Sullivan \cite{Su77} and Morgan \cite{Mo}, we say that 
a rationally defined $\cdga$ $(A,\D)$ has {\em positive weights}\/ 
if each graded piece can be decomposed into weighted pieces, 
with positive weights in degree $1$, and in a manner compatible 
with the $\cdga$ structure.  That is, 
\begin{enumerate}
\item For each $i\ge 0$, there is a vector space decomposition, 
$A^i=\bigoplus_{\alpha\in \Z} A^i_\alpha$.
\item 
$A^1_\alpha=0$, for all $\alpha\le 0$.
\item If $a\in A^i_\alpha$ and $b\in A^j_\beta$, 
then $ab\in A^{i+j}_{\alpha+\beta}$ and $\D{a}\in A^{i+1}_{\alpha}$. 
\end{enumerate}

A space $X$ is said to have positive weights if its Sullivan model does.  
If $X$ is formal, then $X$ does have positive weights: simply set the 
weight of a cohomology class in $A=H^{\hdot}(X,\C)$ equal to its degree.  
On the other hand, as we shall see in \S\S\ref{sect:qproj}--\ref{sect:elliptic}, 
the converse is far from true, even when $X$ is a smooth, complex 
algebraic variety. 

The existence of positive weights on a $\cdga$ model $A$ for 
$X$ imposes stringent conditions on the resonance varieties 
of $A$, and leads to an even tighter relationship between the 
resonance varieties of the space and its model. 

\begin{theorem}[\cite{DP-ccm, MPPS}]
\label{thm:mpps-bis}
Let $X$ be finite-type CW-complex, and suppose there is a 
rationally defined, finite-type $\cdga$ $(A,\D)$ with positive weights, 
and a $q$-equivalence between $\apl(X)$ and $A$ preserving $\Q$-structures.  
Then, for each $i\le q$, 
\begin{enumerate}
\item \label{r1} $\RR^i(A)$ is a finite union of rationally defined 
linear subspaces of $H^1(A)$.
\item \label{r2}  $\RR^i(A) \subseteq \RR^i(X)$.
\end{enumerate}
\end{theorem}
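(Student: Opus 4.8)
The plan is to exploit the positive-weights decomposition to show that, after a suitable rescaling, the resonance variety $\RR^i(A)$ is forced to be a cone, and then to combine this homogeneity with the algebraic Tangent Cone theorem (Theorem \ref{thm:mpps}) to upgrade the inclusion of tangent cones into a genuine inclusion of varieties. The positive weights furnish a $\C^*$-action on each $A^i$, where $\lambda\in\C^*$ acts on the weight-$\alpha$ piece $A^i_\alpha$ by multiplication by $\lambda^\alpha$. First I would record that, by condition (3), this action is compatible with both the multiplication and the differential $\D$; in particular it descends to a $\C^*$-action on cohomology, and on $H^1(A)$ it acts with strictly positive weights by condition (2). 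The key calculation is to track how the twisted differential $\delta_a$ transforms under this action: for $a\in H^1(A)$ and $\lambda\in\C^*$, the rescaling intertwines $(A^{\hdot},\delta_a)$ with $(A^{\hdot},\delta_{\lambda\cdot a})$ up to the grading shift coming from $\D$, so that the isomorphism type of the twisted cochain complex is preserved under the $\C^*$-flow. This will show that $\RR^i(A)$ is stable under the weighted $\C^*$-action, hence is a union of (weighted-)homogeneous pieces.

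The subtle point is that, because the weights in degree $1$ may not all be equal, the variety $\RR^i(A)$ is a priori only \emph{weighted}-homogeneous, not homogeneous in the ordinary sense. To obtain part \eqref{r1} I would invoke the rational definability together with a structural result: a weighted-homogeneous subvariety that is cut out by the vanishing of ranks of the specialized differentials, in a $\cdga$ with positive weights and a $\Q$-structure, must in fact be a finite union of \emph{linear} subspaces defined over $\Q$. The mechanism here is that the defining equations, being quasi-homogeneous and arising from a rationally defined complex, have initial forms that are themselves linear; one argues that each irreducible component is a linear subspace passing through the origin (this is where positivity of the weights in degree $1$ is essential, since it guarantees the $\C^*$-action contracts everything to $0$ and so every component is a cone through the origin). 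The rationality of the linear subspaces follows from the compatibility of the $\Q$-structure with the decomposition.

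For part \eqref{r2}, the observation is that once $\RR^i(A)$ is known (by part \eqref{r1}) to be a finite union of linear subspaces through the origin, it coincides with its own tangent cone at $0$, i.e. $\TC_0(\RR^i(A))=\RR^i(A)$. Applying Theorem \ref{thm:mpps} in the range $i\le q$ --- where Lemma \ref{lem:functoriality} guarantees that the $q$-equivalence identifies $\RR^i(A)$ with the resonance of any other finite-type model and is compatible with the inclusion $\TC_0(\RR^i(A))\subseteq\RR^i(X)$ --- then yields $\RR^i(A)=\TC_0(\RR^i(A))\subseteq\RR^i(X)$, as desired. The main obstacle I anticipate is part \eqref{r1}, and specifically the passage from weighted-homogeneity to \emph{linearity} of the components: the $\C^*$-action by itself only shows that $\RR^i(A)$ is a cone, and extracting that each component is actually a linear subspace (rather than a more general weighted-homogeneous cone) requires carefully analyzing the initial ideal and using that the entries of the specialized differentials are, modulo the weight filtration, genuinely linear forms. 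This is the step where the hypotheses (positive weights in degree $1$ together with the $\Q$-structure) must all be brought to bear simultaneously, and I would expect to lean on the detailed arguments of \cite{DP-ccm, MPPS} rather than reproduce them from scratch.
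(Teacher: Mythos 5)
The paper does not actually prove this theorem---it is quoted from \cite{DP-ccm, MPPS}---so I am judging your argument on its own terms and against the logical scaffolding the paper builds around the statement. Your computation that the weighted $\C^*$-action $\sigma_\lambda$ satisfies $\sigma_\lambda\circ\delta_a=\delta_{\sigma_\lambda(a)}\circ\sigma_\lambda$ is correct (since $\D$ preserves the weight exactly, there is no ``grading shift'' to worry about), and it does show that $\RR^i(A)$ is invariant under a $\C^*$-action acting with strictly positive weights on $H^1(A)$, hence is a union of weighted cones through $0$; your derivation of part \eqref{r2} from part \eqref{r1} is also fine. The genuine gap is in part \eqref{r1}, exactly where you flag it, and the mechanism you propose cannot close it: your argument for \eqref{r1} is purely algebraic (it uses only the $\C^*$-action, the $\Q$-structure, and the shape of the equations of $\RR^i(A)$) and never invokes the hypothesis that $A$ is $q$-equivalent to $\apl(X)$. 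A decisive counterexample to the ``structural result'' you invoke is $A=(H^{\hdot}(\Conf(E^*,2),\C),0)$ from Example \ref{ex:conf torus}: this is a finite-type, rationally defined $\cdga$ with positive weights (weight equals degree) and zero differential, so its degree-one resonance variety is genuinely homogeneous, defined over $\Q$, and cut out by minors of a matrix of honest linear forms---yet $\RR^1(A)=\{x_1y_2-x_2y_1=0\}$ is an irreducible quadric, not a finite union of linear subspaces. The only hypothesis of the theorem this $\cdga$ fails is being a model of the space whose cohomology it is (that space being non-formal). So the topological hypothesis must enter the proof of part \eqref{r1} in an essential way, and your proof has no place for it; the paper's Problem \ref{quest:pos weights} underscores that the algebra-only shortcut is not available.

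The route actually taken in \cite{DP-ccm, MPPS} goes through the characteristic varieties. Theorem \ref{thm:dp-ccm} identifies the germ of $\RR^i(A)$ at $0$ with the germ of $\VV^i(X)$ at $1$ via the exponential map; the positive-weight $\C^*$-action shows that $\RR^i(A)$ is recovered from this germ by flowing outward; an analytic-continuation argument along the orbits $\lambda\mapsto\exp(\sigma_\lambda(a))$ then identifies $\RR^i(A)$ with the exponential tangent cone $\tau_{1}(\VV^i(X))$ of \eqref{eq:tau1}; and the linearity and rationality asserted in part \eqref{r1} come from the general structure theorem for exponential tangent cones (\cite{DPS-duke, Su-imrn}), not from any analysis of initial forms of the defining equations of $\RR^i(A)$. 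Part \eqref{r2} then follows either as you describe, or directly from the chain $\tau_{1}(\VV^i(X))\subseteq\TC_{1}(\VV^i(X))\subseteq\RR^i(X)$ of Corollary \ref{cor:tcone inc}.
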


Once again, it seems natural to ask whether one can dispense 
with the hypothesis that $(A,\D)$ be a model for a finite-type 
CW-complex $X$.  

\begin{prb}
\label{quest:pos weights}
Let $(A,\D)$ be a finite-type $\cdga$ with positive weights. For each 
$i\ge 0$, determine whether $\RR^i(A)$ is contained in $\RR^i(H^{\hdot}(A))$,  
and whether $\RR^i(A)$ is a finite union of rationally defined linear subspaces.
\end{prb}

\begin{exm}
\label{ex:heis}
Let $X$ be the $3$-dimensional Heisenberg nilmanifold, i.e., 
the circle bundle over the torus, with Euler number $1$. Then 
$H^1(X,\C)=\C^2$, and all cup products of degree $1$ classes  
vanish; thus, $\RR^1(X)=H^1(X,\C)$.  

On the other hand, $X$ admits as a model $(A,\D)$ the exterior 
algebra on generators $a,b,c$ in degree $1$, 
with differential $\D{a}=\D{b}=0$ and $\D{c}=a\wedge b$.   
Clearly, this is a finite-dimensional model, with positive weights: 
simply assign weight $1$ to $a$ and $b$, and weight $2$ to $c$. 

Writing $S=\C[x,y]$, the chain complex \eqref{eq:hom koszul} 
takes the form 
\begin{equation}
\label{eq:heis}
\xymatrixcolsep{25pt}
A_{\hdot}\otimes S \colon 
\xymatrix{
\cdots \ar[r] & S^3 \ar^(.5){
\left(\begin{smallmatrix} 
y & 0 & 0\\[2pt]
-x & 0 & 0 \\[2pt]
1 & -x & -y \end{smallmatrix}\right)
}[rr] 
&&S^3 \ar^(.5){
\left(\begin{smallmatrix} x\, & y\, & 0\end{smallmatrix}\right)
}[rr] 
&&S }.
\end{equation}
It follows that $H_1(A_{\hdot}\otimes S)=S/(x,y)$, and so 
$\RR^1(A)=\{0\}$, a proper subset of $\RR^1(X)=\C^2$.
\hfill $\Diamond$
\end{exm}

\section{Characteristic varieties}
\label{sect:charvar}

We now turn to another type of homological jump loci associated to 
a space: the characteristic varieties, which keep track of jumps in 
the homology with coefficients in rank $1$ local systems. Closely 
related objects are the support loci for the Alexander modules. 

\subsection{Homology jump loci for  rank $1$ local systems}
\label{subsec:cv}

As before, let $X$ be a finite-type,  connected CW-complex.  Fix 
a basepoint $x_0$, and let $\pi=\pi_1(X,x_0)$ be its fundamental group. 
Finally, let $\Char(X)=\Hom(\pi,\C^*)$ 
be the algebraic group of complex-valued, multiplicative characters 
on $\pi$, with identity $1$ corresponding to the trivial representation. 
The identity component of this group, $\Char(X)^{0}$, is an algebraic 
torus of dimension $n=b_1(X)$; the other components are translates 
of this torus by characters corresponding to the torsion subgroup 
of $\pi_{\ab}=H_1(X,\Z)$. 

For each character $\rho\colon \pi\to \C^*$, let 
$\C_{\rho}$ be the corresponding rank $1$ local system on $X$.  
The {\em characteristic varieties}\/ of $X$ are the 
jump loci for homology with coefficients in such local systems, 
\begin{equation}
\label{eq:cvx}
\VV_i(X)= \{\rho \in \Char(X) \mid  H_i(X, \C_{\rho}) \ne 0\}.
\end{equation}

In more detail, let $X^{\ab}\to X$ be the 
maximal abelian cover, with group of deck transformations 
$\pi_{\ab}$.  Upon lifting the cell structure of $X$ 
to this cover, we obtain a chain complex of $\C[\pi_{\ab}]$-modules, 
\begin{equation}
\label{eq:equiv cc}
\xymatrixcolsep{20pt}
\xymatrix{\cdots \ar[r]& 
C_{i+1}(X^{\ab},\C) \ar^(.53){\partial^{\ab}_{i+1}}[r] & 
 C_{i}(X^{\ab},\C) \ar^(.45){\partial^{\ab}_{i}}[r] & 
  C_{i-1}(X^{\ab},\C) \ar[r] & \cdots  
}.
\end{equation}

Tensoring this chain complex with the $\C[\pi_{\ab}]$-module 
$\C_{\rho}$, we obtain a chain complex of $\C$-vector spaces,
\begin{equation}
\label{eq:eval cc}
\xymatrixcolsep{14pt}
\xymatrix{\cdots \ar[r]& C_{i+1}(X,\C_{\rho}) 
\ar^(.52){\partial^{\ab}_{i+1}(\rho)}[rr] &&
C_{i}(X,\C_{\rho}) \ar^(.47){\partial^{\ab}_{i}(\rho)}[rr] &&
C_{i-1}(X,\C_{\rho}) \ar[r] & \cdots  
},
\end{equation}
where the evaluation of $\partial^{\ab}_i$ at $\rho$ is obtained by applying 
the ring morphism $\C[\pi]\to \C$, $ g\mapsto \rho(g)$ to each entry.  
Taking homology in degree $i$ of this chain complex, we obtain the 
twisted homology groups $H_i(X, \C_{\rho})$, whose jumps in 
dimension the variety $\VV_i(X)$ keeps track of. 

In a similar fashion, we may define the cohomology jump loci 
$\VV^i(X)$ by the condition $H^i(X, \C_{\rho}) \ne 0$. 
Note that $H^i(X, \C_{\rho})\cong H_i(X, \C_{\rho^{-1}})$; thus, 
the inversion automorphism $\rho\mapsto \rho^{-1}$ of the 
character group of $X$ identifies $\VV^i(X)$ with $\VV_i(X)$, 
for each $i\ge 0$. 

\subsection{Some properties of the characteristic varieties}
\label{subsec:cv prop}

The sets $\VV_i(X)$ are algebraic subsets of the  character  
group $\Char(X)$.   Clearly, $1\in \VV_i(X)$ if and only if the 
$i$-th Betti number $b_i(X)$ is non-zero. 
In degree $0$, we have $\VV_0(X)= \{ 1 \}$. 
In degree $1$, the variety $\VV_1(X)$ depends only 
on the fundamental group $\pi=\pi_1(X,x_0)$---in fact, only on 
its maximal metabelian quotient, $\pi/\pi''$---%
so we shall sometimes denote it as $\VV_1(\pi)$.  

The characteristic varieties are homotopy-type invariants 
of our space.  More precisely, if $f\colon X\to Y$ is 
a homotopy equivalence, then the induced morphism between 
character group, $f^*\colon \Char(Y)\to \Char(X)$, restrict to 
an isomorphism $f^* \colon \VV_i(Y)\isom \VV_i(X)$, for all $i\ge 0$;  
see \cite{Su-imrn} for more details.

If $p\colon Y\to X$ is a finite, regular cover, then the induced 
morphism between character groups, $p^*\colon \Char(X)\inj \Char(Y)$, 
maps each characteristic variety $\VV_i(X)$ into $\VV_i(Y)$; 
see \cite{DP-pisa, Su-pau} for details.  

As noted in \cite{PS-plms}, the characteristic 
varieties behave well under finite 
direct products. More precisely, let $X_1$ and $X_2$ 
be two connected, finite-type CW-complexes. 
Identifying the character group of the product $X=X_1\times X_2$ with 
$\Char(X_1)\times \Char(X_2)$, we have 
\begin{equation}
\label{eq:cvprod}
\VV_i(X_1\times X_2)=\bigcup_{p+q=i}
\VV_p(X_1)\times \VV_q(X_2).
\end{equation}

The proof of this formula is straightforward:   For each 
character $\rho=(\rho_1,\rho_2)\in \Char(X)$, the chain complex   
$C_{\hdot}(X,\C_{\rho})$ decomposes as the tensor product 
of the chain complexes $C_{\hdot}(X_1,\C_{\rho_1})$ 
and $C_{\hdot}(X_2,\C_{\rho_2})$. 
Taking homology, we see that $H_i(X,\C_{\rho})=\bigoplus_{p+q=i} 
H_{p}(X_1,\C_{\rho_1}) \otimes_{\C} H_{q}(X_2,\C_{\rho_2})$, 
and the claim follows. 

\subsection{Alexander varieties}
\label{subsec:alexvars}

An alternative approach, going back to the definition 
of the Alexander polynomials of knots and links \cite{Al}, 
uses the homology modules of the universal 
abelian cover of our space $X$.   As before, 
let $\pi=\pi_1(X)$, and let 
\begin{equation}
\label{eq:alexinv}
H_i(X^{\ab},\C)=H_1(X, \C[\pi_{\ab}])
\end{equation} 
be the homology groups of the chain complex \eqref{eq:equiv cc}. 
These {\em Alexander invariants}\/ are in a natural way modules 
over the group ring $\C[\pi_{\ab}]$.  Identifying this commutative, 
Noetherian ring with the coordinate ring of the character group 
of $\pi$, we let 
\begin{equation}
\label{eq:alexvar}
\wV_i(X)= \supp ( H_i(X^{\ab},\C) ) 
\end{equation}
be the subvariety of $\Char(X)$ defined by the annihilator 
ideal of the respective homology module. 
One may also consider the cohomology modules $H^i(X^{\ab},\C)$ and 
their support varieties, $\wV^i(X)$; we will not pursue this  
approach here, but refer instead to \cite{LM} for details. 

As shown in \cite{PS-plms, PS-mrl}, the characteristic varieties  
and their homology support loci counterparts are 
related in the following way:
\begin{equation}
\label{eq:cval}
\bigcup_{i\le q} \VV_i(X)= \bigcup_{i\le q} \wV_i(X). 
\end{equation}

Of special interest is the first characteristic variety, 
$\VV_1(\pi)=\VV_1(X)$.   Suppose $\pi$ admits a 
finite presentation, say, 
$F/R=\langle x_1,\dots , x_n \mid r_1,\dots , r_m\rangle$, 
and let $\phi\colon F\surj \pi$ 
be the presenting homomorphism. 
Let $\partial_j r_i\in \Z[F]$ be the Fox derivatives of the relators,  
and let $\partial^{\ab}_2=(\partial_j r_i)^{\ab \circ \phi}$ be the 
corresponding Alexander matrix, with entries in $\Z[\pi_{\ab}]$.  
It follows from \eqref{eq:cval} that $\VV_1(\pi)$ 
coincides (at least away from $1$) with 
the zero locus of the ideal $E_1(\pi)$ of codimension $1$ 
minors of $\partial^{\ab}_2$, a result due to E.~Hironaka \cite{Hi97}. 

As shown in \cite{MS}, the first resonance variety $\RR_1(\pi)$ 
admits a similar description, at least when $\pi$ is a commutator-relators 
group, i.e., when all the relators $r_i$ belong to the commutator subgroup 
$[F,F]$. In this case, $\RR_1(\pi)$ is the zero locus of the codimension 
$1$ minors of the `linearized' Alexander matrix, 
$(\partial^{\ab}_2)^{\operatorname{lin}}$, which is 
the $m$ by $n$ matrix over the polynomial ring $S=\Z[y_1,\dots, y_n]$
with $ij$-entries equal to $\sum_{k=1}^{n}\epsilon(\partial_k\partial_j r_i) y_k$, 
where $\epsilon \colon \Z[F]\to \Z$ is the augmentation map.

\begin{rem}
\label{rem:univ}
The characteristic varieties can be arbitrarily complicated. 
For instance, let $f\in \Z[t_1^{\pm 1},\dots , t_n^{\pm 1}]$ be 
an integral Laurent polynomial. Then, as shown in \cite{SYZ}, there is a 
finitely presented group $\pi$ with $\pi_{\ab}=\Z^n$ and 
$\VV_1(\pi)=V(f)  \cup \{1\}$.  

More generally, let $Z$ be an algebraic subset of $(\C^*)^n$, defined 
over $\Z$, and let $k$ be a positive integer.  Then, as shown in 
\cite{Wa}, there is a finite, connected CW-complex $X$ with 
$\Char(X)=(\C^*)^n$ such that $\VV_i(X)=\{1\}$ for $i<q$ and 
$\VV_q(X)=Z  \cup \{1\}$. 
\hfill $\Diamond$
\end{rem}

\section{The tangent cone theorem}
\label{sect:tcone}

We are now ready to state a key result in the theory 
of cohomology jump loci: given a space $X$, and an algebraic 
model $A$ with good finiteness properties, the characteristic 
varieties $\VV^i(X)=\VV_i(X)$ may be identified around the identity with the 
resonance varieties $\RR^i(A)$. The resulting Tangent Cone 
theorem imposes strong restrictions on the nature of the resonance 
varieties $\RR^i(X)$ of a formal space $X$. 

\subsection{Two types of tangent cones}
\label{subsec:exp tc}

We start by reviewing two constructions which yield approximations  
to a subvariety $W$ of a complex algebraic torus $(\C^*)^n$. 
The first one is the classical tangent cone, a variant 
of the construction described in \S\ref{subsec:alg tcone}, 
while the second one is the exponential tangent cone, 
a construction first introduced in \cite{DPS-duke} and further 
studied in \cite{Su-imrn, SYZ}. 

Let $I$ be an ideal in the Laurent polynomial ring   
$\C[t_1^{\pm 1},\dots , t_n^{\pm 1}]$ such that $W=V(I)$.   
Picking a finite generating set for $I$, and multiplying 
these generators with suitable monomials if necessary, 
we see that $W$ may also be defined by the ideal $I\cap R$ 
in the polynomial ring $R=\C[t_1,\dots,t_n]$.  
Let $J$ be the ideal  in the polynomial ring 
$S=\C[z_1,\dots, z_n]$, generated by the polynomials 
$g(z_1,\dots, z_n)=f(z_1+1, \dots , z_n+1)$, 
for all $f\in I\cap R$. 

The {\em tangent cone}\/ of $W$ at $1$ is the algebraic 
subset $\TC_1(W)\subset \C^n$ defined by the ideal 
$\init(J)\subset S$ generated by the initial forms of 
all non-zero elements from $J$.  As before, the set 
$\TC_1(W)$ is a homogeneous subvariety of $\C^n$, 
which depends only on the analytic germ of $W$ at 
the identity.  In particular, $\TC_1(W)\ne \emptyset$ 
if and only if $1\in W$.  Moreover, $\TC_1$ commutes 
with finite unions.
  
On the other hand, the {\em exponential tangent cone}\/ 
to $W$ at the origin is the set  
\begin{equation}
\label{eq:tau1}
\tau_{1}(W)= \{ z\in \C^n \mid \exp(\lambda z)\in W,\ 
\text{for all $\lambda \in \C$} \}.
\end{equation}
As shown in \cite{DPS-duke, Su-imrn}, this set  
is a finite union of rationally defined linear subspaces of the affine 
space $\C^n$.  An alternative interpretation of this construction is 
given in \cite[\S6.3]{SYZ}.

It is readily seen that $\tau_1$ commutes with finite unions and 
arbitrary intersections. Clearly, the exponential tangent cone of 
$W$ only depends on the analytic germ of $W$ at the identity 
$1\in (\C^{*})^n$.  In particular, $\tau_1(W)\ne \emptyset$ 
if and only if $1\in W$. 

\begin{exm}
\label{ex:tau1 torus}
Suppose $W$ is an algebraic subtorus of $(\C^{*})^n$.  
Then  $\tau_1(W)$ equals $\TC_1(W)$, and both coincide with $T_1(W)$, 
the tangent space at the identity to the Lie group $W$. 
\hfill $\Diamond$
\end{exm}

More generally, there is always an inclusion between the two 
types of tangent cones associated to an  algebraic subset 
$W\subset (\C^{*})^n$, namely, 
\begin{equation}
\label{eq:ttinc}
\tau_{1}(W)\subseteq \TC_{1}(W). 
\end{equation}
But, as we shall see in several examples spread through this paper, 
this inclusion is far from being an equality for an arbitrary $W$. 
For instance, the tangent cone $\TC_1(W)$ may 
be a non-linear, irreducible subvariety of $\C^n$, or $\TC_1(W)$ 
may be a linear space containing the exponential tangent cone 
$\tau_{1}(W)$ as a union of proper linear subspaces.
   
\subsection{Germs of jump loci}
\label{subsec:germs}

As before, let $X$ be a connected, finite-type CW-complex.  Recall that, for 
each $i\ge 0$, we have a characteristic variety $\VV^i(X)$ inside 
the abelian, complex algebraic group $\Char(X)$.  Furthermore, 
the identity component of this algebraic group, $\Char(X)^{0}$, 
is isomorphic to $(\C^*)^n$, where $n=b_1(X)$.  

Now suppose we have a finite-type $\cdga$ model $(A,\D)$ 
for our space $X$.   Then, for each $i\ge 0$, we have a resonance variety 
$\RR^i(A)$ inside the affine space $H^1(A)=H^1(X,\C)$.  Furthermore,  
this affine space may be identified with $\C^n$, the tangent space at $1$ 
to $(\C^*)^n$.  

The next result, due to Dimca and Papadima \cite{DP-ccm}, relates the 
two types of cohomology jump loci around the origins of the respective 
ambient spaces.

\begin{theorem}[\cite{DP-ccm}]  
\label{thm:dp-ccm}
Suppose Sullivan's model $\apl(X)$ is $q$-equivalent to 
a finite-type $\cdga$ $(A,\D)$.  Then, for all $i\le q$, 
the germ at $1$ of $ \VV^i (X)$ is isomorphic to
the germ at $0$ of $ \RR^i (A)$. 
\end{theorem}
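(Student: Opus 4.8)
The plan is to realize both germs as invariants of a single complex of free modules over a common complete local ring, obtained by completing at the relevant origin, and then to show that the model hypothesis forces the two complexes to be quasi-isomorphic in degrees up to $q$. The starting observation is that each germ is intrinsic to a complex of free modules. On the topological side, lifting the cells of $X$ to the universal abelian cover gives the equivariant chain complex $C_{\hdot}(X^{\ab},\C)$ of free $\C[\pi_{\ab}]$-modules from \eqref{eq:equiv cc}; since $H_i(X,\C_\rho)$ is the homology of the specialization of this complex at $\rho$, the jump locus $\VV^i(X)=\VV_i(X)$, and hence its germ at $1$, is determined by the minors of the boundary maps, i.e.\ by the complex after completing $\C[\pi_{\ab}]$ at the maximal ideal $\m_1$ of the identity. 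On the algebraic side, the generalized Koszul complex $(A_{\hdot}\otimes S,\partial)$ of \eqref{eq:hom koszul} plays the same role: by Lemma \ref{lem:two aom} (and its homological counterpart) its specialization at $\alpha$ is $(A_{\hdot},\partial^{\alpha})$, so the germ of $\RR^i(A)=\RR_i(A)$ at $0$ (using Lemma \ref{lem:resdual}) is determined by this complex after completing $S$ at $\m_0$.

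The two ambient rings are matched by the exponential map. As $\Char(X)^0\cong(\C^*)^n$ with $n=b_1(X)$, and $H^1(A)\cong\C^n$ is its tangent space at $1$, the substitution $x_j=\log t_j$ induces an isomorphism of complete local rings $\widehat{\C[\pi_{\ab}]}_{\m_1}\cong\C[[x_1,\dots,x_n]]\cong\widehat{S}_{\m_0}$, under which the germ at $1$ corresponds to the germ at $0$. The theorem therefore reduces to exhibiting a quasi-isomorphism, in degrees $i\le q$, between the completed equivariant complex and the completed Koszul complex $A_{\hdot}\otimes\widehat{S}$, both viewed as bounded-below complexes of finitely generated free modules over this power series ring. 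Once such a quasi-isomorphism is in hand, it is automatically a chain homotopy equivalence (the modules being free), and chain homotopy equivalences survive every specialization $-\otimes\C_\rho$; hence the two families of specialized homology groups vanish on the same set, matching the germs in each degree $i\le q$.

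The heart of the matter, and the step I expect to be the main obstacle, is producing this comparison over the formal parameter; this is exactly where the equivalence $\apl(X)\simeq_q A$ enters. The natural tool is the $\m$-adic filtration $F^p=A_{\hdot}\otimes\m^p$, together with the matching augmentation-adic filtration on $\C[\pi_{\ab}]$. Because the part of each differential that multiplies by a variable $x_j$ raises filtration degree, the associated graded differential on either side reduces to the one induced by the internal differential alone ($\D$ on the algebraic side). The resulting $E_1$ pages are $H_{\hdot}(A)\otimes\gr\widehat{S}$ and $H_{\hdot}(X;\C)\otimes\gr$ respectively, and in each case the first spectral-sequence differential $d_1$ is the Aomoto differential determined by cup product. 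Since a $\cdga$ quasi-isomorphism induces an isomorphism of cohomology \emph{algebras}, the model identification yields an isomorphism of the pages $(E_1,d_1)$ in degrees $\le q$; and because the filtrations are complete, exhaustive, and have degreewise finitely generated associated graded, the spectral sequences converge and this isomorphism propagates to a quasi-isomorphism of the completed complexes.

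The delicate points I would have to treat with care are three. First, constructing an actual chain-level comparison map, rather than merely matching $E_1$ pages, that realizes the model on the equivariant complex; this is the genuinely technical core, handled in \cite{DP-ccm} via the bar construction and the twisted de~Rham comparison for the Sullivan model. Second, the degree-$q$ boundary, where the $q$-equivalence only furnishes a monomorphism on $H^{q+1}$ (cf.\ Lemma \ref{lem:functoriality}), so the homotopy equivalence must be established through degree $q$ by truncation. Third, the bookkeeping needed to pass between the finite-type model $A$ and $\apl(X)$, which is of infinite type, so that the completion and convergence arguments must be controlled degree by degree.
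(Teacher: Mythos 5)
The paper does not prove Theorem \ref{thm:dp-ccm}; it is quoted from \cite{DP-ccm}, so your attempt can only be measured against that source and against internal consistency. Your reduction is the right one: both germs are governed by complexes of finitely generated free modules over local rings (the equivariant complex \eqref{eq:equiv cc} over $\C[\pi_{\ab}]$ completed at $\m_1$, and $A_{\hdot}\otimes S$ from \eqref{eq:hom koszul} completed at $\m_0$), the exponential substitution matches the two complete local rings, and a chain homotopy equivalence between the completed complexes through degree $q$ would identify the germs. The gap is the step that is supposed to produce that equivalence. An abstract isomorphism of the pages $(E_1,d_1)$ of two filtered complexes propagates to nothing: the comparison theorem requires a \emph{map of filtered complexes} inducing that isomorphism, and without a chain-level map the higher differentials $d_r$, $r\ge 2$ --- which encode precisely the Massey-product data distinguishing a non-formal model from its cohomology ring --- are unconstrained. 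As written, your argument uses only the induced isomorphism of cohomology algebras $H^{\hdot}(A)\cong H^{\hdot}(X,\C)$, so it would apply verbatim with $A$ replaced by $(H^{\hdot}(X,\C),0)$ and would prove that \emph{every} finite-type CW-complex satisfies the tangent cone formula of Theorem \ref{thm:tcone}. That is false, and the paper contains the counterexample: for the Heisenberg nilmanifold of Example \ref{ex:heis}, the model $A$ and the cohomology ring have identical $(E_1,d_1)$ pages in your filtration (the Aomoto differential of the cup product, which vanishes there), yet $\RR^1(A)=\{0\}$ while $\RR^1(X)=\C^2$, and $\VV^1(X)=\{1\}$ has point germ at $1$. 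So the ``propagation'' sentence is exactly where the theorem lives, and it fails.

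What must be supplied is a chain-level comparison, through degree $q$, between the completed equivariant complex and $A_{\hdot}\otimes\widehat{S}$, built from the full zig-zag of $\cdga$ $q$-quasi-isomorphisms $\apl(X)\simeq_q A$ rather than from the induced map on cohomology; this is where the hypothesis that $A$ is a \emph{model} (and not merely a $\cdga$ with the right cohomology ring) enters. In \cite{DP-ccm} this is carried out in the language of deformation theory of flat connections and rank-one local systems --- a jump-locus refinement of Goldman--Millson theory --- rather than by a filtered map on the cellular equivariant complex. You do flag the chain-level map as ``the genuinely technical core'' and defer it to the reference, but since everything else in your outline is routine, that deferral leaves the proposal without a proof of the one step that cannot be recovered from $H^{\hdot}(X,\C)$ alone. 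A smaller point to repair in any final write-up: after completing, there are no specializations $-\otimes\C_\rho$ at $\rho\ne 1$ left to apply; the correct mechanism is that the germ at the closed point of a jump locus is determined by the determinantal ideals of the differentials in the completed local ring (completion being faithfully flat), together with an argument, as in Theorem \ref{thm:res compare}, that these germs are invariant under chain homotopy equivalence.
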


It is important to note that all the above isomorphisms are induced by an 
analytic isomorphism $\Char(X)^{0}_{(1)} \cong H^1(A)_{(0)}$, the inverse     
of which is obtained by suitably restricting the exponential map 
$\exp\colon \C^n \to (\C^*)^n$. 

Theorem \ref{thm:dp-ccm} shows that, at least around the origin, the 
resonance varieties of a finite-type $\cdga$ model for $X$ depend only on 
the characteristic varieties of $X$, and thus, only on the homotopy 
type of $X$. This observation leads to the following corollary.  

\begin{corollary}
\label{cor:resvar inv}
Let $X$ be finite-type CW-complex. Suppose $(A,\D)$ 
and $(A',\D')$ are two finite-type $\cdga$s, both $q$-equivalent 
to the Sullivan model $\apl(X)$, for some $q\ge 1$. There is 
then an isomorphism $H^1(A)\cong H^1(A')$ restricting to 
isomorphisms $\RR^i(A)_{(0)}\cong \RR^i(A')_{(0)}$, for all $i\le q$.  
\end{corollary}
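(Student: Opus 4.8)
The plan is to apply Theorem \ref{thm:dp-ccm} separately to the two models $A$ and $A'$, and then to splice the two resulting germ isomorphisms together through the common term $\Char(X)$.

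First I would fix the linear comparison of the ambient spaces. Since $q\ge 1$, each of the given $q$-equivalences $\apl(X)\simeq_q A$ and $\apl(X)\simeq_q A'$ induces an isomorphism on first cohomology; composing them yields a linear isomorphism $H^1(A)\cong H^1(X,\C)\cong H^1(A')$, and I use it to identify both $H^1(A)$ and $H^1(A')$ with the fixed space $H^1(X,\C)=T_1\Char(X)^{0}$.

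Next I would invoke Theorem \ref{thm:dp-ccm} for the model $A$: for each $i\le q$, the germ of $\VV^i(X)$ at $1$ is carried isomorphically onto the germ of $\RR^i(A)$ at $0$. The essential point, recorded in the remark following that theorem, is that these isomorphisms are all induced by a \emph{single} analytic isomorphism of ambient germs $\Phi_A\colon\Char(X)^{0}_{(1)}\to H^1(A)_{(0)}$, whose inverse is a restriction of the exponential map $\exp\colon H^1(X,\C)\to\Char(X)^{0}$. Applying the same theorem to $A'$ produces the corresponding ambient isomorphism $\Phi_{A'}\colon\Char(X)^{0}_{(1)}\to H^1(A')_{(0)}$.

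It then remains only to compose. The map $\Phi_{A'}\circ\Phi_A^{-1}\colon H^1(A)_{(0)}\to H^1(A')_{(0)}$ is an analytic isomorphism of ambient germs, and, since $\Phi_A^{-1}$ sends $\RR^i(A)_{(0)}$ to $\VV^i(X)_{(1)}$ while $\Phi_{A'}$ sends $\VV^i(X)_{(1)}$ to $\RR^i(A')_{(0)}$, the composite restricts to the desired isomorphism $\RR^i(A)_{(0)}\cong\RR^i(A')_{(0)}$ for every $i\le q$. The only point demanding care---and hence the only real obstacle---is the compatibility needed to form this composite: one must know that $\Phi_A^{-1}$ and $\Phi_{A'}^{-1}$ are restrictions of \emph{the same} map. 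This is exactly what the remark guarantees, since the exponential map belongs to the torus $\Char(X)^{0}$, an object depending only on $X$ and not on the chosen model. Indeed, under the identifications of the first step, $\Phi_A^{-1}$ and $\Phi_{A'}^{-1}$ become one and the same germ of $\exp$ at $0$, so $\Phi_{A'}\circ\Phi_A^{-1}$ is simply the linear identification $H^1(A)\cong H^1(A')$ of the first step; thus not only are the germs isomorphic, they coincide outright under that identification.
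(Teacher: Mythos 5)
Your argument is correct and is essentially the paper's own: the corollary is deduced by applying Theorem \ref{thm:dp-ccm} to each of the two models and composing the resulting germ isomorphisms through $\VV^i(X)_{(1)}$, using the fact (noted right after that theorem) that both ambient germ isomorphisms are inverse to restrictions of the same exponential map $\exp\colon H^1(X,\C)\to\Char(X)^{0}$. Your closing observation that, under the linear identification $H^1(A)\cong H^1(X,\C)\cong H^1(A')$, the two germs actually coincide is exactly what justifies the word ``restricting'' in the statement.
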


A particular case of Theorem \ref{thm:dp-ccm} is worth singling out.  

\begin{corollary}
\label{cor:germ formal}
If $X$ is a $q$-formal space, then, for all $i\le q$, 
the germ at $1$ of $ \VV^i (X)$ is isomorphic to
the germ at $0$ of $ \RR^i (X)$. 
\end{corollary}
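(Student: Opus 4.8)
The plan is to obtain this corollary as the special case of Theorem~\ref{thm:dp-ccm} in which the finite-type model $(A,\D)$ is taken to be the cohomology algebra of $X$ equipped with the zero differential. So the first step is simply to unwind the definition of $q$-formality recalled in \S\ref{subsec:apl}: by definition, $X$ being $q$-formal means precisely that Sullivan's model $\apl(X)$ is $q$-equivalent to $(H^{\hdot}(X,\C),0)$, with the weak equivalence preserving $\Q$-structures. Thus this particular $\cdga$ is an admissible input for Theorem~\ref{thm:dp-ccm}.

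Next I would check that $(H^{\hdot}(X,\C),0)$ genuinely qualifies as a finite-type $\cdga$ in the sense required by the theorem. Since $X$ is assumed throughout to be a connected, finite-type CW-complex, each cohomology group $H^i(X,\C)$ is finite-dimensional, so the cohomology algebra is a connected, finite-type $\cdga$; the differential is zero, which is certainly a valid differential. Hence Theorem~\ref{thm:dp-ccm} applies with $A=(H^{\hdot}(X,\C),0)$, and yields, for each $i\le q$, an isomorphism between the germ at $1$ of $\VV^i(X)$ and the germ at $0$ of $\RR^i(A)$.

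The final step is the definitional identification $\RR^i(A)=\RR^i(X)$. Indeed, as recalled in \S\ref{subsec:coho}, the resonance varieties $\RR^i(X)$ are defined to be exactly the resonance varieties of the $\cdga$ $(H^{\hdot}(X,\C),0)$; so $\RR^i(A)=\RR^i(H^{\hdot}(X,\C))=\RR^i(X)$. Substituting this into the conclusion of Theorem~\ref{thm:dp-ccm} gives the asserted isomorphism of germs.

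There is no real obstacle here: the statement is a direct specialization, and the only points warranting a sentence of care are the two identifications above—that $q$-formality is exactly $q$-equivalence of $\apl(X)$ to $(H^{\hdot}(X,\C),0)$, and that $\RR^i(X)$ is by definition the resonance variety of this cohomology $\cdga$. One should perhaps also remark, as the discussion following Theorem~\ref{thm:dp-ccm} emphasizes, that the germ isomorphism is the canonical one induced by a suitable restriction of the exponential map $\exp\colon \C^n\to(\C^*)^n$, so that it is genuinely the same analytic isomorphism appearing in the more general theorem.
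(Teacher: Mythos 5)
Your proposal is correct and is exactly the paper's (implicit) argument: the paper presents this corollary as "a particular case of Theorem~\ref{thm:dp-ccm}," obtained by taking $A=(H^{\hdot}(X,\C),0)$, which is an admissible finite-type model precisely by the definition of $q$-formality, and then using that $\RR^i(X)$ is by definition the resonance variety of this $\cdga$. Your two identifications and the remark about the exponential map are the right points of care; nothing is missing.
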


A precursor to this corollary can be found in the pioneering work 
of Green and Lazarsfeld \cite{GL87, GL91} on the cohomology 
jump loci of compact K\"{a}hler manifolds.   The case when 
$q=1$ was first established in \cite[Theorem A]{DPS-duke}.  For further 
developments in this direction, we refer to \cite{BW2, MPPS}.

\subsection{Tangent cones and jump loci}
\label{subsec:tcone thm}

Returning now to the general situation, consider  
an arbitrary connected, finite-type CW-complex $X$. We then 
have the following relationship (due to Libgober)  between the 
characteristic and resonance varieties of such a space. 

\begin{theorem}[\cite{Li02}]
\label{thm:lib}
For all $i\ge 0$, 
\begin{equation}
\label{eq:tc lib}
\TC_{1}(\VV^i(X))\subseteq \RR^i(X).
\end{equation}
\end{theorem}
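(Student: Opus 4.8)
The plan is to manufacture, out of a tangent vector to $\VV^i(X)$ at the identity, a nonzero class in the Aomoto complex computing $\RR^i(X)$: I would restrict the twisted cochain complex to an analytic arc through $1$, expand to first order, and read off a resonance class from the leading coefficients of a family of twisted cocycles.

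First I would localize at the origins. The set $\TC_1(\VV^i(X))$ depends only on the germ of $\VV^i(X)$ at $1\in\Char(X)$, the identity lies on $\Char(X)^0\cong(\C^{*})^n$ with $n=b_1(X)$, and the exponential map $\exp\colon(\C^n,0)\to((\C^{*})^n,1)$ is a biholomorphism of germs with $d\exp_0=\id$ under the identification $H^1(X,\C)=\C^n$; hence it carries $\TC_1(\VV^i(X))$ onto $\TC_0(\exp^{-1}\VV^i(X))$, and it suffices to place the latter inside $\RR^i(X)$. By the curve selection lemma, a vector $z_0\in\C^n$ lies in $\TC_0(\exp^{-1}\VV^i(X))$ precisely when there is a convergent analytic arc $z(s)$ with $z(0)=0$ and leading term a multiple of $z_0$, such that $\exp(z(s))\in\VV^i(X)$ for all small $s\ne0$.

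Next I would carry out the first-order calculation along such an arc. Pulling the cellular cochain complex $\big(C^{\hdot}(X,\C_{\exp(z)}),d_{\exp(z)}\big)$ back along $z=z(s)$ yields a complex of free, finite-rank $\C\{s\}$-modules whose differential expands as $D(s)=d+s^{m}\delta_{z_0}^{\mathrm{ch}}+O(s^{m+1})$, where $d$ is the ordinary differential, $m$ is the order of the arc, and $\delta_{z_0}^{\mathrm{ch}}$ is the chain-level operator inducing left cup-product by $z_0$ on $A=H^{\hdot}(X,\C)$. Since $\exp(z(s))\in\VV^i(X)$ for all small $s\ne0$ and $\C\{s\}$ is a discrete valuation ring whose torsion is supported only at $s=0$, the twisted cohomology cannot vanish generically, so $H^i$ of the complex over the fraction field $\C((s))$ is nonzero. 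Choosing a primitive integral cocycle $c=c_0+sc_1+\cdots$ representing such a class and expanding $D(s)c=0$, the order-$0$ term gives $dc_0=0$ and the order-$m$ term gives $\delta_{z_0}^{\mathrm{ch}}c_0=-dc_m$; passing to cohomology, this reads $z_0\cup[c_0]=0$, i.e.\ $[c_0]\in\ker\big(\delta_{z_0}\colon A^i\to A^{i+1}\big)$ in the sense of \eqref{eq:aomoto}.

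It remains to promote $[c_0]$ to a \emph{nonzero} class of the Aomoto complex $(A^{\hdot},\delta_{z_0})$, and this is the step I expect to be the main obstacle. One must arrange that $[c_0]\ne0$ in $A^i$ and, more delicately, that $[c_0]$ is not of the form $z_0\cup[e]$, for otherwise the cohomology class in $H^i(A,\delta_{z_0})$ could vanish. I would handle this by a minimality argument in the $s$-adic filtration: choosing $c$ so that its class in $H^i$ over $\C\{s\}$ is primitive and lies in a free summand forces $[c_0]\ne0$, and a finite sequence of modifications $c\rightsquigarrow s^{-1}(c-D(s)b)$, each reducing the $s$-order of any resonance coboundary contribution while preserving the nonzero class over $\C((s))$, terminates and leaves a representative whose leading class is nonzero in $H^i(A,\delta_{z_0})$. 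This yields $z_0\in\RR^i(X)$, completing the inclusion $\TC_1(\VV^i(X))\subseteq\RR^i(X)$.
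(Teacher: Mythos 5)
The paper itself offers no proof of this theorem --- it is quoted from Libgober's article \cite{Li02} --- so the only meaningful comparison is with that source. Your argument is essentially Libgober's: realize a tangent direction by an analytic arc through $1$ inside $\VV^i(X)$, expand the twisted differential along the arc, and read off a nonvanishing class of the Aomoto complex from the leading coefficients. The reduction via $\exp$ (a local biholomorphism at $1$ with identity differential, so tangent cones match), the curve-selection description of $\TC_0$ of an analytic germ, the identification of the order-$m$ coefficient of $D(s)$ with a chain-level representative of cup product by $z_0$ (which anticommutes with $d$ because $D(s)^2=0$, and induces $\delta_{z_0}$ on $H^{\hdot}(X,\C)$ by the standard linearization of the equivariant chain complex, cf.\ \S\ref{subsec:alexvars}), and the semicontinuity argument giving $H^i\ne 0$ over $\C((s))$ are all correct. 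Since $\RR^i(X)$ is homogeneous, it is also harmless that the arc only recovers a nonzero multiple of $z_0$.

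The step you flag as the main obstacle does go through, but your description of the iteration is the one place that needs to be made precise. Set $M^{\hdot}=C^{\hdot}(X,\C)\otimes\C\{s\}$ with differential $D(s)$, and let $\overline{H^i(M)}$ denote $H^i(M)$ modulo its $s$-torsion, a free $\C\{s\}$-module of positive rank because $H^i(M\otimes\C((s)))\ne 0$. Given a primitive cocycle $c$ with $[c]\ne 0$ over $\C((s))$: if $[c_0]=0$ in $H^i(X,\C)$, replace $c$ by $s^{-1}\bigl(c-D(s)b_0\bigr)$ with $db_0=c_0$; if $[c_0]\ne 0$ but $[c_0]=z_0\cup[e]$, pick a $d$-cocycle $e$ representing $[e]$ and $f$ with $c_0-\delta^{\mathrm{ch}}_{z_0}e=df$, and replace $c$ by $s^{-\nu}\bigl(c-D(s)(s^{-m}e+f)\bigr)$ for the appropriate $\nu\ge 1$ --- note the modifying cochain is not integral, but the new cocycle is, since the order-zero terms cancel. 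Either move exhibits the image of the original class in $\overline{H^i(M)}$ as divisible by one further power of $s$; since that image is nonzero and $\bigcap_N s^N\,\overline{H^i(M)}=0$, the process terminates (this, rather than a decreasing ``order of the coboundary contribution,'' is the correct reason). At termination $[c_0]$ lies in $\ker\delta_{z_0}$ (from the order-$s^m$ equation, the intermediate orders only giving $dc_j=0$) but not in $\im\delta_{z_0}$, whence $z_0\in\RR^i(X)$. With that adjustment the proof is complete.
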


Putting together these inclusions with those from \eqref{eq:ttinc}, 
we obtain the following immediate corollary. 
\begin{corollary}
\label{cor:tcone inc}
For all $i\ge 0$, 
\begin{equation}
\label{eq:tc inc}
\tau_{1}(\VV^i(X))\subseteq  \TC_{1}(\VV^i(X))\subseteq \RR^i(X).
\end{equation}
In particular, if $\RR^i(X)=\{0\}$, then $\tau_{1}(\VV^i(X))=\TC_{1}(\VV^i(X))=\{0\}$. 
\end{corollary}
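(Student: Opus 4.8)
The plan is to obtain both inclusions by specializing results already in hand, taking $W=\VV^i(X)$, and then to deduce the final assertion by a squeeze. The first thing I would address is a small compatibility point: the two tangent-cone constructions of \S\ref{subsec:exp tc} are phrased for subvarieties $W\subset(\C^*)^n$, whereas $\VV^i(X)$ lives in the character group $\Char(X)$, which may be disconnected. Since the identity component $\Char(X)^0$ is isomorphic to $(\C^*)^n$ with $n=b_1(X)$, and since both $\tau_1$ and $\TC_1$ depend only on the analytic germ of their argument at $1$ --- a germ that lies entirely in $\Char(X)^0$ --- the constructions apply verbatim to $\VV^i(X)$. This passage to the identity component is the only step that is not purely formal, and it is the place where I would be most careful.

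With that identification in place, the chain \eqref{eq:tc inc} is immediate. Taking $W=\VV^i(X)$ in the inclusion \eqref{eq:ttinc} gives $\tau_1(\VV^i(X))\subseteq \TC_1(\VV^i(X))$, while Theorem \ref{thm:lib} furnishes $\TC_1(\VV^i(X))\subseteq \RR^i(X)$; concatenating the two yields the asserted chain. I expect no genuine obstacle here: this really is an immediate corollary of \eqref{eq:ttinc} together with Libgober's theorem.

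For the final assertion, suppose $\RR^i(X)=\{0\}$. Because $0\in\RR^i(X)$ holds exactly when $H^i(X,\C)\neq 0$, the hypothesis forces $b_i(X)\neq 0$, whence $1\in\VV_i(X)$ and therefore $1\in\VV^i(X)$ as well. By the nonemptiness criteria recorded in \S\ref{subsec:exp tc}, both $\TC_1(W)$ and $\tau_1(W)$ are nonempty precisely when $1\in W$; thus $\TC_1(\VV^i(X))$ and $\tau_1(\VV^i(X))$ are both nonempty. Each is also homogeneous --- a cone, respectively a union of linear subspaces --- and so contains the origin. Since each therefore contains $0$ and is contained in $\RR^i(X)=\{0\}$ by the chain just proved, both must equal $\{0\}$.
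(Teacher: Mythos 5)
Your proposal is correct and follows the same route as the paper, which derives the corollary immediately by combining the general inclusion \eqref{eq:ttinc} with Theorem~\ref{thm:lib}. Your added care about the identity component of $\Char(X)$ and your squeeze argument for the final assertion (nonemptiness of both cones from $0\in\RR^i(X)\Rightarrow b_i(X)\ne 0\Rightarrow 1\in\VV^i(X)$, plus homogeneity) correctly fill in details the paper leaves implicit.
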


In general, though, each of the inclusions from \eqref{eq:tc inc}, 
or both inclusions can be strict, as examples to follow will show. 

We now turn to spaces which admit finite-type algebraic models, 
and to the relations that hold between cohomology jump loci 
in this framework. 

\begin{theorem}
\label{thm:tc model}
Let $X$ be a connected, finite-type CW-complex, and suppose 
the Sullivan model $\apl(X)$ is $q$-equivalent to a finite-type $\cdga$ $A$.  
Then, for all $i\le q$, 
\begin{enumerate} 
\item \label{tcm1}
$\TC_{1}(\VV^i(X))= \TC_{0}( \RR^i(A) )$.
\item \label{tcm2}
If, moreover, $A$ has positive weights, and the $q$-equivalence 
between $\apl(X)$ and $A$ preserves $\Q$-structures, then 
$\TC_{1}(\VV^i(X))=  \RR^i(A)$.
\end{enumerate}
\end{theorem}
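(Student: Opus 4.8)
The plan is to assemble the two parts from the structural results already established in the excerpt, applying the tangent cone operators to the germ-level isomorphisms. The key observation is that both $\TC_1$ and $\RR^i(A)$-data are determined by analytic germs: $\TC_1(W)$ depends only on the germ of $W$ at $1$, and likewise $\TC_0$ depends only on the germ at $0$. So the strategy is to transport tangent cones across the analytic isomorphism supplied by Theorem~\ref{thm:dp-ccm}.

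For part~\eqref{tcm1}, I would first invoke Theorem~\ref{thm:dp-ccm}: under the hypothesis that $\apl(X)$ is $q$-equivalent to $(A,\D)$, for each $i\le q$ the germ at $1$ of $\VV^i(X)$ is isomorphic to the germ at $0$ of $\RR^i(A)$, via the analytic isomorphism $\Char(X)^0_{(1)}\cong H^1(A)_{(0)}$ induced by the exponential map. The essential point is that this is not merely an abstract isomorphism of germs but is realized by a concrete biholomorphism whose differential at the respective base points is the identification $H^1(X,\C)\cong H^1(A)$. Since the usual tangent cone is an analytic-local invariant that is preserved under the linear part of a biholomorphism fixing the base point, applying $\TC_0$ on the resonance side and $\TC_1$ on the characteristic side yields $\TC_1(\VV^i(X))=\TC_0(\RR^i(A))$, under the linear identification of the ambient spaces $\C^n=H^1(A)$. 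I would spell out that $\TC_1$ commutes with finite unions (as recorded after \eqref{eq:tau1}), which guarantees compatibility when $\VV^i(X)$ decomposes into components through and away from $1$; only the germ at $1$ contributes to $\TC_1$, matching the germ at $0$ of $\RR^i(A)$.

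For part~\eqref{tcm2}, I would combine part~\eqref{tcm1} with the positive-weights structure theorem. Theorem~\ref{thm:mpps-bis}\eqref{r1} tells us that, when $A$ is rationally defined with positive weights and the $q$-equivalence preserves $\Q$-structures, the variety $\RR^i(A)$ is a \emph{finite union of rationally defined linear subspaces} of $H^1(A)$, for each $i\le q$. The crucial feature of a finite union of linear subspaces through the origin is that it equals its own tangent cone at $0$; that is, $\TC_0(\RR^i(A))=\RR^i(A)$. Substituting this into the equality from part~\eqref{tcm1} gives $\TC_1(\VV^i(X))=\TC_0(\RR^i(A))=\RR^i(A)$, as desired.

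I expect the main obstacle to be verifying rigorously that the tangent cone is compatible with the analytic germ isomorphism of Theorem~\ref{thm:dp-ccm}, rather than merely with abstract germ isomorphism. The definitions of $\TC_0$ and $\TC_1$ (via initial ideals, as in \S\ref{subsec:alg tcone} and \S\ref{subsec:exp tc}) are attached to fixed linear coordinate systems, so one must confirm that the biholomorphism, whose differential at the base point is the prescribed linear identification, carries the initial-form ideal on one side to the initial-form ideal on the other. This reduces to the standard fact that the tangent cone at a point is an invariant of the analytic germ and transforms by the linear part of any local analytic coordinate change; once this is stated carefully, the argument closes. A secondary, purely bookkeeping point is to ensure that, in part~\eqref{tcm2}, the linear subspaces produced by Theorem~\ref{thm:mpps-bis} all pass through $0$, so that the equality $\TC_0(\RR^i(A))=\RR^i(A)$ genuinely holds; this is immediate since $0\in\RR^i(A)$ whenever $H^i(A)\ne 0$ and, more to the point, each rationally defined linear subspace in the union is a \emph{subspace}, hence homogeneous.
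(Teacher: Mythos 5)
Your proposal is correct and follows essentially the same route as the paper: part (1) is deduced from the germ isomorphism of Theorem~\ref{thm:dp-ccm} (with the tangent cone transported through the exponential's linear part), and part (2) follows by combining part (1) with Theorem~\ref{thm:mpps-bis}, since a finite union of linear subspaces through the origin equals its own tangent cone at $0$. The paper states this in two sentences; your write-up merely supplies the supporting details.
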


\begin{proof}
\smartqed
The first assertion follows at once from Theorem \ref{thm:dp-ccm}. 
The second assertion follows from the first one, when coupled with 
Theorem \ref{thm:mpps-bis}.
\qed
\end{proof}

The following examples show that the positive-weights assumption 
in Theorem \ref{thm:tc model}, part \ref{tcm2} is really necessary.  
That is, we cannot always replace 
$\TC_{0}( \RR^i(A) )$ with $\RR^i(A)$ in part \ref{tcm1}. 

\begin{exm}
\label{exm:solv}
Let $X=S^1$.    
We can take as a finite-dimensional model for the circle the 
$\cdga$ $(A,\D)$ from Example \ref{ex:nonhomog}, with 
$A=\bigwedge(a,b)$ and $\D{a}=0$, $\D{b}=b\cdot a$.  
Since $\VV^1(S^1)=\{1\}$, the resonance variety $\RR^1(A)=\{0,1\}$ properly contains 
$\TC_1(\VV^1(S^1))=\{0\}$. Of course, we can also take as a model for $S^1$ 
its cohomology algebra, $A'=\bigwedge(a)$, endowed with the zero differential, 
in which case the conclusion of part \ref{tcm2} is satisfied.
\hfill $\Diamond$
\end{exm}

\begin{exm}
\label{exm:solv}
Let $\varGamma$ be a discrete, co-compact subgroup of a simply-con\-nected,
solvable, real Lie group $G$, and let $M=G/\varGamma$ be the corresponding 
solvmanifold. 
As shown in \cite{Ka, PP}, all the characteristic varieties of 
$M$ are finite subsets of $\Char(M)$.  Moreover, as shown by 
Papadima and P\u{a}unescu in \cite{PP}, if $(A,\D)$ is any finite-dimensional model 
for $M$ (such as the one constructed by Kasuya \cite{Ka}), then all the resonance 
varieties $\RR^i(A)$ contain $0$ as an isolated point; in particular,
$\TC_{1}(\VV^i(M))= \TC_{0}( \RR^i(A) )=\{0\}$. 

Now suppose $G$ is completely solvable, and $(A,\D)$ is the classical 
Hattori model for the solvmanifold $M=G/\varGamma$.  Work of 
Millionschikov \cite{Mill}, as reprised in \cite{PP}, shows that $\RR^i(A)$ 
is also a finite set.  Furthermore, there are examples of solvmanifolds 
of this type where $\RR^i(A)$ is different from $\{0\}$. 
\hfill $\Diamond$
\end{exm}

\subsection{The influence of formality}
\label{subsec:tcone formal}

The main connection between the formality property 
of a space and its cohomology jump loci is provided by the 
following theorem.  (Again, the case $q=1$ was established  
in \cite{DPS-duke}, and the general case in \cite{DP-ccm}.)
 
\begin{theorem}[\cite{DPS-duke, DP-ccm}]
\label{thm:tcone}
If $X$ is a $q$-formal space, the following ``tangent cone formula" holds, 
for all $i\le q$, 
\begin{equation}
\label{eq:tc}
\tau_{1}(\VV^i(X))=\TC_{1}(\VV^i(X))=\RR^i(X).
\end{equation}
\end{theorem}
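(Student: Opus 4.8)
The plan is to deduce the tangent cone formula \eqref{eq:tc} from the machinery already assembled, treating the formality hypothesis as the device that forces a favorable model and then chaining the earlier theorems. First I would observe that $q$-formality means, by definition, that $\apl(X)$ is $q$-equivalent to its cohomology algebra $(A,\D) = (H^{\hdot}(X,\C),0)$, and that this $q$-equivalence preserves $\Q$-structures. Thus the cohomology algebra itself serves as a finite-type $\cdga$ model $A$, and by Lemma \ref{lem:functoriality} (or directly from the definitions) we have $\RR^i(A) = \RR^i(X)$ for all $i\le q$, since the resonance of $A=(H^{\hdot}(X,\C),0)$ is exactly the classical resonance variety $\RR^i(X)$.

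Next I would invoke Theorem \ref{thm:tc model}, part \eqref{tcm1}, with this choice of model to obtain
\begin{equation*}
\TC_{1}(\VV^i(X)) = \TC_{0}(\RR^i(A)) = \TC_{0}(\RR^i(X)),\qquad i\le q.
\end{equation*}
The key extra input supplied by formality is that the cohomology algebra has zero differential, so by the discussion preceding Corollary \ref{cor:r1} the resonance variety $\RR^i(X)=\RR^i(A)$ is a \emph{homogeneous} subvariety of $H^1(X,\C)$. A homogeneous variety equals its own tangent cone at $0$, whence $\TC_{0}(\RR^i(X)) = \RR^i(X)$, giving the second equality $\TC_{1}(\VV^i(X)) = \RR^i(X)$ in \eqref{eq:tc}.

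For the first equality I would upgrade this using the positive-weights structure. Since $(H^{\hdot}(X,\C),0)$ carries positive weights (assign each cohomology class the weight equal to its degree, as noted in \S\ref{eq:poswt}) and is rationally defined compatibly with the Sullivan model, Theorem \ref{thm:mpps-bis}\eqref{r1} applies: $\RR^i(A)$ is a finite union of rationally defined linear subspaces of $H^1(A)$ for each $i\le q$. Combining this with Theorem \ref{thm:tc model}\eqref{tcm2} yields $\TC_{1}(\VV^i(X)) = \RR^i(A) = \RR^i(X)$, consistent with the above. It then remains to identify $\tau_{1}(\VV^i(X))$ with this common value. Because $\RR^i(X)$ is a finite union of rationally defined linear subspaces, and because Theorem \ref{thm:dp-ccm} realizes the germ identification $\VV^i(X)_{(1)}\cong \RR^i(A)_{(0)}$ through the exponential map, each linear component of $\RR^i(A)$ exponentiates to a subtorus of $\Char(X)$; by Example \ref{ex:tau1 torus} the exponential tangent cone recovers exactly the tangent space of each such subtorus. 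Since $\tau_1$ commutes with finite unions, we get $\tau_{1}(\VV^i(X)) = \RR^i(X)$ as well.

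The main obstacle I anticipate is the first equality, $\tau_{1}(\VV^i(X)) = \TC_{1}(\VV^i(X))$, rather than the second. The inclusion $\tau_1 \subseteq \TC_1$ is automatic from \eqref{eq:ttinc}, but the reverse requires genuinely using that the components of $\VV^i(X)$ through $1$ are subtori (not merely that $\RR^i$ is linear); the linear structure of resonance and the exponential-map identification of germs from Theorem \ref{thm:dp-ccm} together are what force the characteristic variety to be \emph{locally} a union of subtori near $1$, so that the two tangent-cone constructions agree. Carefully justifying that the exponential correspondence sends each rational linear component of $\RR^i(A)$ to a genuine subtorus germ of $\VV^i(X)$---and invoking Example \ref{ex:tau1 torus}---is the crux; everything else reduces to the already-established Theorems \ref{thm:tc model} and \ref{thm:mpps-bis} and the homogeneity of resonance in the zero-differential case.
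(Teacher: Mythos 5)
Your proof is correct and follows essentially the route the paper intends: Theorem \ref{thm:tcone} is stated without proof (it is cited from \cite{DPS-duke, DP-ccm}), but the surrounding machinery---Theorem \ref{thm:dp-ccm} with its exponential germ identification, Theorem \ref{thm:tc model}, Theorem \ref{thm:mpps-bis}, and the observation in \S\ref{eq:poswt} that a $q$-formal space has the rationally defined, positive-weight, zero-differential model $(H^{\hdot}(X,\C),0)$---is assembled precisely so that the formula \eqref{eq:tc} drops out as you describe. You also correctly isolate the crux: it is the rationality of the linear components of $\RR^i(X)$ (from Theorem \ref{thm:mpps-bis}\eqref{r1}) that makes each $\exp(L)$ a closed algebraic subtorus, so that by Example \ref{ex:tau1 torus} both $\tau_1$ and $\TC_1$, which depend only on the germ of $\VV^i(X)$ at $1$, recover the same union of subspaces.
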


As an application of this theorem, we have the following characterization 
of the irreducible components of the cohomology jump loci in the formal setting.

\begin{corollary}
\label{cor:rational}
If $X$ is a $q$-formal space, then, for all $i\le q$, 
\begin{enumerate}
\item \label{tc1}
All irreducible components of the resonance variety 
$\RR^i(X)$ are rationally defined subspaces 
of $H^1(X,\C)$. 
\item \label{tc2}
All irreducible components of the characteristic variety 
$\VV^i(X)$ which pass through the origin are algebraic 
subtori of $\Char(X)^{0}$, of the form $\exp(L)$, where 
$L$ runs through the linear subspaces comprising $\RR^i(X)$.
\end{enumerate}
\end{corollary}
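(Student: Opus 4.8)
The plan is to deduce both assertions from the tangent cone formula \eqref{eq:tc} of Theorem~\ref{thm:tcone}, together with the germ isomorphism of Corollary~\ref{cor:germ formal}. Throughout, I identify $H^1(X,\C)$ with $\C^n$ and $\Char(X)^0$ with $(\C^*)^n$, where $n=b_1(X)$.

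For part \ref{tc1}, I would simply invoke the equality $\RR^i(X)=\tau_{1}(\VV^i(X))$, valid for all $i\le q$. As recalled in \S\ref{subsec:exp tc}, the exponential tangent cone of any algebraic subset of a torus is a finite union of rationally defined linear subspaces of the ambient affine space. Hence $\RR^i(X)=\bigcup_{k}L_k$ for finitely many rational linear subspaces $L_k$, which I take to be its distinct irreducible components; this is precisely the asserted structure.

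For part \ref{tc2}, the starting point is Corollary~\ref{cor:germ formal}: for $i\le q$, the germ of $\VV^i(X)$ at $1$ is identified with the germ of $\RR^i(X)$ at $0$ by an analytic isomorphism induced by the exponential map $\exp\colon \C^n\to(\C^*)^n$ (the remark following Theorem~\ref{thm:dp-ccm}). Since $\exp$ carries each rational linear subspace $L_k$ onto the algebraic subtorus $\exp(L_k)\subseteq\Char(X)^0$ and restricts near $0$ to a biholomorphism of germs, it sends the germ $(L_k)_{(0)}$ onto the germ $(\exp(L_k))_{(1)}$. Because an analytic isomorphism of germs respects irreducible decomposition, the irreducible components of the germ of $\VV^i(X)$ at $1$ are exactly the germs $(\exp(L_k))_{(1)}$, each of which is smooth and irreducible.

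It remains to pass from this germwise description to the global components, which I expect to be the main obstacle: a globally irreducible variety may a priori have several branches at a point, so the germ data alone need not single out one subtorus. Let $V$ be an irreducible component of $\VV^i(X)$ with $1\in V$. Its germ $V_{(1)}$ is a union of some of the germs $(\exp(L_k))_{(1)}$, say for $k$ in an index set $K_V$. For each such $k$, germ containment of irreducible sets upgrades to Zariski-closure containment (a nonempty Euclidean-open piece of the irreducible $\exp(L_k)$ is Zariski dense in it, and $V$ is closed), so $\exp(L_k)\subseteq V$. To resolve the obstacle I would argue by dimension: since the local dimension of the irreducible variety $V$ at $1$ equals $\dim V$, we have $\dim V=\max_{k\in K_V}\dim\exp(L_k)$, so some $k_0\in K_V$ satisfies $\dim\exp(L_{k_0})=\dim V$. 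Then $\exp(L_{k_0})$ is an irreducible closed subset of the irreducible variety $V$ of the same dimension, whence $V=\exp(L_{k_0})$; consequently $V_{(1)}=(\exp(L_{k_0}))_{(1)}$ is irreducible, which forces $K_V=\{k_0\}$, since any other contributing $L_k$ would have to be properly contained in $L_{k_0}$, contradicting that the $L_k$ are distinct irreducible components. This shows every component of $\VV^i(X)$ through $1$ is one of the subtori $\exp(L_k)$; conversely each $\exp(L_k)$ occurs, as its germ is an irreducible component of the germ of $\VV^i(X)$ at $1$.
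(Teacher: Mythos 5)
Your overall route is the intended one: the paper derives this corollary directly from Theorem~\ref{thm:tcone}, and your part~\ref{tc1} (read $\RR^i(X)=\tau_1(\VV^i(X))$ off \eqref{eq:tc} and invoke the fact from \S\ref{subsec:exp tc} that exponential tangent cones are finite unions of rationally defined linear subspaces) is exactly right. In part~\ref{tc2}, however, there is one step you assert rather than prove: that the germ $V_{(1)}$ of a component $V\ni 1$ is a \emph{union} of full germs $(\exp(L_k))_{(1)}$. A priori the germ identification only gives the containment $V_{(1)}\subseteq\bigcup_k(\exp(L_k))_{(1)}$; an individual local branch of $V$ at $1$ could be properly contained in some $(\exp(L_k))_{(1)}$ (which might itself be a branch of a \emph{different} component), in which case no $(\exp(L_k))_{(1)}$ is contained in $V_{(1)}$ --- and that containment is precisely what your upgrade step ``$\exp(L_k)\subseteq V$'' and the ensuing dimension count rest on. The repair is easy and uses only tools you already deploy, applied in the opposite order: first, from $(\exp(L_k))_{(1)}\subseteq(\VV^i(X))_{(1)}$ and Zariski density of a Euclidean neighborhood of $1$ in the irreducible $\exp(L_k)$, conclude $\exp(L_k)\subseteq\VV^i(X)$ for \emph{every} $k$; then, for a component $V\ni 1$, the containment $V_{(1)}\subseteq\bigcup_k(\exp(L_k))_{(1)}$ upgrades (same density argument, now applied to $V$) to $V\subseteq\bigcup_k\exp(L_k)$, irreducibility of $V$ forces $V\subseteq\exp(L_{k_0})$ for a single $k_0$, and maximality of $V$ as an irreducible component of $\VV^i(X)\supseteq\exp(L_{k_0})$ gives $V=\exp(L_{k_0})$ outright, with no dimension count needed. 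The converse direction then goes through as you say: each $\exp(L_k)$ lies in some component through $1$, which by the above equals some $\exp(L_{k_0})$, and $L_k\subseteq L_{k_0}$ forces $k=k_0$ since the $L_k$ are the distinct irreducible components of $\RR^i(X)$.
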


Even when the space $X$ is formal, the characteristic varieties $\VV^i(X)$ 
may have irreducible components which do not  pass through the identity 
of $\Char(X)^{0}$, and thus are not detected by the resonance varieties $\RR^i(X)$.  

\begin{exm}
\label{ex:knot}
Let $K$ be a non-trivial knot in the $3$-sphere, with 
complement $X=S^3 \setminus K$. Then $H^*(X,\Z)\cong H^*(S^1,\Z)$; 
therefore, $X$ is formal and $\RR^1(X)=\{0\}$. 
The characteristic variety $\VV^1(X)\subset \C^*$ 
consists of $1$, together with all the roots of the Alexander polynomial, 
$\varDelta_K\in \Z[t^{\pm 1}]$.  Thus, if $\varDelta_K\not\equiv 1$, 
then $\VV^1(X)$ has components which do not contain $1$. 
\hfill $\Diamond$
\end{exm}

\begin{exm}
\label{ex:mobius}
Let $X$ be the $2$-complex obtained by gluing a M\"{o}bius band 
to a $2$-torus along a meridian circle.  Then $X$ has the same rational 
cohomology ring as the $2$-torus; thus, $X$ is formal and $\RR^1(X)=\{0\}$.  
On the other hand, $\pi_1(X)=\langle x_1,x_2 \mid x_1x_2^2 = x_2^2 x_1\rangle$; 
hence, the variety $\VV^1(X)\subset (\C^*)^2$ consists of the identity together  
with the translated subtorus $t_1t_2^{-1}=-1$.
\hfill $\Diamond$
\end{exm}

\subsection{Formality tests}
\label{subset:discuss}

The next several examples illustrate the various ways in which 
the inclusions from Corollary \ref{cor:tcone inc} may fail to hold as equalities, 
thereby showing how Theorem \ref{thm:tcone} and Corollary \ref{cor:rational} 
can be used to detect non-formality. More examples will be given in 
Sections \ref{sect:qproj}--\ref{sect:elliptic}. 

\begin{exm}
\label{ex:toy}
Let $X$ be the presentation $2$-complex for the 
group $\pi=\langle x_1, x_2 \mid [x_1,[x_1, x_2]]\rangle$.   
In this case, $\VV^1(X)=\{t_1=1\}$, and so 
$\tau_{1}(\VV^1(X))=\TC_{1}(\VV^1(X))=\{x_1=0\}$.  
On the other hand, $\RR^1(X)=\C^2$, and so $X$ is not $1$-formal.
\hfill $\Diamond$
\end{exm}

The next example (adapted from \cite{DPS-duke}) 
shows how the rationality property from Corollary \ref{cor:rational} 
can be used as a formality test. 

\begin{exm}
\label{ex:sqr2}
Let $X$ be the presentation $2$-complex for the 
group $\pi$ with generators $x_1, \dots, x_4$ 
and relators $r_1=[x_1, x_2]$, $r_2=[x_1, x_4] [x_2^{-2}, x_3]$, 
and $r_3= [x_1^{-1}, x_3] [x_2, x_4]$.   Computing the 
linearized Alexander matrix of this presentation by 
the method described in \S\ref{subsec:alexvars},  
we see that $\RR^1(X)$ is the  quadric 
hypersurface in $\C^4$ given by the equation $z_1^2-2z_2^2=0$.  
This quadric splits into two linear subspaces defined 
over $\R$, but it is irreducible over $\Q$.  Thus, $X$ is not 
$1$-formal. 
\hfill $\Diamond$
\end{exm}

\begin{exm}
\label{example:ruled}
In view of Remark \ref{rem:univ}, there is a finitely presented 
group $\pi$ with abelianization $\Z^3$ and characteristic variety
\[
\VV^1(\pi)=\big\{ (t_1, t_2, t_3) \in (\C^*)^3 \mid (t_2-1)=(t_1+1)(t_3-1) \big\}.
\]
As noted in \cite{SYZ}, this variety is irreducible, and its exponential 
tangent cone at the origin splits as a union of two (rationally defined) 
lines in $\C^3$, 
\[
\tau_1(\VV^1(\pi))=\{x_2=x_3=0\}\cup \{x_1-x_3=x_2-2x_3=0\}.
\]
The variety $\VV^1(\pi)$ is a complex, $2$-dimensional torus 
passing through the origin. Nevertheless, this torus does not embed 
as an algebraic subgroup in $(\C^*)^3$; indeed, if it did, 
$\tau_1(\VV^1(\pi))$ would be a single plane.  Consequently, 
the group $\pi$ is not $1$-formal. 
\hfill $\Diamond$
\end{exm}

\section{Smooth quasi-projective varieties}
\label{sect:qproj}

We now switch our focus, from the general theory of cohomology 
jump loci to some of the applications of this theory within the class 
of smooth, complex quasi-projective varieties.  
For such spaces, the cohomology jump loci are severely 
restricted by the extra structure imposed on their $\cdga$ 
models and cohomology rings by the underlying algebraic geometry. 

\subsection{Compactifications and formality}
\label{subsec:qproj}

A complex projective variety is a subset of a complex projective 
space $\CP^n$, defined as the zero-locus of a homogeneous prime ideal in 
$\C[z_0,\dots, z_n]$.  A Zariski open subvariety of a projective 
variety is called a quasi-projective variety.  We will only consider here 
projective and quasi-projective varieties which are connected and smooth. 

If $M$ is a (compact, smooth) projective variety, then the 
Hodge decomposition on $H^*(M,\C)$ puts strong constraints 
on the topological properties of $M$.  For instance, 
as shown in \cite{DGMS}, every such a manifold is formal. 
  
Each smooth, quasi-projective variety $X$ admits a good compactification.  
That is to say, there is a smooth, complex projective variety $\overline{X}$  and a 
normal-crossings divisor $D$ such that $X=\overline{X}\setminus D$. 
By a well-known theorem of Deligne, each cohomology group of 
$X$ admits a mixed Hodge structure.   This additional structure puts 
definite constraints on the algebraic topology of such manifolds.   

For instance, if $X$ admits a smooth compactification 
$\overline{X}$ with $b_1(\overline{X})=0$, the weight 
$1$ filtration on $H^1(X,\C)$ vanishes; in turn, by work 
of Morgan \cite{Mo}, this implies the $1$-formality of $X$.   
Thus, as noted by Kohno in \cite{Ko}, if $X$ is the complement 
of a hypersurface in $\CP^n$, then $\pi_1(X)$ is $1$-formal.  
Moreover, if $n=2$, then $X$ itself is formal, see \cite{CM12, Ma}.

In general, though, smooth quasi-projective varieties need not 
be $1$-formal.  Furthermore, even when they are $1$-formal, 
they still can be non-formal. 

\begin{exm}
\label{ex:qp1f}
Let $E^{\times n}$ be the $n$-fold product of an elliptic curve.  The 
closed form $\frac{1}{2} \sqrt{-1} \sum_{i=1}^n dz_i \wedge d\bar{z}_i$ 
defines an integral cohomology class $\omega\in H^{1,1}(E^{\times n})$.  
By the Lefschetz theorem on $(1,1)$-classes, 
$\omega$ can be realized as the first Chern class of an algebraic 
line bundle over $E^{\times n}$.    Let $X_n$ be the complement 
of the zero-section of this bundle.  Then $X_n$ is a smooth, 
quasi-projective variety which is not formal.  In fact, as 
we shall see in Example \ref{ex:heis-bis}, $X_1$ 
is not $1$-formal.  On the other hand, $X_n$ is $1$-formal, for all $n>1$. 
\hfill $\Diamond$
\end{exm}

\subsection{Algebraic models}
\label{subsec:gysin}

As before, let $X$ be a connected, smooth quasi-projective variety, and  
choose a smooth compactification  $\overline{X}$ such that 
the complement is a finite union, $D=\bigcup_{j\in J} D_j$, 
of smooth divisors with normal crossings.
There is then a rationally defined $\cdga$, 
$A=A(\overline{X}, D)$, called the {\em Gysin model} 
of the compactification, constructed as follows.
As a $\C$-vector space, $A^i$ is the direct sum 
of all subspaces 
\begin{equation}
\label{eq:gysin}
A^{p,q}= \bigoplus_{\abs{S} =q} 
H^p \Big(\bigcap_{k\in S} D_k, \C\Big)(-q)
\end{equation}
with $p+q=i$, where $(-q)$ denotes the Tate twist. 
Furthermore, the multiplication in $A$ 
is induced by the cup-product in $\overline{X}$, and has the property that 
$A^{p,q} \cdot A^{p',q'} \subseteq A^{p+p',q+q'}$, while the differential, 
$\D \colon A^{p,q} \to A^{p+2, q-1}$, is constructed from the 
Gysin maps arising from intersections of divisors. 
The $\cdga$ just constructed depends on the compactification 
$\overline{X}$; for simplicity, though, we will denote it by $A(X)$ 
when the compactification is understood. 

An important particular case 
is when our variety $X$ has dimension $1$. That is to say, let $\varSigma$ 
be a connected, possibly non-compact, smooth algebraic curve.  
Then $\varSigma$ admits a canonical compactification,  
$\overline{\varSigma}$, and thus, a canonical Gysin model, 
$A (\varSigma)$. We illustrate the construction of this model 
in a simple situation, which we shall encounter again in Section \ref{sect:elliptic}. 

\begin{exm}
\label{ex:elliptic gysin}
Let $\varSigma=E^{*}$ be a once-punctured 
elliptic curve.  Then $\overline{\varSigma}=E$, and the Gysin 
model $A (\varSigma)$ is the algebra $A=\bigwedge(a,b,e)/(ae, be)$ 
on generators $a,b$ in bidegree $(1,0)$ and 
generator $e$ in bidegree $(0,1)$, with differential 
$\D\colon A\to A$ given by $\D{a}=\D{b}=0$ and $\D{e}=ab$. 
\hfill $\Diamond$
\end{exm}

The above construction is functorial, in the following sense: 
If $f\colon X\to Y$ is a morphism of quasi-projective manifolds 
which extends to a regular map $\bar{f}\colon \overline{X}\to \overline{Y}$ 
between the respective good compactifications, then there is an induced 
$\cdga$ morphism $f^{!}\colon A(Y)\to A(X)$ which respects the bigradings. 

Morgan showed in \cite{Mo} that the Sullivan model $\apl (X)$ 
is connected to to the Gysin model $A(X)$ by a chain  
of quasi-isomorphisms preserving $\Q$-structures. 
Moreover, setting the weight of $A^{p,q}$ equal to $p+2q$ defines a 
positive-weight decomposition on $(A^{\hdot}, d)$. 

In \cite{Du}, Dupont constructs a Gysin-type model for certain types 
of quasi-projective varieties, where the normal-crossing divisors 
assumption on the compactification can be relaxed. More precisely, 
let $\A$ be an arrangement of smooth hypersurfaces in a smooth, 
$n$-dimensional complex projective variety $\overline{X}$, 
and suppose $\A$ locally looks like an arrangement of 
hyperplanes in $\C^n$.  There is then a $\cdga$ model 
for the complement, $X=\overline{X}\setminus \bigcup_{L\in \A} L$, 
which builds on the combinatorial definition of the 
Orlik--Solomon algebra of a hyperplane arrangement 
(an algebra we will return to in \S\ref{subsec:arr stuff}).

\subsection{Configuration spaces}
\label{subsec:config}

In a special situation, an alternate model is available. 
A construction due to Fadell and Neuwirth associates 
to a space $X$ and a positive integer $n$ the space of 
ordered configurations of $n$ points in $X$, 
\begin{equation}
\label{eq:conf gamma}
\Conf(X,n) = \{ (x_1, \dots , x_n) \in X^{n} 
\mid x_i \ne x_j \text{ for } i\ne j\}.
\end{equation}

The $E_2$-term of the Leray spectral sequence for 
the inclusion $\Conf(X,n)\inj X^n$ was described concretely 
by Cohen and Taylor in the late 1970s. 
If $X$ is a smooth, complex projective variety 
of dimension $m$, then $\Conf(X,n)$ is a smooth, 
quasi-projective variety.  Moreover, as shown by Totaro in \cite{To96}, 
the Cohen--Taylor spectral sequence collapses at the 
$E_{m+1}$-term, and the $E_m$-term is a $\cdga$ model 
for the configuration space $\Conf(X,n)$.

The most basic example is the configuration space 
of $n$ ordered points in $\C$, which is a classifying 
space for $P_n$, the pure braid group on $n$ strings, 
whose cohomology ring was computed by Arnol'd in the 
late 1960s.  We shall come back to this example in 
\S\ref{sect:arr mf} in the setting of arrangements 
of hyperplanes, and we shall look at configuration space 
of $n$ points on an elliptic curve $E$ in \S\ref{sect:elliptic}, 
in the setting of elliptic arrangements.

More generally, following Eastwood and Huggett \cite{EH}, 
one may consider the ``graphic configuration spaces"  
\begin{equation}
\label{eq:conf}
\Conf(X, \varGamma)=  \{ (x_1, \dots , x_n) \in X^{\times n} 
\mid x_i \ne x_j \text{ for } \{i,j\}\in E(\varGamma)\}
\end{equation}
associated to a space $X$ and a simple graph $\varGamma$ with 
vertex set $[n]$ and edge set $E(\varGamma)$.  Especially interesting 
is the case when $X$ is a Riemann surface $\varSigma_g$.  For such 
a space, the naive compactification, $\overline{\Conf(X,n)}=X^{\times n}$, 
satisfies the hypothesis which permit the construction of the 
Dupont model, \cite{Du}.   For recent work exploiting this model, 
we refer to \cite{BMPP}. 

\subsection{Characteristic varieties}
\label{subsec:cv qproj} 

The structure of the jump loci for cohomology in rank $1$ local systems 
on smooth, complex projective and quasi-projective varieties (and, more 
generally, on K\"{a}hler and quasi-K\"{a}hler manifolds) was determined 
through the work of Beauville \cite{Be},  Green and 
Lazarsfeld \cite{GL87, GL91}, Simpson \cite{Sp92}, 
and Arapura \cite{Ar}.  

In the quasi-projective setting, further improvements 
and refinements were given in  \cite{Li01, ACM, Su-imrn}.  The definitive 
structural result was obtained by Budur and Wang in \cite{BW1}, 
building on work of Dimca and Papadima \cite{DP-ccm}. 

\begin{theorem}[\cite{BW1}]
\label{thm:bw}
Let $X$ be a smooth quasi-projective variety.  Then each 
characteristic variety $\VV^i(X)$ is a finite union of torsion-translated 
subtori of $\Char(X)$. 
\end{theorem}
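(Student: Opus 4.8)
The plan is to establish Theorem~\ref{thm:bw} by combining the structural results on cohomology jump loci of K\"{a}hler and quasi-K\"{a}hler manifolds with the abelian-cover and Hodge-theoretic machinery developed for quasi-projective varieties. The statement has two distinct parts that I would prove in sequence: first, that each component of $\VV^i(X)$ is a \emph{translated subtorus} (i.e.\ a coset of an algebraic subtorus of $\Char(X)$), and second, that the translation can always be taken by a point of \emph{finite order} (torsion). The first part is essentially classical K\"{a}hler geometry; the second part, establishing torsion, is where the real difficulty lies.

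For the first part, I would proceed as follows. Fix a smooth compactification $X=\overline{X}\setminus D$ with $D$ a normal-crossings divisor, as guaranteed above. The starting point is the analysis of the twisted cohomology $H^i(X,\C_\rho)$ for a rank-$1$ local system $\rho\in\Char(X)$. For local systems coming from unitary (or, more generally, semisimple) representations, the work of Beauville, Green--Lazarsfeld, and Simpson shows that the positive-dimensional components of the cohomology support loci are translates of subtori, arising from pullbacks along admissible maps $f\colon X\to\varSigma$ to smooth curves, or more generally to lower-dimensional quasi-projective targets (orbifolds). Arapura's theorem \cite{Ar} packages this: each $\VV^i(X)$ is a finite union of translated subtori, where the subtori are of the form $f^*\Char(\varSigma)$ and the translation is by a character. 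I would invoke this to reduce the problem to the statement that the translating characters are torsion.

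For the \textbf{torsion} part, which I expect to be the main obstacle, the key tool is the interplay between the Hodge-theoretic structure on $\Char(X)$ and the algebraic models discussed earlier in the paper. The strategy of Budur--Wang builds on Dimca--Papadima \cite{DP-ccm}: one exploits the fact that the components of $\VV^i(X)$ are not merely complex subvarieties but are compatible with the underlying $\Q$- (indeed $\Z$-) structure on the character variety and with the mixed Hodge structure on $H^1(X)$. Concretely, the exponential tangent cone $\tau_1(\VV^i(X))$ is a finite union of rationally defined linear subspaces (as recalled in \S\ref{subsec:exp tc}), and via the Gysin model $A(X)$ and Theorem~\ref{thm:dp-ccm} this linear-algebraic data controls the tangent directions of every component through any given torsion point. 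The decisive input is a rigidity/density argument: a translated subtorus whose defining data is rational, and which must be preserved by the Galois action on torsion points of $\Char(X)$ coming from the $\Z$-structure, is forced to be a \emph{torsion} translate. Here one uses that the relevant admissible maps and their associated Deligne--Hodge data descend to number fields, so the translating characters lie in the torsion subgroup. Assembling these ingredients—reducing to finitely many components via Noetherianity, identifying each as a translated subtorus via Arapura, and pinning down the torsion of the translation via the arithmetic of the mixed Hodge structure—completes the proof.
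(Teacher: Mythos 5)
The paper does not actually prove this statement: Theorem~\ref{thm:bw} is quoted verbatim from Budur--Wang \cite{BW1}, so there is no in-text argument to compare against. Judged on its own terms, your sketch has two genuine gaps, one in each half of your plan.

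First, you cannot ``invoke Arapura to reduce the problem to the statement that the translating characters are torsion.'' Arapura's theorem, in the generality of an arbitrary smooth quasi-projective $X$, describes the positive-dimensional components of $\VV^1(X)$ only (this is exactly how the paper states it as Theorem~\ref{thm:arapura}); the translated-subtorus structure of $\VV^i(X)$ for $i\ge 2$ is available in \cite{Ar} only under extra hypotheses such as $b_1(\overline{X})=0$. Establishing that \emph{every} $\VV^i(X)$ is a finite union of translated subtori is the main new content of \cite{BW1}, not a classical input, so your first step assumes the hard part of what is to be proved. Second, the torsion step is not an argument as written: a translated subtorus can be defined over $\Q$ (even over $\Z$) and be Galois-stable as a subvariety without the translating character being torsion --- the point $2\in\C^{*}$ is a rationally defined, Galois-stable zero-dimensional ``translate'' that is not torsion. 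Rationality of the tangent-cone data at $1$ (via the Gysin model and Theorem~\ref{thm:dp-ccm}) constrains only the components through the identity and says nothing about where the other components sit. The actual mechanism in \cite{BW1} is different: one shows the jump loci are \emph{absolute} closed subsets in Simpson's sense, i.e.\ compatible with both the Betti and the de Rham algebraic structures on the rank-one moduli space (and with Galois conjugation), and then a transcendence argument in the style of Gelfond--Schneider forces any such subvariety to be a finite union of torsion-translated subtori. That comparison of two distinct algebraic structures on $\Char(X)$, which is the decisive ingredient, is absent from your sketch.
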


Work of Arapura \cite{Ar} explains how the  
non-translated subtori occurring in the above decomposition of 
$\VV^1(X)$ arise.  Let us say that a holomorphic map $f \colon X \to \varSigma$ 
is {\em admissible}\/ if $f$ is surjective, has connected generic fiber, 
and the target $\varSigma$ is a connected, smooth complex curve 
with negative Euler characteristic.  Up to reparametri\-zation at the target, 
the variety $X$ admits only finitely many admissible maps; let 
$\mathcal{E}_X$ be the set of equivalence classes of such maps. 

If $f \colon X \to \varSigma$ is an admissible map, 
it is readily verified that $\VV^1(\varSigma)=\Char(\varSigma)$. 
Thus, the image of the induced morphism between character groups, 
$f^*\colon \Char(\varSigma)\to \Char(X)$, is an algebraic subtorus 
of  $\Char(X)$.

\begin{theorem}[\cite{Ar}]
\label{thm:arapura}
The correspondence $f \mapsto f^*(\Char(\varSigma))$ establishes 
a bijection between the set $\mathcal{E}_X$ of equivalence classes 
of admissible maps from $X$ to curves and the set of 
positive-dimensional, irreducible components of $\VV^1(X)$ 
containing $1$.
\end{theorem}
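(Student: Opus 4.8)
The plan is to establish the bijection in two directions, using admissible maps to produce components of $\VV^1(X)$ and then showing, via the structure theorem of Budur--Wang together with the geometry of the $\Albanese$ / pencil construction, that every positive-dimensional component through $1$ arises this way. First I would verify the forward direction: given $f\colon X\to\varSigma$ admissible, the hypothesis $\chi(\varSigma)<0$ forces $\VV^1(\varSigma)=\Char(\varSigma)$ (since a curve with negative Euler characteristic has $H^1$ with \emph{every} nontrivial rank-one local system contributing, as one checks from the Euler-characteristic computation $\dim H^1(\varSigma,\C_\rho)=-\chi(\varSigma)>0$ for $\rho\neq 1$, and $1\in\VV^1$ because $b_1(\varSigma)>0$). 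Pulling back local systems along $f$ and using that $f$ is surjective with connected generic fiber, one shows $f^*(\Char(\varSigma))\subseteq\VV^1(X)$; this is the standard observation that $H^1(\varSigma,\C_\rho)\inj H^1(X,f^*\C_\rho)$ via $f^*$ when the generic fiber is connected. Since $\Char(\varSigma)$ is positive-dimensional (as $b_1(\varSigma)\ge 1$) and $f^*$ is injective on character groups (again by connectivity of the generic fiber, so $f_*\colon\pi_1(X)\to\pi_1(\varSigma)$ is surjective), the image $f^*(\Char(\varSigma))$ is a positive-dimensional subtorus contained in $\VV^1(X)$ and passing through $1$.

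Next I would argue that this subtorus is in fact an \emph{irreducible component}, not merely contained in one. The key input here is Theorem \ref{thm:bw}: every component of $\VV^1(X)$ is a torsion-translated subtorus, so a component containing the subtorus $f^*(\Char(\varSigma))$ and passing through $1$ is itself a subtorus $T$ with $f^*(\Char(\varSigma))\subseteq T$. To rule out strict containment one invokes Arapura's analysis of the isotropic subspaces of $H^1(X,\C)$: each positive-dimensional component through $1$ corresponds, via the tangent cone, to a maximal isotropic subspace of $H^1$ with respect to cup product, and the admissible map $f$ realizes the pencil whose associated isotropic subspace is exactly $f^*H^1(\varSigma,\C)$; maximality of this isotropic subspace (which follows from the connectedness and genericity conditions defining admissibility, via the classical theory of pencils on a smooth variety) forces $T=f^*(\Char(\varSigma))$.

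For the reverse direction and injectivity, I would start from a positive-dimensional component $T\subseteq\VV^1(X)$ containing $1$, which by Theorem \ref{thm:bw} is a subtorus. Its tangent space $\TC_1(T)=\tau_1(T)$ is a rationally defined linear subspace of $H^1(X,\C)$ which, by the inclusion $\TC_1(\VV^1(X))\subseteq\RR^1(X)$ from Theorem \ref{thm:lib} combined with the structure of $\RR^1(X)$, must be an isotropic subspace for the cup-product pairing. The heart of Arapura's theorem is then the reconstruction: any such positive-dimensional isotropic subspace $V$ with $\dim V\ge 2$ (or the degenerate pairing condition) gives rise, through integration of the corresponding holomorphic $1$-forms and the resulting map to the Albanese followed by a Stein factorization, to an admissible map $f\colon X\to\varSigma$ with $f^*H^1(\varSigma,\C)=V$. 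Injectivity of $f\mapsto f^*(\Char(\varSigma))$ follows because two admissible maps inducing the same subtorus induce the same subspace of $H^1$, and two admissible maps (up to reparametrization of the target) with the same associated pencil agree, by the uniqueness in Stein factorization.

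\textbf{The main obstacle} will be the reverse direction --- specifically, the reconstruction of an admissible map from an irreducible component of $\VV^1(X)$. The forward inclusion and the identification of $\VV^1(\varSigma)$ are essentially formal, but producing the curve $\varSigma$ and the map $f$ from a positive-dimensional subtorus requires the genuinely geometric content of Arapura's work: one must show that the isotropic subspace $V$ of holomorphic and anti-holomorphic $1$-forms integrates to a well-defined map to a curve (rather than merely to a higher-dimensional abelian variety), and that after Stein factorization the target curve has $\chi<0$. This last positivity is what guarantees admissibility and is precisely where the hypothesis that the component is \emph{positive-dimensional} (rather than just a point) is used: a $0$-dimensional torsion point of $\VV^1(X)$ need not arise from any pencil. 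I expect to quote Arapura's Theorem \ref{thm:arapura} for this step rather than reprove it, since the reconstruction rests on the Castelnuovo--de Franchis type argument and the Hodge-theoretic structure of the characteristic varieties established in the quasi-projective setting by the cited works of Beauville, Simpson, and Arapura.
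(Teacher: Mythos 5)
This theorem is stated in the paper as a quoted result of Arapura \cite{Ar}; the paper supplies no proof of its own, so there is no internal argument to measure your proposal against. Judged on its own terms, your forward direction is sound and matches the standard argument: for $\chi(\varSigma)<0$ one has $\dim H^1(\varSigma,\C_\rho)\ge -\chi(\varSigma)>0$ for all $\rho$, hence $\VV^1(\varSigma)=\Char(\varSigma)$; surjectivity with connected generic fiber makes $\pi_1(X)\to\pi_1(\varSigma)$ surjective, so $f^*$ embeds $\Char(\varSigma)$ as a positive-dimensional subtorus through $1$, and the inclusion $H^1(\varSigma,\C_\rho)\inj H^1(X,f^*\C_\rho)$ puts it inside $\VV^1(X)$.

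The problem is the rest. Your argument that $f^*(\Char(\varSigma))$ is a whole component, and your entire reverse direction, are explicitly deferred to ``Arapura's Theorem \ref{thm:arapura}'' --- that is, you cite the statement you are proving. If the intent is to prove the theorem rather than to restate that it is known, this is a genuine gap: the surjectivity of the correspondence (reconstructing an admissible pencil from a positive-dimensional component through $1$) is the entire content of the result, and the Castelnuovo--de Franchis/Albanese reconstruction must either be carried out or attributed to a \emph{different}, logically prior result. Two further imprecisions: (i) your route through $\TC_1(\VV^1(X))\subseteq\RR^1(X)$ and isotropic subspaces of the cup product silently imports resonance-variety machinery (and, to conclude anything sharp, something like $1$-formality), whereas Arapura's argument is Hodge-theoretic on the characteristic varieties directly and needs no such hypothesis; (ii) ``maximality of the isotropic subspace'' is not the right mechanism for componenthood --- for compact targets of genus $\ge 2$ the subspace $f^*H^1(\varSigma,\C)$ is not even isotropic, only $1$-isotropic, and what is actually true (cf.\ Theorem \ref{thm:res kahler}) is that the subspaces attached to distinct admissible maps intersect pairwise in $0$, which is what separates the components and gives injectivity. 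Also note that Theorem \ref{thm:bw} postdates and refines Arapura's work, so invoking it to prove Arapura's theorem inverts the logical order, even if it is harmless in a survey context.
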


The positive-dimensional, irreducible components of $\VV^1(X)$ 
which do not pass through $1$ can be similarly described, by replacing 
the admissible maps with certain ``orbifold fibrations," whereby 
multiple fibers are allowed.  For more details and further explanations, 
we refer to \cite{ACM, Su-imrn}. 

\subsection{Resonance varieties}
\label{subsec:res qproj} 

We now turn to the resonance varieties associated to a 
quasi-projective manifold, and how they relate to the 
characteristic varieties.  The Tangent Cone theorem 
takes a very special form in this algebro-geometric setting. 

\begin{theorem}
\label{thm:tcone qp}
Let $X$ be a smooth, quasi-projective variety, and let $A(X)$ be a 
Gysin model for $X$.  Then, for each $i\ge 0$, 
\begin{equation}
\label{eq:tcone qp}
\tau_{1}(\VV^i(X)) =  \TC_{1}(\VV^i(X)) = \RR^i(A(X)) \subseteq  \RR^i(X).
\end{equation}
Moreover, if $X$ is $q$-formal, the last inclusion is an equality, 
for all $i\le q$.
\end{theorem}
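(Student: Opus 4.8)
The plan is to assemble the two equalities and the inclusion from the general machinery already in place, feeding in the two inputs special to the quasi-projective setting: Morgan's construction of the Gysin model and the Budur--Wang structure theorem. The starting observation is that $A(X)$ is a finite-type, rationally defined \cdga{} with positive weights (the weight of $A^{p,q}$ being $p+2q$), and that by Morgan's theorem it is weakly equivalent to $\apl(X)$ through quasi-isomorphisms preserving $\Q$-structures. In particular $\apl(X)$ is $q$-equivalent to $A(X)$ for \emph{every} $q$, so all hypotheses invoked below hold in every degree $i$.

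First I would establish the middle equality $\TC_{1}(\VV^i(X)) = \RR^i(A(X))$. This is immediate from Theorem \ref{thm:tc model}, part \ref{tcm2}: the positive-weights hypothesis holds and the weak equivalence preserves $\Q$-structures, so the general formula gives the asserted identity for all $i$. The rightmost inclusion $\RR^i(A(X)) \subseteq \RR^i(X)$ is likewise a direct application, this time of Theorem \ref{thm:mpps-bis}, part \ref{r2}, valid for all $i$ under the same hypotheses.

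The one place where the algebraic geometry genuinely enters---and the step I expect to be the crux---is the leftmost equality $\tau_{1}(\VV^i(X)) = \TC_{1}(\VV^i(X))$, which fails for a general space. Here I would invoke Theorem \ref{thm:bw}: each $\VV^i(X)$ is a finite union $\bigcup_k \rho_k T_k$ of torsion-translated subtori. Since both $\tau_1$ and $\TC_1$ depend only on the germ at the identity, and both are empty on a component not containing $1$, only the components through $1$ contribute. But any translated subtorus containing $1$ is itself a subtorus through the identity (if $1\in\rho_k T_k$ then $\rho_k\in T_k$, whence $\rho_k T_k=T_k$), and on such a subtorus the two tangent cones agree, both coinciding with the tangent space at $1$, by Example \ref{ex:tau1 torus}. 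Since $\tau_1$ and $\TC_1$ each commute with finite unions, the two constructions agree on all of $\VV^i(X)$. The only subtlety to watch is to confirm that the components not passing through $1$ really drop out of both computations, which is guaranteed by the germ-at-$1$ characterization recorded in \S\ref{subsec:exp tc}.

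Finally, the formality addendum follows by comparison with the general tangent cone formula. If $X$ is $q$-formal, Theorem \ref{thm:tcone} gives $\tau_{1}(\VV^i(X)) = \TC_{1}(\VV^i(X)) = \RR^i(X)$ for all $i\le q$. Combining this with the chain already established forces $\RR^i(A(X)) = \TC_{1}(\VV^i(X)) = \RR^i(X)$, upgrading the inclusion $\RR^i(A(X)) \subseteq \RR^i(X)$ to an equality in the stated range.
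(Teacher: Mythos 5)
Your proposal is correct and follows essentially the same route as the paper: Budur--Wang plus Example \ref{ex:tau1 torus} for the first equality, Theorem \ref{thm:tc model}\,(\ref{tcm2}) for the second, Theorem \ref{thm:mpps-bis} for the inclusion, and Theorem \ref{thm:tcone} for the formality addendum. The only difference is that you spell out why translated subtori through $1$ are honest subtori and why components missing $1$ drop out, details the paper leaves implicit.
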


\begin{proof}
\smartqed
By Theorem \ref{thm:bw}, each irreducible component of $\VV^i(X)$  
passing through $1$ is a complex algebraic subtorus $W\subset \Char(X)$.  
As noted in Example \ref{ex:tau1 torus}, $\tau_1(W)=\TC_1(W)$.  
Since both $\tau_1$ and $\TC_1$ commute with finite unions, 
the first equality in \eqref{eq:tcone qp} follows. 

Next, recall that $A(X)$ is a finite-type, rationally defined $\cdga$ 
which admits positive weights. Moreover, there is a weak equivalence 
between $A(X)$ and $\apl(X)$ preserving the respective $\Q$-structures.  
The second equality now follows from Theorem \ref{thm:tc model}, 
part \ref{tcm2}, while the last inclusion follows from 
Theorem \ref{thm:mpps-bis}.  
\qed
\end{proof}

In particular, the resonance varieties $\RR^i(A(X))$ are finite unions of 
rationally defined linear subspaces of $H^1(X,\C)$.  On the other hand, 
the varieties $\RR^i(X)$ can be much more complicated: for instance, 
they may have non-linear irreducible components. If $X$ is $q$-formal, 
though, Theorem \ref{eq:tcone qp} guarantees this cannot happen, as 
long as $i\le q$. 

\subsection{Resonance in degree $1$}
\label{subsec:res deg1} 

Once again, let $X$ be a smooth, quasi-projective variety, and 
let $A(X)$ be the Gysin model associated to a good compactification 
$\overline{X}$.  The degree $1$ resonance varieties $\RR^1(A(X))$, 
and, to some extent, $\RR^1(X)$, admit a much more precise description 
than those in higher degrees.  

As in the setup from Theorem \ref{thm:arapura}, let $\mathcal{E}_X$ 
be the set of equivalence classes of admissible maps from $X$ to 
curves, and let $f\colon X\to \varSigma$ be such map.  Recall 
from \S\ref{subsec:gysin} that the curve $\varSigma$ 
admits a canonical Gysin model, $A (\varSigma)$.  
As noted in  \cite{DP-ccm}, the induced 
$\cdga$ morphism, $f^{!} \colon A(\varSigma) \to A (X)$, 
is injective. Let $f^*\colon H^1(A(\varSigma)) \to H^1(A (X))$ 
be the induced homomorphism in cohomology.

\begin{theorem}[\cite{DP-ccm, MPPS}]
\label{thm:r1 gysin}
For a smooth, quasi-projective variety $X$, 
the decomposition of $\RR^1(A(X))$ into (linear) irreducible components 
is given by
\begin{equation}
\label{eq:pencils}
\RR^1(A(X))= \bigcup_{f\in \mathcal{E}_X} f^*(H^1(A(\varSigma))).
\end{equation}
\end{theorem}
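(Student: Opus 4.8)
The plan is to reduce the statement to the tangent cone formula of Theorem~\ref{thm:tcone qp}, and then to read off the individual components from Arapura's description (Theorem~\ref{thm:arapura}) of the positive-dimensional components of $\VV^1(X)$ passing through the identity.

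First I would use the chain of equalities in Theorem~\ref{thm:tcone qp} to write $\RR^1(A(X))=\tau_1(\VV^1(X))$. By Theorem~\ref{thm:bw}, the characteristic variety $\VV^1(X)$ is a finite union of torsion-translated subtori of $\Char(X)$. Since $\tau_1$ commutes with finite unions and depends only on the germ of $\VV^1(X)$ at $1$, only those irreducible components $W$ passing through $1$ contribute, giving $\tau_1(\VV^1(X))=\bigcup_{W\ni 1}\tau_1(W)$. A torsion-translated subtorus that contains the identity is an honest subtorus, so by Example~\ref{ex:tau1 torus} each contributing term satisfies $\tau_1(W)=T_1(W)$, the tangent space to $W$ at $1$.

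Next I would invoke Theorem~\ref{thm:arapura}, which identifies the positive-dimensional components $W\ni 1$ precisely with the subtori $f^*(\Char(\varSigma))$ as $f$ ranges over $\mathcal{E}_X$. For such a $W$, the tangent space $T_1\big(f^*(\Char(\varSigma))\big)$ is the image, under the differential at $1$ of $f^*$, of $T_1(\Char(\varSigma))=H^1(\varSigma,\C)$. Under the canonical identifications $H^1(\varSigma,\C)\cong H^1(A(\varSigma))$ and $H^1(X,\C)\cong H^1(A(X))$, this differential is exactly the cohomology homomorphism $f^*\colon H^1(A(\varSigma))\to H^1(A(X))$ induced by the Gysin morphism $f^{!}$, whence $\tau_1(W)=f^*(H^1(A(\varSigma)))$. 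Assembling the contributions yields $\RR^1(A(X))=\bigcup_{f\in\mathcal{E}_X} f^*(H^1(A(\varSigma)))$; the only other component through $1$ can be the isolated point $\{1\}$, whose exponential tangent cone is $\{0\}$, and this is already contained in every linear subspace on the right (so the formula is read with $\{0\}$ understood in the degenerate case $\mathcal{E}_X=\emptyset$, $b_1(X)>0$). Since each $f^{!}$ is injective and, by Theorem~\ref{thm:arapura}, the subtori $f^*(\Char(\varSigma))$ are distinct, their tangent spaces $f^*(H^1(A(\varSigma)))$ are distinct, positive-dimensional linear subspaces, hence the desired irreducible components.

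The step I expect to be the main obstacle is the compatibility claim used above: that the differential at $1$ of the map of character groups $f^*\colon\Char(\varSigma)\to\Char(X)$ agrees, through the respective exponential maps, with the cohomology-level map $f^*\colon H^1(A(\varSigma))\to H^1(A(X))$ coming from $f^{!}$. Establishing this requires the naturality of the Dimca--Papadima analytic isomorphism of Theorem~\ref{thm:dp-ccm} with respect to admissible maps: one must check that the local isomorphisms $\Char(\varSigma)^0_{(1)}\cong H^1(A(\varSigma))_{(0)}$ and $\Char(X)^0_{(1)}\cong H^1(A(X))_{(0)}$ fit into a commuting square with the two versions of $f^*$. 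Granting this functoriality, the tangent spaces match on the nose and the decomposition into irreducible components follows.
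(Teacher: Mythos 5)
Your derivation is essentially the one the surrounding text of the paper is pointing to: the paper itself states Theorem~\ref{thm:r1 gysin} by citation to \cite{DP-ccm, MPPS} without proof, but \S\ref{subsec:res deg1} sets up exactly the ingredients you use (the injective morphism $f^{!}$, the map $f^*$ on $H^1$ of Gysin models, and Theorem~\ref{thm:arapura}), and combining Theorem~\ref{thm:tcone qp} with Theorem~\ref{thm:bw} and Arapura's bijection is the natural route. Your reduction $\RR^1(A(X))=\tau_1(\VV^1(X))=\bigcup_{W\ni 1}\tau_1(W)$ is not circular (the proof of Theorem~\ref{thm:tcone qp} does not use Theorem~\ref{thm:r1 gysin}), and your observation that a torsion-translated subtorus containing $1$ is an honest subtorus is correct.

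Two points deserve more care than you give them. First, the compatibility you flag as the main obstacle splits into two pieces of different difficulty: the differential at $1$ of $f^*\colon\Char(\varSigma)\to\Char(X)$ is elementary (it is precomposition with $f_\sharp$ on $\Hom(\pi_1(-),\C)$, i.e.\ the ordinary $f^*$ on $H^1(-,\C)$); the genuine content is that Morgan's zig-zag of quasi-isomorphisms intertwines $H^1(f^{!})$ with this topological $f^*$, and, prior to that, that $f^{!}$ exists at all --- an admissible map $f\colon X\to\varSigma$ extends only to a rational map $\overline{X}\dashrightarrow\overline{\varSigma}$ for a given good compactification, so one must blow up $\overline{X}$ further to make $f$ regular, which changes the Gysin model (harmlessly at the level of resonance germs, by Corollary~\ref{cor:resvar inv}, but this should be said). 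Second, your last step concludes that the $f^*(H^1(A(\varSigma)))$ are the irreducible components of the union because they are ``distinct''; distinctness of linear subspaces does not by itself rule out one being contained in another. The correct argument is that containment of tangent spaces of subtori implies containment of the subtori themselves (via $\exp$), which would contradict the fact that the $f^*(\Char(\varSigma))$ are distinct \emph{irreducible components} of $\VV^1(X)$, hence mutually non-contained. With these two repairs the argument is complete.
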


If $X$ admits no admissible maps, i.e., $\mathcal{E}_X=\emptyset$, 
formula \eqref{eq:pencils} should be understood to mean $\RR^1(A(X))= \{0\}$ 
if $b_1(X)>0$ and $\RR^1(A(X))= \emptyset$ if $b_1(X)=0$.

\begin{exm}
\label{ex:heis-bis}
Let $X=X_1$  be the complex, smooth quasi-projective surface constructed in 
Example \ref{ex:qp1f}.  Clearly, this manifold is a $\C^*$-bundle over $E=S^1\times S^1$ 
which deform-retracts onto the Heisenberg manifold from Example \ref{ex:heis}.  
Hence, $\VV^1(X)=\{1\}$, and so 
$\tau_{1}(\VV^1(X))=\TC_{1}(\VV^1(X))=\{0\}$.  
On the other hand, $\RR^1(X)=\C^2$, and so $X$ is not $1$-formal.
\hfill $\Diamond$
\end{exm}

Under a $1$-formality assumption, the usual resonance varieties $\RR^1(X)$ 
admit a similar description. 

\begin{theorem}[\cite{DPS-duke}]
\label{thm:res kahler} 
Let $X$ be a smooth, quasi-projective variety, and suppose $X$ is $1$-formal. 
The decomposition into irreducible components of the first resonance 
variety is then given by
\begin{equation}
\label{eq:r1dec}
\RR^1(X)= \bigcup_{f\in \mathcal{E}_X} f^*(H^1(\varSigma,\C)), 
\end{equation}
with the same convention as before when $\mathcal{E}_X = \emptyset$.  
Moreover, all the (rationally defined) linear subspaces in this  
decomposition have dimension at least $2$, and any two 
distinct ones intersect only at $0$. 
\end{theorem}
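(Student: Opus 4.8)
The plan is to derive the decomposition \eqref{eq:r1dec} from the analogous statement for the Gysin model, and then to treat the dimension and transversality claims by geometric input. First I would invoke $1$-formality: since $X$ is $1$-formal, the last inclusion in Theorem \ref{thm:tcone qp} is an equality in degree $i=1$, so $\RR^1(X)=\RR^1(A(X))$. Applying Theorem \ref{thm:r1 gysin} then gives
\[
\RR^1(X)=\RR^1(A(X))=\bigcup_{f\in\mathcal{E}_X} f^*\bigl(H^1(A(\varSigma))\bigr).
\]
To put this in the stated form, I would use that the Gysin model $A(\varSigma)$ of the curve is weakly equivalent to $\apl(\varSigma)$, so that $H^1(A(\varSigma))\cong H^1(\varSigma,\C)$ canonically, and that this identification intertwines the $\cdga$ map $f^{!}$ with the topological pullback $f^*$. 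This yields \eqref{eq:r1dec}; when $\mathcal{E}_X=\emptyset$ the union is empty and the stated convention matches the value of $\RR^1(A(X))$ dictated by whether $b_1(X)>0$.

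Next I would establish the dimension bound. Because an admissible map $f\colon X\to\varSigma$ is surjective with connected generic fiber, the induced map $f_*\colon\pi_1(X)\to\pi_1(\varSigma)$ is surjective, so $f^*\colon H^1(\varSigma,\C)\to H^1(X,\C)$ is injective and $\dim f^*(H^1(\varSigma,\C))=b_1(\varSigma)$. A direct Euler-characteristic count then gives $b_1(\varSigma)\ge 2$: if $\varSigma$ is compact of genus $g$, then $\chi(\varSigma)=2-2g<0$ forces $g\ge 2$ and hence $b_1(\varSigma)=2g\ge 4$; if $\varSigma$ is open, then $\chi(\varSigma)=1-b_1(\varSigma)<0$ forces $b_1(\varSigma)\ge 2$. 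Rationality of the subspaces $f^*(H^1(\varSigma,\C))$ follows since the Gysin model and the maps $f^{!}$ are defined over $\Q$.

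The main obstacle is the transversality assertion, that distinct components meet only at $0$. Here I would pass to the characteristic-variety side. By Theorem \ref{thm:arapura}, the assignment $f\mapsto T_f:=f^*(\Char(\varSigma))$ is a bijection onto the positive-dimensional irreducible components of $\VV^1(X)$ through $1$, and each $T_f$ is a subtorus whose tangent space at the identity is $L_f:=f^*(H^1(\varSigma,\C))$. Thus $L_f\cap L_g=\{0\}$ for $f\ne g$ is equivalent to $T_f\cap T_g$ being finite. The crux is to show that a nonzero class $a\in L_f$ determines the pencil $f$ uniquely: for such $a$ the annihilator $\{b\in H^1(X,\C): a\cup b=0\}$ has dimension at least $2$, and a Castelnuovo--de~Franchis type argument (in the formal setting afforded by the Gysin model, using the cup-product rank that characterizes a pencil subspace) recovers the admissible map $f$ from the maximal such subspace through $a$. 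Hence no nonzero $a$ can lie on two distinct components, which forces $L_f\cap L_g=\{0\}$. This uniqueness step is the genuinely hard input; the remaining assertions are formal consequences of the cited theorems. For the complete argument I refer to \cite{DPS-duke}.
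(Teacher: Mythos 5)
The paper itself offers no proof of this theorem --- it is quoted from \cite{DPS-duke} --- so there is nothing internal to compare against line by line. Judged on its own terms, your derivation of the decomposition \eqref{eq:r1dec} is a legitimate way to assemble it from the other results the survey quotes: $1$-formality upgrades the inclusion in Theorem \ref{thm:tcone qp} to $\RR^1(X)=\RR^1(A(X))$, Theorem \ref{thm:r1 gysin} gives the union over $\mathcal{E}_X$, and the identification $H^1(A(\varSigma))\cong H^1(\varSigma,\C)$ is supplied by Morgan's quasi-isomorphism together with the functoriality of $f^{!}$. Your dimension count ($\chi(\varSigma)<0$ forces $b_1(\varSigma)\ge 2$ in both the compact and open cases, and $f^*$ is injective because $f_*$ is surjective on $\pi_1$) and the rationality remark are correct. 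Be aware, though, that this inverts the historical logic: in \cite{DPS-duke} the present theorem is proved first (from Arapura's theory, the tangent cone formula, and isotropic subspace theorems), and Theorem \ref{thm:r1 gysin} is a later refinement; within a survey that treats all these statements as given, your ordering is harmless.

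The one genuine soft spot is the transversality claim. Your reduction is right: since $T_f=f^*(\Char(\varSigma_f))$ and $T_g$ are algebraic subgroups with Lie algebras $L_f$ and $L_g$, one has $L_f\cap L_g=\{0\}$ exactly when $T_f\cap T_g$ is finite. But the Castelnuovo--de~Franchis sketch you give does not close the argument: you would need to prove that a nonzero $a$ lies in a \emph{unique} maximal isotropic subspace, which is essentially the statement being proved. There is a more direct argument you could substitute. For distinct $f,g\in\mathcal{E}_X$, the map $h=(f,g)\colon X\to \varSigma_f\times\varSigma_g$ must have two-dimensional image (if the image were a curve, both $f$ and $g$ would factor through its normalization and would coincide in $\mathcal{E}_X$ after reparametrization); a dominant morphism induces a homomorphism $h_*$ on fundamental groups with finite-index image, hence an injection $h^*$ on $H^1(-,\C)$, and injectivity of $h^*=f^*\oplus g^*$ on $H^1(\varSigma_f)\oplus H^1(\varSigma_g)$ is precisely the statement $f^*(H^1(\varSigma_f,\C))\cap g^*(H^1(\varSigma_g,\C))=\{0\}$. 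With that replacement your outline becomes a complete proof modulo the quoted theorems.
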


If $X$ is compact, then the formality assumption 
in the above theorem is automatically satisfied, due to \cite{DGMS}.  
Furthermore, the conclusion of the theorem can also 
be sharpened in this case: each (non-trivial) irreducible component 
of $\RR^1(X)$ is even-dimensional, of dimension at least $4$.   

In general, though, the resonance varieties $\RR^1(X)$ can have 
non-linear components.  
For instance, if $X=\Conf(E,n)$ is the configuration space of $n\ge 3$ 
points on an elliptic curve $E$, then $\RR^1(X)$ is an irreducible, 
non-linear variety (in fact, a rational normal scroll), see \cite{DPS-duke} 
and also Example \ref{ex:conf torus} below. 

\section{Hyperplane arrangements and the Milnor fibration}
\label{sect:arr mf}

Next, we turn our focus to a class of quasi-projective varieties 
which are obtained by deleting finitely many hyperplanes from 
a complex affine space.  These hyperplane arrangement complements 
are formal spaces, yet the associated Milnor fibers may fail the 
Tangent Cone test for formality. 

\subsection{Complement and intersection lattice}
\label{subsec:arr stuff}

A hyperplane arrangement $\A$ is a finite collection 
of codimension $1$ linear subspaces in a complex affine 
space $\C^{n}$.   Its complement, 
$M(\A)=\C^{n}\setminus\bigcup_{H\in \A}H$, 
is a connected, smooth, quasi-projective variety. 
This manifold is a Stein domain, and thus has the 
homotopy-type of a finite CW-complex of dimension $n$. 
Moreover, $M(\A)\cong U(\A)\times \C^*$, where $U(\A)$ 
is the complement in $\CP^{n-1}$ of the projectivized arrangement. 

The topological invariants of the complement are intimately tied  
to the combinatorics of the arrangement.  The latter 
is encoded in the {\em intersection lattice}, $L(\A)$, which 
is the poset of all intersections of 
$\A$, ordered by reverse inclusion. The rank 
of the arrangement, denoted $\rk( \A )$, is the 
codimension of the intersection $\varSigma(\A)=\bigcap_{H\in \A} H$. 

\begin{exm}
\label{ex:br}
The braid arrangement of rank $n-1$ 
consists of the diagonal hyperplanes $H_{ij}=\{z_i-z_j=0\}$ 
in $\C^{n}$. The complement of this arrangement is the configuration 
space $\Conf(\C,n)$, while the intersection lattice is the lattice 
of partitions of $[n]=\set{1,\dots,n}$, ordered by refinement.  
\hfill $\Diamond$
\end{exm}

For each hyperplane $H\in \A$, pick a linear form $f_H\in \C[z_0,\dots ,z_n]$ 
such that  $\ker(f_H)=H$.   The homogeneous polynomial $Q(\A)=\prod_{H\in \A} f_H$, 
then, is a defining polynomial for the arrangement. 

Building on work of Arnol'd on the 
cohomology ring of $\Conf(\C,n)$,  Brieskorn 
showed in \cite{Br} that the closed $1$-forms ${\D{f}_H}/{f_H}$ 
generate the de~Rham cohomology of $M(\A)$.  Moreover,  
the inclusion of the subalgebra generated by those forms 
into  the de~Rham algebra $\varOmega^*_{\rm dR}(M(\A))$ 
induces an isomorphism in cohomology; consequently, 
the complement $M(\A)$ is a formal space.

In \cite{OS}, Orlik and Solomon gave a simple 
combinatorial description of the cohomology ring of the 
complement. Let $E=\bigwedge(\A)$ be the exterior algebra 
(over $\Z$) on degree-one classes $e_H$ 
dual to the meridians around the hyperplanes $H\in \A$, and set 
$e_\B=\prod_{H\in \B} e_H$ for each sub-arrangement $\B\subset \A$.  
Next, define a differential $\partial \colon E\to E$ 
of degree $-1$, starting from 
$\partial(e_H)=1$, and extending it to 
a linear map on $E$, using the graded Leibniz rule. 
Then
\begin{equation}
\label{eq:os}
H^*(M(\A),\Z)= \bigwedge(\A)/I(\A), 
\end{equation}
where $I(\A)$ is the (homogeneous) ideal generated by all 
elements of the form 
\begin{align}
\label{eq:osrels}
\hsp &e_{\B}, && \text{if $\varSigma(\B) =\emptyset$}, \hsp \\ \notag
\hsp &\partial e_{\B}, && \text{if $\codim \varSigma(\B)  < \abs{\B}$}. \hsp
\end{align}

More generally, suppose $\A$ is an arrangement of complex linear 
subspaces in $\C^n$.  Using a blow-up construction, De~Concini 
and Procesi gave in \cite{dCP95} a `wonderful' $\cdga$ model for 
the complement of such an arrangement. Based on a simplication 
of this model due to Yuzvinsky \cite{Yu}, Feichtner and Yuzvinsky 
showed in \cite{FeYu} the following:  If $L(\A)$ is a geometric lattice, 
then the complement of $\A$ is a formal space.  In general, 
however, the complement of a complex subspace arrangement 
need not be formal.  This phenomenon is illustrated in  
\cite{DeS07}, within the class of coordinate subspace arrangements, and 
in \cite{MW}, within the class of diagonal subspace arrangements. 

\subsection{Cohomology jump loci of the complement}
\label{subsec:cjl arr}

Once again, let $\A$ be a complex hyperplane arrangement.  
The resonance varieties of the arrangement, $\RR^i(\A):=\RR^i(M(\A))$, 
live inside the affine space $H^1(M(\A),\C)=\C^{\abs{\A}}$.   These 
varieties depend only on the Orlik--Solomon algebra of $\A$, and 
thus, only on the intersection lattice $L(\A)$.  

In \cite{Fa97}, Falk asked whether the resonance varieties $\RR^i(\A)$ 
are finite unions of linear subspaces.  A special case of the Tangent Cone 
theorem, proved in \cite{CS99} specifically for arrangement complements 
and in degree $i=1$, led to a positive answer to this question, at least 
for $\RR^1(\A)$.  With the technology provided by the general 
version of the Tangent Cone theorem, it is now easy to answer Falk's 
question in full generality.   Indeed, since the complement $M(\A)$ is 
a formal space, Corollary \ref{cor:rational} shows that $\RR^i(\A)$ is, 
in fact, a finite union of rationally defined linear subspaces, for each $i\ge 0$. 

In degree $i=1$, these linear spaces can be described much more precisely. 
Indeed, as shown by Falk and Yuzvinsky in \cite{FY} (see also \cite{LY, Mb}), 
each component of $\RR^1(\A)$ corresponds to a multinet on a 
sub-arrange\-ment of $\A$.  

Briefly, a {\em $k$-multinet}\/ on $\A$ is a partition into 
$k\ge 3$ subsets $\A_{\alpha}$, together with an assignment 
of multiplicities $m_H$ to each $H\in \A$, and a choice of rank $2$ 
flats, called the base locus.  All these data must satisfy certain 
compatibility conditions. For instance, any two hyperplanes from 
different parts of the partition intersect in the base locus, while the sum 
of the multiplicities over each part  is constant.  Furthermore, if 
$X$ is a flat in the base locus,  then the sum 
$n_{X}=\sum_{H\in\A_\alpha\cap \A_X} m_H$ is independent 
of $\alpha$.  The multinet is {\em reduced}\/ if all the 
$m_H$'s are equal to $1$.  If, moreover, all the 
$n_X$'s are equal to $1$, the multinet is, in fact, a 
{\em net}, a classical notion from combinatorial geometry.

Every $k$-multinet on $\A$ gives rise to an admissible map $M(\A)\to \varSigma$, 
where $\varSigma=\CP^1\setminus \{\text{$k$ points}\}$, and the converse 
also holds. Moreover, the set $\mathcal{E}_{M(\A)}$ of admissible maps 
(up to reparametrization at the target) from $M(\A)$ to curves coincides  
with the set of multinets (up to the natural $S_k$-permutation action on 
$k$-multinets) on subarrangements of $\A$, see \cite{FY, PS-beta}.  
The {\em essential}\/ components of $\RR^1(A)$ are those corresponding 
to multinets fully supported on $\A$.

\begin{exm}
\label{ex:braid arr}
Let $\A$ be a generic $3$-slice of the braid 
arrangement of rank $3$, with defining polynomial 
$Q(\A)=z_0z_1z_2(z_0-z_1)(z_0-z_2)(z_1-z_2)$. 
Take a generic plane section, and label the 
corresponding lines as $1$ to $6$. 
Then, the variety $\RR^{1}(\A)\subset \C^6$ has $4$ 
`local' components, corresponding to the triple 
points $124, 135, 236, 456$, and one essential  
component, corresponding to the $3$-net $(16| 25 | 34)$.  
\hfill $\Diamond$
\end{exm}

From Theorem \ref{thm:arapura}, we know that the characteristic 
varieties $\VV^i(\A):=\VV^i(M(\A))$ consists of subtori in $(\C^{*})^n$, possibly 
translated by roots of unity, together with a finite number of torsion points.   
By Theorem \ref{thm:tcone}, we have that 
$\TC_1(\VV^i(\A))=\RR^i(A)$. Thus, the components 
of $\VV^i(\A)$ passing through the origin are completely determined 
by $L(\A)$.  

As pointed out in \cite{Su02}, though, the characteristic variety 
$\VV^1(\A)$ may contain translated subtori, that is, components 
not passing through $1$.  It is still 
not known whether such components are combinatorially determined.

\subsection{The Milnor fibration}
\label{subsec:mf arr}

Once again, let $\A$ be a hyperplane arrangement in $\C^n$, 
with complement $M=M(\A)$ and defining polynomial 
$Q=Q(\A)$. As shown by Milnor \cite{Mi} in a more general context, 
the restriction of the polynomial map $Q\colon \C^n \to \C$ to the complement
is a smooth fibration, $Q\colon  M\to \C^*$. 

The typical fiber of this fibration, $Q^{-1}(1)$, is called 
the {\em Milnor fiber}\/ of the arrangement, and is denoted 
by $F=F(\A)$.  The Milnor fiber is a Stein domain of complex 
dimension $n$, and thus has the homotopy type of a finite 
CW-complex of dimension $n$. Furthermore, the monodromy 
homeomorphism $h\colon F\to F$ is given by $h(z)=\E^{2 \pi \I/m}z$, 
where $m=\abs{\A}$, and thus has order $m$.  

\begin{exm}
\label{ex:boolean}
The Boolean arrangement consists of the coordinate 
hyperplanes in $\C^n$; its complement is the 
complex algebraic torus $(\C^*)^{n}$.  The  
map $Q\colon (\C^*)^n \to \C^*$, 
$z\mapsto z_1\cdots z_{n}$ is 
a morphism of algebraic groups. Hence, the 
Milnor fiber $F= \ker Q$ is an algebraic subgroup, 
isomorphic to $(\C^*)^{n-1}$.
\hfill $\Diamond$
\end{exm} 

\begin{exm}
\label{ex:pencil}
Consider a pencil of $m$ lines in $\C^2$, 
with defining polynomial $Q=z_1^m-z_2^m$ and 
complement $M=\C^* \times \C\setminus \{\text{$m$ points}\}$. 
The Milnor fiber, then, is a smooth complex curve of genus 
$\binom{m-1}{2}$ with $m$ punctures. 
\hfill $\Diamond$
\end{exm} 

In general, though, the polynomial map $Q\colon \C^n\to \C$ will 
have a non-isolated singularity at $0$, and the topology of the 
Minor fiber $F=F(\A)$ will be much more difficult to ascertain. In particular, 
it is a long-standing open problem to decide whether the first Betti number $b_1(F)$ 
is determined by the intersection lattice of $\A$, and, if so, to find an explicit 
combinatorial formula for it,  see for instance \cite{CS95, DeS14, 
Di11, DP-pisa, Li12, Su-pau}.  Marked progress towards a 
positive solution to this problem was made recently in \cite{PS-beta}, 
using in an essential way the relationship between the varieties 
$\VV^1(\A)$ and $\RR^1(\A)$ provided by the Tangent Cone theorem, 
as well as the multinet interpretation of the components of $\RR^1(\A)$. 

To make this machinery work, one starts by viewing the 
Minor fiber $F$ as the regular, cyclic $m$-fold cover of the 
projectivized complement, $U=\PP(M)$, defined by 
the homomorphism $\delta\colon \pi_1(U)\to \Z_m$ which 
takes each meridian generator to $1$, see \cite{CS95} and also 
\cite{Su-conm, Su-pau}.  Embedding $\Z_m$ into $\C^*$ by 
sending $1\mapsto \E^{2 \pi \I/m}$, we may 
view $\delta$ as a character on $\pi_1(U)$. The relative 
position of this character with respect to the  
variety $\VV^1(U)\cong \VV^1(\A)$ determines the first Betti 
number of $F$, as well as the characteristic polynomial 
of the algebraic monodromy, $h_*\colon H_1(F,\C)\to H_1(F,\C)$.  

\subsection{Cohomology jump loci of the Milnor fiber}
\label{subsec:cvmf}

Very little is known about the homology with coefficients in 
rank $1$ local systems of the Milnor fiber of an 
arrangement $\A$.  Since $M(\A)$ is a smooth, quasi-projective 
variety, Theorem \ref{thm:bw}  
guarantees that the characteristic varieties $\VV^i(F(\A))$ 
are finite unions of torsion-translated subtori.  

Let $\pi\colon F(\A)\to U(\A)$ be the restriction of the Hopf fibration 
to the Milnor fiber.  Since $\pi$ is a finite, regular cover, we have that 
$\pi^*(\VV^i(U(\A)))\subseteq \VV^i(F(\A))$.  In general, though, this 
inclusion may well be strict. For instance, suppose $\A$ admits a 
non-trivial, reduced multinet, and let $T$ be the corresponding 
component of $\VV^1(\A)$.  Then, as shown in \cite{DP-pisa}, 
the variety $\VV^1(F(\A))$ has an irreducible component passing 
through the identity and containing $\pi^*(T)$ as a proper subset. 

\begin{exm}
\label{ex:cv mf braid}
Let $\A$ be the braid arrangement from Example \ref{ex:braid arr}.   
Recall that $\VV^1(\A)$ has four $2$-dimensional components, 
$T_1,\dots, T_4$, corresponding to the triple points, 
and also an essential, $2$-dimensional component $T$, corresponding 
to a $3$-net. The characteristic variety $\VV^1(F(\A))\subset (\C^*)^7$ 
has four $2$-dimensional components, $\pi^*(T_1), \dots ,\pi^*(T_4)$, 
as well as $4$-dimensional component $W$ which properly includes 
the $2$-torus $\pi^*(T)$.  
\hfill $\Diamond$
\end{exm}

Returning to the general situation, let again $\A$ be a complex hyperplane 
arrangement, and let $F=F(\A)$ be its Milnor fiber. By Theorem \ref{thm:tcone qp},  
$\TC_1(\VV^i(F))=\RR^i(A(F))$, where $A(F)$ is a Gysin model for $F$.  
Thus, to better understand the topology of the Milnor fiber, it would help  
a lot to address the following two problems. 

\begin{prb}
\label{prb:compactify mf}
Find a smooth compactification $\overline{F}$ 
such that $\overline{F}\setminus F$ is a normal-crossings divisor. 
Does the monodromy $h\colon F\to F$ extend to a diffeomorphism 
$\bar{h}\colon \overline{F}\to \overline{F}$?
\end{prb}

\begin{prb}
\label{prb:model mf}
Given a compactification $\overline{F}$ as above, 
write down an explicit presentation for the resulting 
Gysin model, $A(F)$.  Furthermore, compute the resonance varieties 
$\RR^i(A(F))$, and decide whether these varieties depend 
only on the intersection lattice $L(\A)$.
\end{prb}

\subsection{Formality of the Milnor fiber}
\label{subsec:zuber}

The following question was raised in \cite{PS-formal}, 
in a more general context:  Is the Milnor fiber  of a 
hyperplane arrangement $\A$ always formal?   
Of course, if $\rank(\A)=2$, then $F(\A)$ has the homotopy 
type of a wedge of circles, and so it is formal. 

If $\rank(\A)=3$, formality and $1$-formality are equivalent 
for the Milnor fiber, 
since in this case $F(\A)$ has the homotopy type of a $2$-complex. 
As noted by Dimca and Papadima \cite{DP-pisa}, 
if the monodromy map acts as the identity 
on $H_1(F(\A), \C)$, then $F(\A)$ is formal. 
In general, though, the Milnor fiber of an arrangement 
is not formal, as the following example of Zuber \cite{Zu} shows.  

\begin{exm}
\label{ex:zuber}
Let $\A$ be the arrangement associated to the complex 
reflection group $G(3,3,3)$, and defined by the polynomial 
$Q=(z_1^3-z_2^3)(z_1^3-z_3^3)(z_2^3-z_3^3)$. 
The resonance variety $\RR_1(\A)\subset \C^9$  has  
$12$ local components, corresponding to the triple points, 
and $4$ essential components corresponding to $3$-nets.

Consider the $3$-net whose associated rational map 
$\CP^2 \dashrightarrow \CP^1$ is given by 
$(z_1,z_2,z_3) \mapsto (z_1^3-z_2^3,z_2^3-z_3^3)$. 
This map restricts to an admissible map 
$U(\A)\to \CP^1 \setminus \{ (1,0), (0,1), (1,-1) \}$.  
Let $T$ be the essential, $2$-dimensional component 
of $\VV_1(U(\A))$ obtained by pullback along this pencil. 
Further pulling back $T$ via the covering projection 
$\pi\colon F(\A)\to U(\A)$ produces a $4$-dimensional subtorus 
inside $\Char(F(\A))=(\C^*)^{12}$.  

The subtorus $\pi^*(T)$ is of the form 
$\exp(L)$, for some linear subspace $L\subset H^1(F(\A),\C)$. 
Using the mixed Hodge structure on the cohomology of the 
Milnor fiber, Zuber showed in \cite{Zu} that $L$ 
cannot possibly be a component of the resonance variety $\RR^1(F(\A))$.    
Thus, the tangent cone formula from Theorem \ref{thm:tcone} is 
violated, and so the Milnor fiber $F(\A)$ is not $1$-formal. 
\hfill $\Diamond$
\end{exm}

In related work, Fern\'andez de Bobadilla gave in  \cite{FdB} an example 
of a quasi-homogeneous polynomial whose Milnor fibration has 
trivial geometric monodromy and whose Milnor fiber is simply-connected, 
yet non-formal. 

Zuber's example naturally leads to the following problem.  

\begin{prb}
\label{prb:mf tc}
Given a rank $3$ arrangement $\A$, decide whether the tangent 
cone formula holds for the Milnor fiber $F(\A)$.  Is this enough 
to guarantee that $F(\A)$ is formal? 
\end{prb}

\section{Elliptic arrangements}
\label{sect:elliptic}

We conclude with another class of arrangements, this time 
lying in a product of elliptic curves.   An especially convenient 
algebraic model is available for complements of 
`unimodular'  elliptic arrangements. Comparing the resonance 
varieties of this model to those of its cohomology algebra 
shows that complements of elliptic arrangements may be 
non-formal.

\subsection{Complements of elliptic arrangements}
\label{subsec:comp elliptic}

Let $E=\C/\Z^2$ be an elliptic curve.  We denote by 
$E^{\times n}$ be the $n$-fold product of such a curve.  
This is an abelian variety, with group law inherited from 
addition in $\C^n$.

An {\em elliptic arrangement}\/ in $E^{\times n}$ is a finite 
collection of fibers of group homomorphisms from $E^{\times n}$ to $E$.   
Each ``elliptic hyperplane" $H\subset E^{\times n}$ 
may be written as $H=f^{-1}(\zeta)$, for some point $\zeta\in E$ and 
some homomorphism $f\colon E^{\times n}\to E$ given by 
\begin{equation}
\label{eq:hom}
f(z_1,\ldots,z_n)=\sum_{j=1}^n c_{j} z_j, 
\end{equation}
where $c_j\in \Z$.  Thus, an 
arrangement $\A=\{H_1,\dots , H_m\}$ in $E^{\times n}$ 
is determined by an integral $m\times n$ matrix $C=(c_{ij})$ 
and a point $\zeta=(\zeta_1,\dots, \zeta_m)\in E^{\times m}$. 
We will write $\corank(\A):=n-\rank(C)$ and say that $\A$ is {\em essential}\/ 
if its corank is zero.

Let $L(\A)$ denote the collection of all connected components of
intersections of zero or more elliptic hyperplanes from $\A$.  
Then $L(\A)$ forms a finite poset under inclusion.  We say that 
$\A$ is {\em unimodular}\/ if all subspaces in $L(\A)$ are connected.  

Now let $M(\A)=E^{\times n} \setminus \bigcup_{H\in \A} H$ 
be the complement of our elliptic arrangement. This space  
is a smooth, quasi-projective variety.  Moreover, as shown in \cite{DSY}, 
the complement $M(\A)$ has the homotopy type of a CW-complex of 
dimension $n+r$, where $r=\corank(\A)$. Furthermore, if $r=0$, then 
$M(\A)$ is a Stein manifold. 

\subsection{An algebraic model}
\label{subsec:model elliptic}

Using the spectral sequence analyzed by Totaro in \cite{To96}, Bibby 
constructs in \cite{Bi} an algebraic model for the complement of a 
unimodular elliptic arrangement.  (An alternative approach is given by Dupont 
in \cite{Du}.)   Let us briefly review this construction, which 
generalizes the Gysin model of $E^*=E\setminus \{0\}$ 
described in Example \ref{ex:elliptic gysin}.

Let $a,b$ be the standard generators of $H^1(E,\Z)=\Z^2$.  
Applying the K\"{u}nneth formula, we may identify the cohomology ring 
$H^*(E^{\times n}, \Z)$ with the exterior algebra 
$\bigwedge (a_1,b_1, \dots ,a_n, b_n)$.   
For a homomorphism $f\colon E^{\times n}\to E$ as 
in \eqref{eq:hom}, the induced homomorphism in cohomology, 
$f^*\colon H^*(E,\Z)\to H^*(E^{\times n}, \Z)$, is given  by 
\begin{equation}
\label{eq:fstar}
f^*(a)=\sum_{j=1}^n c_j a_j, \quad f^*(b)=\sum_{j=1}^n  c_j b_j.
\end{equation}

Given an arrangement $\A=\{H_1,\dots , H_m\}$ in $E^{\times n}$, 
realize each elliptic hyperplane $H_i$ as a coset of the kernel of a homomorphism 
$f_i\colon E^{\times n}\to E$.  Next, consider the graded algebra 
\begin{equation}
\label{eq:ba}
A_{\Z}(\A)=\bigwedge\nolimits (a_1,b_1, \dots , 
a_n, b_n, e_1,\dots, e_{m})/I(\A), 
\end{equation}
where $I(\A)$ is the (homogeneous) ideal generated by the 
Orlik--Solomon relations \eqref{eq:osrels} among the 
generators $e_i$, together with the elements 
\begin{equation}
\label{eq:brels}
f_i^*(a) e_i, \ f_i^*(b) e_i, \qquad 1\le i\le m.
\end{equation}

Define a degree $1$ differential $\D$ on $A_{\Z}(\A)$ by setting 
$\D{a_i}=\D{b_i}=0$ and 
\begin{equation}
\label{eq:bdiff}
\D{e_i}=f_i^*(a) \wedge f_i^*(b) ,
\end{equation}
and extending $\D$ to the whole algebra by the graded Leibniz rule. 
Finally, let $A(\A)=A_{\Z}(\A)\otimes \C$, and extend $\D$ to $A(\A)$ 
in the obvious way.

\begin{theorem}[\cite{Bi}]
\label{thm:bibby}
Let $\A$ be a unimodular elliptic arrangement, and  
let $(A(\A),d)$ be the (rationally defined) $\cdga$ constructed above. 
There is then a weak equivalence $\apl(M(\A))\simeq A(\A)$ preserving 
$\Q$-structures.  
\end{theorem}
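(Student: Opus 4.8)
The plan is to recognize the $\cdga$ $A(\A)$ as a concrete, combinatorial incarnation of Morgan's Gysin model for the pair $(E^{\times n}, D)$, where $D=\bigcup_{i=1}^{m}H_i$, and then to invoke the theorem of Morgan \cite{Mo} recalled in \S\ref{subsec:gysin}. Since the abelian variety $\overline{X}=E^{\times n}$ is already smooth and projective, it serves as a good compactification of $M(\A)$, and the whole argument reduces to two tasks: checking that $D$ is a normal-crossings divisor whose strata are fully understood, and identifying the bigraded $\cdga$ built from those strata with $A(\A)$.

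First I would analyze the strata. For a subset $S\subseteq\{1,\dots,m\}$, set $D_S=\bigcap_{k\in S}H_k$; this is a coset of $\bigcap_{k\in S}\ker f_k$. The unimodularity hypothesis --- that every subspace in $L(\A)$ is connected --- guarantees that whenever $D_S\ne\emptyset$ it is a single translate of an abelian subvariety, smooth of codimension exactly $\abs{S}$ precisely when the homomorphisms $\{f_k\}_{k\in S}$ are independent. Consequently $D$ is a simple normal-crossings divisor, its stratification is governed by the poset $L(\A)$, and the restriction map $H^{\hdot}(E^{\times n})\to H^{\hdot}(D_S)$ is surjective with kernel generated by $\{f_k^*(a),f_k^*(b):k\in S\}$, so that $H^{\hdot}(D_S)\cong\bigwedge(a_1,b_1,\dots,a_n,b_n)/(f_k^*(a),f_k^*(b):k\in S)$.

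The heart of the matter is the algebraic identification. Morgan's model has underlying bigraded space $A^{p,q}=\bigoplus_{\abs{S}=q}H^p(D_S)(-q)$, and I would match it, summand by summand, with the $e$-degree decomposition of $A(\A)$: a monomial $a_I b_J e_S$ (with $a_I=\prod_{i\in I}a_i$, and so on) sits in bidegree $(p,q)=(\abs{I}+\abs{J},\abs{S})$; the Orlik--Solomon relations $e_{\B}$ and $\partial e_{\B}$ of \eqref{eq:osrels} annihilate exactly the terms indexed by empty or dependent intersections, while the relations $f_i^*(a)e_i=f_i^*(b)e_i=0$ of \eqref{eq:brels} implement, on each remaining summand, the passage from $H^{\hdot}(E^{\times n})$ to $H^{\hdot}(D_S)$ described above. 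One then verifies that the product $A^{p,q}\cdot A^{p',q'}\subseteq A^{p+p',q+q'}$ agrees with cup product followed by the appropriate Gysin map, and that the differential $\D{e_i}=f_i^*(a)\wedge f_i^*(b)$ of \eqref{eq:bdiff} reproduces Morgan's differential $A^{p,q}\to A^{p+2,q-1}$: its right-hand side is the divisor class $[H_i]=f_i^*[\zeta_i]\in H^2(E^{\times n})$, which is exactly the image of the fundamental class of $D_i$ under the Gysin pushforward, and the Leibniz rule propagates this identity across all of $A(\A)$. Granting these matchings, $A(\A)$ is isomorphic as a $\cdga$ to $A(E^{\times n},D)$, whereupon Morgan's theorem supplies the weak equivalence $\apl(M(\A))\simeq A(\A)$ preserving $\Q$-structures (with positive weights assigned by $p+2q$).

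The step I expect to be the main obstacle is this algebraic identification --- in particular, confirming that the ideal $I(\A)$ carves out precisely the Gysin model together with its multiplicative and differential structure, which requires careful bookkeeping of the sign and orientation conventions entering the Gysin maps. A secondary, more routine difficulty is the normal-crossings verification of the previous step, where unimodularity is used in an essential way: without the connectedness of the subspaces in $L(\A)$, a nonempty $D_S$ could split into several components and the clean one-summand-per-subset description of both the strata and of $A(\A)$ would fail. As a consistency check --- and as the route actually taken in \cite{Bi} following \cite{To96} --- the Leray spectral sequence for the open inclusion $M(\A)\hookrightarrow E^{\times n}$ has $E_2$-page isomorphic to $A(\A)$; establishing its degeneration and multiplicativity yields the same cohomological conclusions and corroborates the strata computation.
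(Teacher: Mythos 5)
The central claim of your argument --- that unimodularity makes $D=\bigcup_{i=1}^{m}H_i$ a simple normal-crossings divisor in $E^{\times n}$, so that $A(\A)$ is literally Morgan's Gysin model $A(E^{\times n},D)$ and Morgan's theorem \cite{Mo} can be invoked on the nose --- is false, and the proof does not survive without it. Unimodularity controls the \emph{connectedness} of the intersections $D_S$, not the local transversality of the branches of $D$: normal crossings forbids more than $\codim$-many branches through any point of a stratum, i.e., it forbids dependent subsets $\B\subseteq\A$ with $\varSigma(\B)\ne\emptyset$, and these occur for essentially every interesting elliptic arrangement. Already in Example \ref{ex:conf torus} (a unimodular arrangement, treated in the paper via this very theorem) the three elliptic hyperplanes $z_1=0$, $z_2=0$, $z_1=z_2$ all pass through $(0,0)\in E^{\times 2}$, and three smooth divisors through a point of a surface are never in normal crossing. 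The Orlik--Solomon relations \eqref{eq:osrels} that you propose to ``match'' with Morgan's model are in fact the symptom of this failure: for a genuine normal-crossings divisor the summands $H^p(D_S)(-q)$ for distinct $S$ are independent and no such relations appear, whereas the relation $(e_1-e_2)(e_1-e_3)$ in that example exists precisely because $D_{\{1,2\}}=D_{\{1,3\}}=D_{\{2,3\}}$ is a single, non-transverse point. So $A(\A)$ is not $A(E^{\times n},D)$ in Morgan's sense, and the construction you want to compare it with is not even defined for this compactification.

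What you relegate to a ``consistency check'' is in fact the proof. As the paper records in \S\ref{subsec:model elliptic}, Bibby's argument runs the Leray spectral sequence of the inclusion $M(\A)\hookrightarrow E^{\times n}$ in the spirit of Totaro \cite{To96}: one identifies the relevant page, with its multiplicative structure and differential, with $(A(\A),d)$ --- this is where connectedness of the elements of $L(\A)$ enters, to get one summand per flat --- and then one must prove not only degeneration but that this page is weakly equivalent to $\apl(M(\A))$ \emph{as a $\cdga$}, which is strictly more than the ``same cohomological conclusions'' your last sentence settles for. The honest repair of your compactification route is Dupont's \cite{Du}: blow up to a compactification where the divisor does become normal crossings (or work with hypersurface arrangements that only locally look like hyperplane arrangements), apply Morgan there, and descend; that is where the Orlik--Solomon relations genuinely come from. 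Either way, a substantial ingredient beyond ``apply Morgan to $(E^{\times n},D)$'' is required. (For completeness: the paper itself offers no proof of this statement; it is quoted from \cite{Bi}.)
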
 

In particular, we have an isomorphism 
$H^{\hdot}(M(\A),\C)\cong H^{\hdot}(A(\A),\D)$. 
Using this result, we obtain the following form of 
the tangent cone theorem for elliptic arrangements 
(the analogue of Theorem \ref{thm:tcone qp} in this context). 

\begin{theorem}
\label{thm:tcone elliptic}
Let $\A$ be a unimodular elliptic arrangement.  Then, for each $i\ge 0$, 
\begin{equation}
\label{eq:tcone ell}
\tau_{1}(\VV^i(M(\A))) =  \TC_{1}(\VV^i(M(\A))) = \RR^i(A(\A)) \subseteq  \RR^i(M(\A)), 
\end{equation}
with equality for $i\le q$ if $M(\A)$ is $q$-formal.   
\end{theorem}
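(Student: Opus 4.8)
The plan is to follow the same three-step structure as the proof of Theorem~\ref{thm:tcone qp}, substituting Bibby's explicit model $A(\A)$ from Theorem~\ref{thm:bibby} for the Gysin model of a good compactification. The chain \eqref{eq:tcone ell} splits into three assertions, which I would treat in turn.

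For the leftmost equality I would use that $M(\A)$ is a smooth, quasi-projective variety. Theorem~\ref{thm:bw} then guarantees that each $\VV^i(M(\A))$ is a finite union of torsion-translated subtori of $\Char(M(\A))$, so every irreducible component passing through the identity is a genuine algebraic subtorus $W$. By Example~\ref{ex:tau1 torus}, $\tau_1(W)=\TC_1(W)$ for each such $W$; since both operations commute with finite unions, the equality $\tau_1(\VV^i(M(\A)))=\TC_1(\VV^i(M(\A)))$ follows at once.

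The middle equality and the rightmost inclusion are applications of the earlier machinery, once $A(\A)$ is known to satisfy the right hypotheses. Theorem~\ref{thm:bibby} furnishes a $\Q$-compatible weak equivalence $\apl(M(\A))\simeq A(\A)$ with $A(\A)$ rationally defined; this is a genuine weak equivalence rather than a mere $q$-equivalence, so the cited results will apply in every degree. Finite-typeness is clear, since $A(\A)$ is a quotient of an exterior algebra on $2n+m$ generators. Granting in addition that $A(\A)$ has positive weights, the middle equality $\TC_1(\VV^i(M(\A)))=\RR^i(A(\A))$ follows from Theorem~\ref{thm:tc model}, part~\ref{tcm2}, and the inclusion $\RR^i(A(\A))\subseteq\RR^i(M(\A))$ from Theorem~\ref{thm:mpps-bis}, part~\ref{r2}.

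The one genuinely new point---and the step I expect to require the most care---is verifying that $A(\A)$ has positive weights in the sense of \S\ref{eq:poswt}; this is the only place where the specific structure of the elliptic model enters, since, unlike in the Gysin case, it is not handed to us by a general theorem. Mimicking the Gysin weight $p+2q$, I would assign weight~$1$ to each generator $a_j,b_j$ coming from $H^1(E)$ and weight~$2$ to each $e_i$, then extend multiplicatively. Conditions~(1) and~(2) are then immediate: the grading descends to the quotient because both the Orlik--Solomon relations and the relations \eqref{eq:brels} are weight-homogeneous, and every degree-one generator has strictly positive weight, so $A^1_\alpha=0$ for $\alpha\le 0$. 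For condition~(3), multiplicativity of the weight is built into the definition on generators, while compatibility with the differential reduces to the relation \eqref{eq:bdiff}: its right-hand side $f_i^*(a)\wedge f_i^*(b)$ is a product of two weight-one classes, hence has weight~$2$, exactly matching the weight of $e_i$. Finally, for the $q$-formal case I would invoke the tangent cone formula of Theorem~\ref{thm:tcone}, which gives $\TC_1(\VV^i(M(\A)))=\RR^i(M(\A))$ for $i\le q$; combined with the middle equality already established, this upgrades the inclusion to an equality in that range.
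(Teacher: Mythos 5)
Your proposal is correct and follows essentially the same route as the paper: the paper's proof simply notes that $A(\A)$ is finite-dimensional with positive weights (weight $1$ on $a_i,b_i$, weight $2$ on $e_i$) and then says the argument of Theorem~\ref{thm:tcone qp} goes through with the Bibby model in place of the Gysin model, which is exactly the three-step chain you spell out. Your explicit check that the relations \eqref{eq:osrels}, \eqref{eq:brels} and the differential \eqref{eq:bdiff} are weight-homogeneous is a correct (and slightly more detailed) verification of the one point the paper asserts without proof.
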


\begin{proof}
\smartqed
The $\cdga$ model $(A(\A),\D)$ is finite-dimensional, since the underlying 
graded algebra $A(\A)$ is a quotient of a finitely-generated exterior algebra. 
Furthermore, this model has positive weights: simply assign 
weight $1$ to the generators $a_i, b_i$ and weight $2$ to the generators 
$e_i$.  

Using now Theorem \ref{thm:bibby}, the rest of the argument 
from Theorem \ref{thm:tcone qp} goes through, once we replace the Gysin 
model $A(M(\A))$ with the Bibby model $A(\A)$. 
\qed
\end{proof}

As a consequence, each resonance variety $\RR^i(A(\A))$ is a union of rationally 
defined linear subspaces.   As we shall see in \S\ref{subsec:ft}, that's not always  
true for the resonance variety $\RR^i(M(\A))$, in which case the last inclusion 
from \eqref{eq:tcone ell} fails to be an equality, and $M(\A)$ fails to be $i$-formal. 

It is worth noting that the Orlik--Solomon-type 
relations for the model $A(\A)$ are combinatorially determined, 
yet the relations \eqref{eq:brels} depend on the actual defining 
equations for the arrangement.  This observation leads to the 
following natural question.

\begin{prb}
\label{prob:combres}
Let $\A$ be a unimodular elliptic arrangement.  
Are the resonance varieties $\RR^i(A(\A))$ and 
$\RR^i(M(\A))$ determined by the intersection lattice of $\A$?  
Furthermore, is there a combinatorial criterion to decide whether 
the two varieties coincide, and, if so, whether 
the complement $M(\A)$ is formal?
\end{prb}

\subsection{Ordered configurations on an elliptic curve}
\label{subsec:ft}
The configuration space of $n$ points on an elliptic curve, 
$\Conf(E,n)$, is the complement of the elliptic braid arrangement, 
which is the arrangement in $E^{\times n}$ defined by the equations 
$z_i=z_j$ for $1\le i<j\le n$.  This space is a $K(\pi,1)$, 
with $\pi=PE_n$, the elliptic pure braid group on $n$ strings.

The resonance varieties $\RR^1(\Conf(E,n))$ were computed 
in \cite{DPS-duke}, while the positive-dimensional components 
of  $\VV^1(\Conf(E,n))$ were computed by Dimca in \cite{Di10}.  
An alternate way to perform this computation is to use work 
of Feler, namely, \cite[Theorem 3.1]{Fe}.  

Since $E=S^1\times S^1$ is a topological group, the  
space $\Conf(E,n)$ splits up to homeomorphism as a direct product, 
$\Conf(E^*, n-1)\times E$, where $E^*$ 
denotes the elliptic curve $E$ with the identity removed.  
Thus, for our purposes here it is enough to consider the configuration 
spaces $\Conf(E^*, n-1)$. In the next example, we work out in 
detail the case when $n=3$.  

\begin{exm}
\label{ex:conf torus}
Let $X=\Conf(E^*,2)$ be the configuration space of $2$ 
labeled points on a punctured elliptic curve.  This 
is the complement of the arrangement $\A$ in $E^{\times 2}$ 
defined by the polynomial $f=z_1z_2(z_1-z_2)$.  

By Theorem \ref{thm:bibby}, the space $X$ admits 
as a model the $\cdga$ $(A,\D)$, 
where $A$ is the exterior algebra on generators $a_1,b_1,a_2,b_2, 
e_1,e_2,e_3$ in degree $1$, modulo the ideal generated by 
the quadrics 
\[
a_1e_1,b_1e_1,a_2e_2,b_2e_2,(a_1-a_2)e_3, (b_1-b_2)e_3, 
(e_1-e_2)(e_1-e_3),
\]
while the differential $\D\colon A\to A$ is given by 
$\D{a}_1=\D{b}_1 =\D{a}_2=\D{b}_2=0$ and 
\[
\D{e}_1=a_1b_1, \  
\D{e}_2=a_2b_2, \
\D{e}_3=(a_1-a_2)(b_1-b_2).
\]

Identify $H^1(A)=\C^4$, with basis the classes represented by 
$a_1,b_1,a_2,b_2$, and let $S=\C[x_1,y_1,x_2,y_2]$ be the 
corresponding polynomial ring. Fixing bases as above 
for $A^1=\C^7$ and 
$\{a_1b_1, a_1a_2, a_1b_2,b_1a_2,b_1b_2,a_2b_2, 
a_2e_1, b_2e_1$, $a_1e_2, b_1e_2, a_1e_3, b_1e_3,
e_1e_2,e_1e_3\}$ 
for $A^2=\C^{14}$, we find that the boundary maps  
for the chain complex $A_{\hdot}\otimes S$ 
are given by 
\[
\setcounter{MaxMatrixCols}{14}
\arraycolsep=2pt
\partial_2=\begin{pmatrix}
-y_{1}&-x_{2}&-y_{2}&0&0&0&0&0&0&0&0&0&0&0\\
x_{1}&0&0&-x_{2}&-y_{2}&0&0&0&0&0&0&0&0&0\\
0&x_{1}&0&y_{1}&0&-y_{2}&0&0&0&0&0&0&0&0\\
0&0&x_{1}&0&y_{1}&x_{2}&0&0&0&0&0&0&0&0\\
1&0&0&0&0&0&{x_{2}}&{y_{2}}&0&0&0&0&0&0\\
0&0&0&0&0&1&0&0&{x_{1}}&{y_{1}}&0&0&0&0\\
1&0&{-1}&1&0&1&0&0&0&0&x_{1}+x_{2}&y_{1}+y_{2}&0&0
\end{pmatrix}
\]
and $\partial_1=\begin{pmatrix} x_1 &y_1& x_2 &y_2 &0& 0& 0\end{pmatrix}$. 
Computing homology, we find that $H_1(A_{\hdot}\otimes S)$ 
is presented by the $S$-linear map $\varphi\colon S^7\to S^3$ with matrix
\[
\arraycolsep=5pt
\varphi= \begin{pmatrix}
y_2& x_2& y_2& x_2& -y_2& -x_2\\
y_1& x_1& 0& 0& 0& 0\\
0& 0& y_2& x_2& y_1& x_1
\end{pmatrix}.
\]

By Theorem \ref{thm:res compare}, the resonance variety 
$\RR^1(A)$ is the zero locus of the ideal of $3\times 3$ minors 
of $\varphi$. An easy computation shows 
that this variety is the union of three planes in $\C^4$,  
\[
\RR^1(A)=\{x_1=y_1=0\}\cup \{x_2=y_2=0\}\cup \{x_1+x_2=y_1+y_2=0\}.
\]

On the other hand, the ring $H^{\hdot}(A)\cong H^{\hdot}(X,\C)$ 
is the exterior algebra on generators $a_1,a_2, b_1$, $b_2$ 
in degree $1$, modulo the ideal spanned by  
$a_1 b_2+a_2 b_1$, $a_1 b_1$, and $a_2 b_2$. 
Proceeding as above, we see that 
\[
H_1(H_{\hdot}(A)\otimes S) =\coker 
\arraycolsep=3pt
 \begin{pmatrix}
 y_2& x_2& -y_1& -x_1\\
y_1& x_1& 0& 0\\
0& 0& y_2& x_2
\end{pmatrix}.
\]
Hence, the first resonance variety of $X$ is an 
irreducible quadric hypersurface in $\C^4$, given by 
\[
\RR^1(X)=\{x_1y_2-x_2y_1=0\}.
\]
It follows from Corollary \ref{cor:rational} that the configuration 
space $X=\Conf(E^*,2)$ is not $1$-formal, a result already 
known from \cite{Be94}, \cite{DPS-duke}. 

Turning now to homology with coefficients in rank $1$ local systems, 
direct computation (recorded in \cite[Example 8.2]{Su-pisa}) shows that 
the first characteristic variety of $X$ consists of three 
$2$-dimensional algebraic tori inside $(\C^{*})^4$, 
\[
\VV^1(X)=\{ t_1=s_1=1\} \cup \{t_2=s_2=1\} \cup 
\{ t_1t_2=s_1s_2=1\}.  
\]

As noted in \cite[Proposition 5.1]{Di10}, these three subtori 
arise by pullback along the fibrations $\Conf(E^*,2) \to E^*$ 
obtained by sending a point $(z_1,z_2)$ to $z_2$, $z_1$, 
and $z^{}_1z_2^{-1}$, respectively.  Likewise, according to 
Theorem \ref{thm:r1 gysin}, the three planes 
comprising $\RR^1(A)$ are obtained by pulling back 
the linear space $H^1(A(E^*))=\C^2$ along the same fibrations. 
In particular,  
\[
\tau_1(\VV^1(X))=\TC_1(\VV^1(X))=\RR^1(A), 
\]
as predicted by Theorem \ref{thm:tcone elliptic}.

All three varieties are $2$-dimensional; thus, they are 
all properly contained in the $3$-dimensional 
variety $\RR^1(X)$. Therefore, the Tangent Cone theorem shows, 
once again, that $X$ is not $1$-formal. 
\hfill $\Diamond$
\end{exm}

\begin{acknowledgement}
I wish to thank Stefan Papadima for several useful conversations 
regarding this work, and also the referee, for a careful reading of 
the manuscript.
\end{acknowledgement}

\end{document}